\def \C {\mathbb{C}}
\def \R {\mathbb{R}}
\def \H {\mathbb{H}}
\def \N {\mathbb{N}}
\def \bP {\overline{P}^{\prime}}
\def \bQ {\overline{Q}^{\prime}}
\def \mn {\mu^{(N)}}
\newtheorem{theorem}{Theorem}[section]
\newtheorem{corollary}[theorem]{Corollary}
\newtheorem{definition}[theorem]{Definition}
\newtheorem{lemma}[theorem]{Lemma}
\newtheorem{proposition}[theorem]{Proposition}
\begin{document}
\title{Prescribing the $\overline{Q}^{\prime}$-Curvature on Pseudo-Einstein CR 3-Manifolds}

\author{Ali Maalaoui$^{(1)}$}
\addtocounter{footnote}{1}
\footnotetext{Department of mathematics and natural sciences, American University of Ras Al Khaimah, PO Box 10021, Ras Al Khaimah, UAE. E-mail address:
{\tt{ali.maalaoui@aurak.ac.ae}}}

\maketitle
\vspace{5mm}

{\noindent\bf Abstract} {\small In this paper we study the problem of prescribing the $\bQ$-curvature on pseudo-Einstein CR 3-manifolds. In the first stage we study the problem in the compact setting and we show that under natural assumptions, one can prescribe any positive CR pluriharmonic function. In the second stage we study the problem in the non-compact setting of the Heisenberg group. Under mild assumptions on the prescribed function, we prove the existence of a one parameter family of solutions. In fact, we show that one can find two kinds of solutions: normal ones that satisfy an isoperimetric inequality and non-normal ones that have a biharmonic leading term.}
\vspace{8mm}

\noindent
{\small Keywords: Pseudo-Einstein manifolds, $Q^{\prime}$-curvature, Statistical mechanics}

\vspace{4mm}

\noindent
{\small 2010 MSC. Primary: 32V20, 32V05.  Secondary: 82B05}

\section{Introduction and Main results}
The $Q^{\prime}$-curvature and the $P^{\prime}$-operator play an important role in the study of the geometry of three dimensional manifolds. In fact, the pair $(Q^{\prime},P^{\prime})$ is the parallel of the pair $(Q,P_{4})$ for 4-dimensional conformal manifolds. Indeed, from the correspondence between conformal and CR geometry induced by the Fefferman metric \cite{Feff}, one can construct a pair $(Q,P_{\theta})$ such that under a conformal change of the contact form $\theta \to \hat{\theta}=e^{2u}\theta$, one has
$$P_{\theta}u+Q_{\theta}=Q_{\hat{\theta}}e^{4u}$$
where the Paneitz operator $P_{\theta}=(\Delta_{b})^{2}+T^{2}+lot$. Unfortunately, this construction has two issues. The first one is from an analytical point of view, since the operator $P_{\theta}$ has a huge kernel containing the space of CR pluriharmonic functions $\mathcal{P}$ and its fundamental solution has a leading term of $(\ln|xy^{-1}|)^{2}$ (with $M$ seen as locally diffeomerphic to the Heisenberg group $\H^{1}$). The second issue is that the total $Q$-curvature is always zero \cite{Hir2}, hence it does not provide any extra geometric information compared to the case of the 4-dimensional conformal manifolds where one has
$$\int_{M}Qdv_{g}+\frac{1}{8}\int_{M}|W_{g}|^{2}dv_{g}=4\pi^{2}\chi(M).$$
In \cite{Bran}, the authors, provide a substitute pair, in odd dimensional spheres $(P^{\prime},Q^{\prime})$ where $P^{\prime}$ is a Paneitz type operator in order to prove a sharp Onofri inequality in the CR setting. In dimension $3$, the $P^{\prime}$-operator satisfies $P^{\prime}=4(\Delta_{b})^{2}+lot$ and is defined on the space of pluriharmonic functions and the $Q^{\prime}$-curvature is defined implicitely so that
$$P^{\prime}_{\theta}u+Q^{\prime}_{\theta}-\frac{1}{2}P_{\theta}(u^{2})=Q^{\prime}_{\hat{\theta}}e^{2u},$$
This can be also stated as 
\begin{equation}\label{eq1}
P^{\prime}_{\theta}u+Q^{\prime}_{\theta}=Q^{\prime}_{\hat{\theta}}e^{2u}\text{ mod }\mathcal{P}^{\perp}.
\end{equation}
This was extended in \cite{CaYa} to the case of pseudo-Einstein three dimensional CR manifolds. Contrary to the $Q$-curvature, the total $Q^{\prime}$-curvature is not always zero and it is invariant under the conformal change of the contact structure. In fact, it is proportional to the Burns-Epstein invariant $\mu(M)$  (see \cite{BE} when $T^{1,0}$ is trivial then extended in \cite{CL} ). In particular, as shown in \cite{CaYa}, if $(M,J)$ is the boundary of a strictly pseudo-convex domain $X$, then
$$\int_{M}Q^{\prime}\theta\wedge d\theta=16\pi^{2}\Big(\chi(X)-\int_{X}(c_{2}-\frac{1}{3}c_{1}^{2})\Big),$$
where $c_{1}$ and $c_{2}$ are the first and second Chern forms of the K\"{a}hler-Einstein metric on $X$ obtained by solving Fefferman's equation.\\

Because of the issue of solving orthogonally to the infinite dimensional space $\mathcal{P}^{\perp}$, Case, Hsiao and Yang \cite{CaYa1}, studied another quantity that has similar properties to the $Q^{\prime}$-curvature and that comes from the projection of equation $(\ref{eq1})$ on to the space $\mathcal{P}$. In fact, the $P^{\prime}$-operator as defined in \cite{Bran}, is only defined after projection on $\mathcal{P}$, but in \cite{CaYa1}, the authors show extra analytical properties of this projected operator. Indeed, if we let $\Gamma: L^{2}\to \mathcal{P}$ be the orthogonal projection and we let $\bP=\Gamma\circ P^{\prime}$, then in \cite{CaYa1}, the authors study the equation 
$$\bP u+\bQ=\lambda e^{2u} \text{ mod } \mathcal{P}^{\perp}.$$
The quantity $\bQ$ is the projection of $Q^{\prime}$ on $\mathcal{P}$, that is, $\bQ=\Gamma \circ Q^{\prime}$.\\
In this paper we continue the study of the problem of prescribing the $\bQ$-curvature, under conformal change of the contact structure on pseudo-Einstein CR manifolds. Namely, given a function $Q\in \mathcal{P}$, we want to solve the problem
\begin{equation}\label{eq2}
\bP u+\bQ=Qe^{2u} \text{ mod } \mathcal{P}^{\perp}.
\end{equation}
Naturally, this is equivalent to 
$$\bP u+\bQ =\Gamma(Qe^{2u}).$$
Notice that if $u$ solves $(\ref{eq2})$, then for $\tilde{\theta}=e^{u}\theta$, one has $\bQ_{\tilde{\theta}}=Q$. Ineed, one needs to make clear distinctions between the different projections. That is, $\bQ$ is the orthogonal projection of $Q^{\prime}$ on $\mathcal{P}$ with respect to the $L^{2}$-inner product induced by $\theta$, while $\bQ_{\tilde{\theta}}$ is the orthogonal projection of $Q^{\prime}_{\tilde{\theta}}$ with respect to the $L^{2}$-inner product induced by $\tilde{\theta}$. In particular $\phi\in \mathcal{P}_{\theta}$ if and only if $\phi \in \mathcal{P}_{\tilde{\theta}}$ and $\psi \in \mathcal{P}^{\perp}_{\theta}$ if and only if $e^{-2u}\psi \in \mathcal{P}^{\perp}_{\tilde{\theta}}$. So if we write $\Gamma_{u}$ the orthogonal projection induced by $\tilde{\theta}$, we have $\Gamma_{u}(Q^{\prime}_{\tilde{\theta}})=Q$.

Our main result can be formulated as follows:

\begin{theorem}\label{mthm}
Let $M$ be a three dimensional compact pseudo-Einstein manifold such that $\bP$ is positive and $\ker \bP =\R$ . Consider $Q\in C^{\infty}(M)$ such that $Q>0$ and assume that $\int_{M}\overline{Q}'dv<16\pi^{2}$, then there exists $u\in \mathcal{P}$ such that  $$P'u+Q'= Qe^{2u}\text{ mod } \mathcal{P}^{\perp}.$$
In particular, the contact form $\hat{\theta}=e^{u}\theta$ satisfies $\bQ_{\hat{\theta}}=\Gamma_{u} \circ Q$.
\end{theorem}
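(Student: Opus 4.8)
The plan is to solve \eqref{eq2} by direct minimization, in the spirit of Chang--Yang's treatment of the prescribed $Q$-curvature problem on $4$-manifolds. Put $k:=\int_M\bQ\,dv$; since the orthogonal projection $\Gamma$ fixes constants we have $k=\int_M Q'\,dv$, and integrating \eqref{eq2} against $1$ (using that $\bP$ is self-adjoint and $1\in\ker\bP$, so $\int_M\bP u\,dv=0$) shows that a solution can exist only if $k=\int_M Qe^{2u}\,dv>0$; we therefore work under $0<k<16\pi^2$. Let $H$ be the completion of $\mathcal P\cap C^\infty(M)$ for the norm $\|u\|_H^2=\langle\bP u,u\rangle_{L^2}+\|u\|_{L^2}^2$, and on $H$ consider
\[
II(u)=\tfrac12\langle\bP u,u\rangle_{L^2}+\int_M\bQ\,u\,dv-\tfrac{k}{2}\log\!\Big(\int_M Qe^{2u}\,dv\Big).
\]
A critical point $u$ of $II$ satisfies $\langle\bP u,\phi\rangle_{L^2}+\langle\bQ,\phi\rangle_{L^2}=\lambda\langle Qe^{2u},\phi\rangle_{L^2}$ for all $\phi\in\mathcal P$, where $\lambda=k/\!\int_M Qe^{2u}\,dv>0$; since $\bP u,\bQ,\Gamma(Qe^{2u})\in\mathcal P$ this means $\bP u+\bQ=\lambda\,\Gamma(Qe^{2u})$, i.e.\ $P'u+Q'=\lambda Qe^{2u}$ modulo $\mathcal P^{\perp}$. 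Because $II(u+c)=II(u)$ for every constant $c$ (the linear term increases by $ck$ and $-\tfrac k2\log(\cdot)$ decreases by $ck$), replacing $u$ by $u+\tfrac12\log\lambda$ absorbs $\lambda$ and produces a genuine solution of \eqref{eq2}; the claim about $\hat\theta=e^{u}\theta$ is then immediate from the discussion preceding the theorem.

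The analytic core is a sharp Adams--Moser--Trudinger (Beckner--Onofri type) inequality for $\bP$ on $\mathcal P$: there is $C=C(M,\theta)$ with
\[
\log\!\Big(\int_M e^{2(u-\bar u)}\,dv\Big)\le\frac{1}{16\pi^2}\langle\bP u,u\rangle_{L^2}+C\qquad(u\in\mathcal P),
\]
$\bar u$ denoting the average of $u$, the optimal constant $16\pi^2$ being the total $Q'$-curvature of the standard CR three-sphere; this inequality, together with the compactness it carries, is available from \cite{Bran,CaYa1}. Writing $u=u_0+\bar u$ with $\int_M u_0\,dv=0$, using $\langle\bP u,u\rangle=\langle\bP u_0,u_0\rangle$ and $\int_M Qe^{2u}\,dv\le(\max_M Q)e^{2\bar u}\int_M e^{2u_0}\,dv$, the terms in $\bar u$ cancel (because $\int_M\bQ\,dv=k$), leaving
\[
II(u)\ \ge\ \Big(\tfrac12-\tfrac{k}{32\pi^2}\Big)\langle\bP u,u\rangle_{L^2}+\int_M\bQ\,u_0\,dv-C'.
\]
Since $\bP\ge0$ has discrete spectrum and $\ker\bP=\R$, there is a spectral gap $\lambda_1>0$ on mean-zero functions, so $\langle\bP u_0,u_0\rangle\ge\lambda_1\|u_0\|_{L^2}^2$ and $|\!\int_M\bQ\,u_0\,dv|\le\delta\langle\bP u_0,u_0\rangle+C_\delta$; choosing $\delta<\tfrac12-\tfrac{k}{32\pi^2}$, which is possible precisely because $k<16\pi^2$, yields $II(u)\ge c\|u_0\|_H^2-C''$. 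By the invariance $II(u+c)=II(u)$, $II$ is thus coercive on the slice $\{u\in H:\bar u=0\}$.

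To conclude, $II$ is weakly lower semicontinuous on $H$: the quadratic term is convex and continuous, $u\mapsto\int_M\bQ\,u\,dv$ is weakly continuous, and if $u_n\rightharpoonup u$ in $H$ then the Moser--Trudinger bound makes $\{e^{2u_n}\}$ equi-integrable while Rellich gives $u_n\to u$ a.e., whence $e^{2u_n}\to e^{2u}$ in $L^1(M)$ and $u\mapsto\log\int_M Qe^{2u}\,dv$ is weakly continuous. Coercivity plus weak lower semicontinuity produce a minimizer $u^*\in\{u\in H:\bar u=0\}$, which (again by constant-invariance) is a critical point of $II$ on $H$, hence a weak solution of $\bP u^*+\bQ=\lambda\,\Gamma(Qe^{2u^*})$; the right-hand side lies in $L^p(M)$ for every $p<\infty$ by Moser--Trudinger, so the subelliptic regularity theory for $\bP$ from \cite{CaYa1} gives $u^*\in C^\infty(M)$, and $u:=u^*+\tfrac12\log\lambda\in\mathcal P$ solves \eqref{eq2}. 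The main obstacle is exactly the sharp Moser--Trudinger inequality for $\bP$: its optimal constant $16\pi^2$ is what converts the hypothesis $\int_M\bQ\,dv<16\pi^2$ into strict coercivity, and its companion compactness is what gives weak continuity of the exponential term; a secondary, more technical difficulty is that $\bP=\Gamma\circ P'$ is nonlocal, so one must be careful with the set-up of $H$, the self-adjointness and spectral gap (where $\ker\bP=\R$ enters), the meaning of weak solutions modulo $\mathcal P^{\perp}$, and the regularity bootstrap, all of which rest on \cite{CaYa1}.
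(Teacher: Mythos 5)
Your argument is correct in outline, but it is a genuinely different proof from the one in the paper. The paper deduces Theorem \ref{mthm} from Theorem \ref{pthm}, which is proved by the Messer--Spohn/Kiessling statistical-mechanics method \cite{MS,Kei}: one forms the canonical ensemble $\mu^{(N)}$ of $N$ particles interacting through the modified Green's function $U$ of $\bP$, checks that the partition function and the marginals are uniformly controlled in $L^{p}$ precisely when $\beta\gamma<8$ (an elementary computation once one knows $U(x,y)=-\gamma\ln|xy^{-1}|+O(1)$ with $\gamma=\frac{1}{4\pi^{2}}$), passes to the $N\to\infty$ limit via sub-additivity of the free energy and the Hewitt--Savage decomposition \cite{HS}, identifies the limiting one-point densities as solutions of the mean-field fixed-point equation, and finally sets $u=\frac{\beta}{2}\int_{M}U(\cdot,y)\rho_{\beta}(y)\,dy+c$ with $\beta=2\int_{M}\bQ\,dv$. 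You instead minimize the Chang--Yang-type functional $II$ on $\mathcal{P}$, which is essentially the Case--Hsiao--Yang strategy for the extremal (constant $\bQ$) problem adapted to a prescribed positive $Q$, close in spirit to \cite{QQ}. Each route buys something: yours is shorter and variationally standard, but its entire weight rests on the sharp Adams--Moser--Trudinger inequality for $\bP$ with constant $\frac{1}{16\pi^{2}}$, and the attribution there needs care --- \cite{Bran} proves it only on the CR sphere, and \cite{CaYa1} proves it under the hypotheses ``$P_{\theta}\ge 0$ and positive CR Yamabe constant'', which imply, but are not implied by, the hypotheses ``$\bP>0$, $\ker\bP=\R$'' of the theorem; to run your proof verbatim under the stated hypotheses you would have to rederive the sharp inequality from the Green's function expansion $G(x,y)=-\frac{1}{4\pi^{2}}\ln|xy^{-1}|+\mathcal{K}(x,y)$ of \cite{CCY2} by a Fontana-type argument, together with the mapping properties of $\Gamma$ (embeddability, \cite{Chan}) that you also use for Rellich compactness and the regularity bootstrap. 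The paper's method needs only non-sharp exponential integrability of the logarithmic potential below the threshold, and it yields the whole one-parameter family $\rho_{\beta}$, $\beta\gamma\in[0,8)$, of which the prescribed-curvature solution is the single member $\beta=2\int_{M}\bQ\,dv$. Note finally that both arguments implicitly require $\int_{M}\bQ\,dv>0$ (you flag this when you restrict to $0<k<16\pi^{2}$; the paper needs it so that its choice of $\beta$ lies in $(0,\frac{8}{\gamma})$), which is anyway a necessary condition for solvability since $Q>0$.
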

We recall that in \cite{CaYa}, the authors show that the non-negativity of the Paneitz operator $P_{\theta}$ and the positivity of the CR-Yamabe invariant imply that $\bP$ is non-negative and $\ker \bP=\R$. Moreover, $\int_{M}Q^{\prime}dv=\int_{M}\bQ dv\leq 16\pi^{2}$ with equality if an only if $(M,J,\theta)$ is the standard sphere. In fact, the previously stated assumptions have very strong geometric implications, namely, they imply that the $(M,J,\theta)$ is embeddable as proved in \cite{Chan}. We also point out some similarities between our result and the work in \cite{QQ}.\\

Our strategy follows an idea from statistical mechanics introduced by Messer and Spohn \cite{MS}, then extended to logarithmic potentials by Kiessling in \cite{Kei}. This method was used in the problem of prescribing the scalar curvature in \cite{CK} and then the problem of prescribing the $Q$-curvature with conical singularities in \cite{Ma}. This will be introduced in Section 2.2. In fact, Theorem \ref{mthm}, will be a direct corollary from the more general result stated in Theorem \ref{pthm}.\\

In section 4, we  consider the case of the Heisenberg group. Since the space is not compact, we will be assuming the following:
given a function $K\in \ker P^{\prime}\cap \ker P$ and $Q\in C^{\infty}(\H)$ such that
\begin{itemize}
\item[a)] For all $0<q<4$, we have $\int_{B_{1}(x)}\frac{Q(y)e^{2K(y)}}{|xy^{-1}|^{q}}dy\to 0$ as $x\to \infty$.
\item[b)]There exists $s\geq 0$ such that $\int_{\H}Q(x)e^{2K(x)}|x|^{s}dx<\infty.$
\end{itemize} 
Then we have the following result
\begin{theorem}\label{thmh}
If $Q\in C^{\infty}(\H)$ satisfies $a)$ and $b)$, then there exists a one parameter family $u_{\beta}$, with $\beta\in (0,8)$, of solutions to
$$4(\Delta_{b})^{2}u=Q(x)e^{2u} \text{ mod } \mathcal{P}^{\perp},$$
with $u(x)=\frac{1}{2}K(x)-\frac{\beta \gamma}{2}\ln|x|+o(\ln|x|)$.
\end{theorem}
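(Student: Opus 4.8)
The plan is to exploit the statistical-mechanics variational scheme of Section 2.2 on the non-compact Heisenberg group by first reducing to a fixed-mass (``microcanonical'') problem and then passing to the limit. Concretely, I would fix $\beta\in(0,8)$ and look for a probability density $\rho_\beta$ on $\H$ that minimizes the free-energy functional
\begin{equation}\label{eq:freeenergy}
F_\beta(\rho)=\int_\H \rho\ln\rho\,dx-\frac{\beta}{2}\iint_{\H\times\H}\rho(x)\rho(y)\,G(x,y)\,dx\,dy,
\end{equation}
where $G(x,y)$ is the (positive part of the) fundamental solution of $4(\Delta_b)^2$ on $\H$, whose leading behaviour is $-\gamma\ln|xy^{-1}|$ with the same constant $\gamma$ appearing in the statement. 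The Euler--Lagrange equation for a critical point of $F_\beta$ under the constraint $\int_\H\rho=1$ is $\rho_\beta=Z^{-1}Q\,e^{2K}e^{\beta\int G(\cdot,y)\rho_\beta(y)dy}$, and then $u:=\frac12 K+\frac{\beta}{2}\int G(\cdot,y)\rho_\beta(y)\,dy+\text{const}$ solves $4(\Delta_b)^2 u=Q e^{2u}\ \mathrm{mod}\ \mathcal P^\perp$, the constant being absorbed by the scaling $\theta\mapsto c\theta$; note that because $K\in\ker P'\cap\ker P$ it contributes nothing to the left-hand side and only enters through the weight $Qe^{2K}$. The one-parameter family is then $u_\beta$, and the range $\beta\in(0,8)$ is exactly the subcritical window dictated by the sharp constant $16\pi^2$ (since $8\cdot\text{something}=16\pi^2$ once $\gamma$ is normalized), mirroring the dichotomy $\int \overline Q'<16\pi^2$ in the compact case.

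The key steps, in order, are: (i) construct $G$ and record its precise asymptotics, in particular that $G(x,y)=-\gamma\ln|xy^{-1}|+(\text{bounded, decaying error})$, together with a Moser--Trudinger / Adams-type inequality on $\H$ with the correct sharp constant — this is where hypotheses a) and b) are used, a) guaranteeing that the potential $\int G(\cdot,y)\rho(y)dy$ has no local concentration blow-up (the $|xy^{-1}|^{-q}$ integrability kills the singular contributions), and b) providing a moment bound $\int Qe^{2K}|x|^s<\infty$ that prevents mass from escaping to infinity; (ii) show that $F_\beta$ is bounded below and coercive on the constraint set for $\beta<8$, using the logarithmic Hardy--Littlewood--Sobolev / Onofri duality on $\H$; (iii) extract a minimizing sequence and prove it is tight — here the weight $|x|^s$ from b) is combined with the strict subcriticality to rule out vanishing and dichotomy à la concentration-compactness; (iv) verify the minimizer is a genuine critical point (no Lagrange-multiplier degeneracy) and bootstrap elliptic regularity for $(\Delta_b)^2$ to get $u_\beta\in C^\infty$; (v) read off the asymptotic expansion $u_\beta(x)=\frac12 K(x)-\frac{\beta\gamma}{2}\ln|x|+o(\ln|x|)$ directly from the potential representation together with the asymptotics of $G$ and the fact that $\int\rho_\beta=1$. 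The distinction between ``normal'' and ``non-normal'' solutions advertised in the abstract will come from whether the $o(\ln|x|)$ remainder is itself bounded (normal, isoperimetric) or grows (non-normal, with a genuine biharmonic leading term); for the present theorem only the leading $-\frac{\beta\gamma}{2}\ln|x|$ asymptotic is asserted, so step (v) suffices.

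The main obstacle I expect is step (iii), the tightness/compactness of the minimizing sequence on the non-compact group: unlike the compact case of Theorem \ref{mthm}, mass can now leak to infinity, and the fourth-order kernel $G$ — which is not sign-definite and whose positive part only \emph{approximately} equals $-\gamma\ln|xy^{-1}|$ — makes the usual rearrangement and symmetrization arguments unavailable. The hypotheses a) and b) are tailored precisely to close this gap: b) forces the external weight $Qe^{2K}$ to decay fast enough that concentrating mass near infinity is energetically penalized, while a) is the local analogue ensuring the self-interaction term in \eqref{eq:freeenergy} does not develop a Dirac mass at any finite point. A secondary technical nuisance is handling the ``$\mathrm{mod}\ \mathcal P^\perp$'': one must check that projecting onto $\mathcal P$ commutes with the constructions above, i.e.\ that $\Gamma\circ(Qe^{2u})$ can be realized as $\bP u+\bQ$ with $u\in\mathcal P$, which on $\H$ amounts to observing that $4(\Delta_b)^2$ already maps into $\mathcal P^\perp$ on the relevant function classes, so the projection is essentially free. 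Once compactness is in hand, steps (iv) and (v) are routine: regularity follows from the hypoellipticity of $(\Delta_b)^2$ and the subelliptic estimates of Folland--Stein, and the asymptotic expansion is a direct computation with the kernel.
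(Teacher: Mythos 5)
Your endgame coincides with the paper's: obtain a probability density $\rho_\beta$ solving the mean-field fixed-point equation and then set $u_\beta=\tfrac12K+\tfrac{\beta}{2}\int U(\cdot,y)\rho_\beta(y)\,dy+c$, using $K\in\ker P'\cap\ker P$ and the logarithmic kernel to read off the $-\tfrac{\beta\gamma}{2}\ln|x|$ asymptotics. But the route you propose to get $\rho_\beta$ --- direct minimization of the limiting free energy --- is not what the paper does, and as written it has a genuine gap exactly at the step you yourself flag as the obstacle. The paper never attacks the limiting functional head-on: it runs the Messer--Spohn/Kiessling scheme, i.e.\ it studies the $N$-particle Gibbs measures $\mu^{(N)}$ built from the weighted measure $\tau(dx)=Q(x)e^{2K(x)}dx$, proves uniform $L^p$ bounds for the marginals $\mu^{(N)}_n$ (the hard point being an $N$-independent bound on ratios such as $\mathcal{M}^{(N-1)}(\beta')/\mathcal{M}^{(N)}(\beta)$, obtained by regularizing the log-potential and using Gaussian functional integration), proves tightness of $\mu^{(N)}_1$ via the growth of the potential $h(y)=\int\ln|yx^{-1}|\,\mu^{(1)}(dx)$, and then uses the Hewitt--Savage decomposition together with upper semicontinuity of the entropy to conclude that limit points concentrate on maximizers of $\mathcal{F}^{(1)}_\beta$. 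In your plan, steps (ii)--(iii) (boundedness below/coercivity and tightness of a minimizing sequence for $\beta\in(0,8)$) are precisely the content that this machinery replaces, and you only assert them, appealing to a sharp log-HLS/Onofri inequality ``on $\H$'' in a weighted, non-compact form that is not available off the shelf (the Branson--Fontana--Morpurgo inequality lives on the CR sphere); without a proof of these two steps there is no existence of $\rho_\beta$ and hence no $u_\beta$.

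Two further points need repair even at the formal level. First, your functional $F_\beta(\rho)=\int\rho\ln\rho-\tfrac{\beta}{2}\iint\rho\rho\,G$ uses entropy relative to Lebesgue measure, so its Euler--Lagrange equation is $\rho_\beta=Z^{-1}e^{\beta\int G\rho_\beta}$, not the claimed $\rho_\beta=Z^{-1}Qe^{2K}e^{\beta\int G\rho_\beta}$; you must take the entropy relative to $\tau=Qe^{2K}dx$ (as the paper does through $\mu^{(1)}$), otherwise the prescribed function $Q$ and the biharmonic term $K$ never enter the equation and the final identification $4(\Delta_b)^2u=Qe^{2u}$ mod $\mathcal{P}^\perp$ fails. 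Second, your concern that $G$ is only ``approximately'' logarithmic and not sign-definite is misplaced on the Heisenberg group: the paper quotes the explicit formula $G(x,y)=-\tfrac{1}{4\pi^2}\ln|xy^{-1}|$ with $\bP G(\cdot,y)=\mathrm{Re}\,S(\cdot,y)$ ($S$ the Szeg\H{o} kernel), and it is exactly this identity that makes the ``mod $\mathcal{P}^\perp$'' bookkeeping work when one applies $P'=4(\Delta_b)^2$ to $u_\beta$; your parenthetical explanation of why the projection is ``essentially free'' should be replaced by this computation. With the weighted entropy and a genuine proof of compactness (or by adopting the paper's particle-ensemble estimates, Lemmas 4.2--4.4 and the tightness lemma), the rest of your outline, including the asymptotic expansion in step (v), does go through.
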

We recall that the contact form $e^{u}\theta_{0}$ is said to be normal, (see \cite{WY}), if
$$u(x)=\gamma \int_{\H}\ln\frac{|y|}{|xy^{-1}|}Q(y)e^{2u(y)}dy+C,$$
where $C$ is a constant. In particular, if $K$ is not constant in the above theorem, then $e^{u}\theta_{0}$ is not normal. Hence, Theorem \ref{thmh} provides us with a families of non-normal contact forms. On the other hand, a direct consequence of the result in \cite{WY}, is
\begin{corollary}\label{cor}
Under the same assumptions as in Theorem \ref{mthm}, taking $K$ to be constant, the one parameter family $u_{\beta}$ gives rise to contact forms $\theta_{\beta}=e^{\beta u}\theta_{0}$, satisfying the isoperimetric inequality, where $\theta_{0}$ is the standard contact form on $\H$. That is for any bounded domain $\Omega$ with smooth boundary
$$Vol_{\theta_{\beta}}(\Omega)\leq C_{\beta}Area_{\theta_{\beta}}(\partial \Omega)^{\frac{4}{3}},$$
where $C_{\beta}$ depends on $Q$ and $\beta$.
\end{corollary}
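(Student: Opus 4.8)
The plan is to verify that, once $K$ is taken constant, the contact forms $\theta_{\beta}$ furnished by Theorem \ref{thmh} satisfy the hypotheses of the isoperimetric theorem of \cite{WY}; the corollary then follows at once. The first and central point is \emph{normality}. Decomposing $u_{\beta}$ into a logarithmic potential term and the kernel datum, the construction carried out in the proof of Theorem \ref{thmh} yields
$$u_{\beta}(x)=\gamma\int_{\H}\ln\frac{|y|}{|xy^{-1}|}\,Q(y)e^{2u_{\beta}(y)}\,dy+\frac{1}{2}K(x)+C,$$
so the entire non-normal, biharmonic part of $u_{\beta}$ is carried by $K$. When $K$ is constant the term $\frac{1}{2}K$ absorbs into the additive constant, and the displayed identity is precisely the normality condition for $\theta_{\beta}=e^{u_{\beta}}\theta_{0}$ recalled just before the statement. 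This is the step I would treat with the most care, because for non-constant $K$ the leading term of $u_{\beta}$ is genuinely biharmonic and \cite{WY} no longer applies; this is exactly the dichotomy highlighted after Theorem \ref{thmh}.

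Next I would check the integrability conditions built into \cite{WY}. From hypothesis b) together with the asymptotics $u_{\beta}(x)=\frac{1}{2}K-\frac{\beta\gamma}{2}\ln|x|+o(\ln|x|)$ of Theorem \ref{thmh} one obtains $Qe^{2u_{\beta}}\in L^{1}(\H)$, so the potential integral above converges; hypothesis a) supplies the uniform decay at infinity of the truncated singular part of that potential. Comparing the coefficient of $\ln|x|$ in the asymptotics with the one produced by the potential, the total mass $\int_{\H}Qe^{2u_{\beta}}\,dx$ is a fixed positive multiple of $\beta$; hence for $\beta\in(0,8)$ it is finite and stays strictly below the critical value $16\pi^{2}$ which is the obstruction both in the compact theory and in \cite{WY}.

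With normality together with a finite and subcritical total mass in hand, the main theorem of \cite{WY} applies to $\theta_{\beta}$ and delivers a constant $C_{\beta}$, depending only on $\int_{\H}Qe^{2u_{\beta}}\,dx$ and hence on $Q$ and $\beta$, such that $Vol_{\theta_{\beta}}(\Omega)\leq C_{\beta}\,Area_{\theta_{\beta}}(\partial\Omega)^{\frac{4}{3}}$ for every bounded $\Omega$ with smooth boundary. The exponent $\frac{4}{3}$ is the natural one, being $\frac{n}{n-1}$ with $n=4$ the homogeneous dimension of $\H^{1}$. Once normality, finiteness and subcriticality are secured, nothing beyond invoking \cite{WY} is needed, and normality is the only one of the three items that is not essentially routine.
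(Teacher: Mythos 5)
Your proposal is correct and follows essentially the same route as the paper: the paper also treats the corollary as a direct consequence of the Heisenberg construction, observing that for constant $K$ the solution is normal (the potential representation you display), that the total curvature equals $\frac{\beta}{2\gamma}=2\pi^{2}\beta<16\pi^{2}$ for $\beta\in(0,8)$, and then invoking the isoperimetric/$A_{1}$-weight result of \cite{WY}. Your write-up just makes explicit the normality and subcriticality checks that the paper leaves as remarks.
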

As we will see in Section 4, for $K$ constant, the family of solutions $u_{\beta}$ is normal and has total $\bQ$-curvature equal to $\frac{\beta}{2\gamma}$. Since $\beta<8$, we have that $\int_{\H} Qe^{2u}<16\pi^{2}$, hence, the procedure in \cite{WY} can be applied to show that $e^{2u}$ is an $A_{1}$ weight.
\bigskip

\noindent
{\bf Acknowledgement}
The author wants to express his gratitude to Prof. Paul Yang for the fruitful conversations and insight that helped improve this paper. \\

\section{Preliminaries and Setting}
\subsection{Pseudo-Hermitian geometry}

We will closely follow the notations in \cite{CaYa}. Let $M^3$ be a smooth, oriented three-dimensional manifold.  A CR structure on $M$ is a one-dimensional complex subbundle $T^{1,0}\subset T_{\C}M:= TM\otimes\C$ such that $T^{1,0}\cap T^{0,1}=\{0\}$ for $T^{0,1}:=\overline{T^{1,0}}$.  Let $H=Re T^{1,0}$ and let $J\colon H\to H$ be the almost complex structure defined by $J(Z+\bar Z)=i(Z-\bar Z)$, for all $Z\in T^{1,0}$. The condition that $T^{1,0}\cap T^{0,1}=\{0\}$ is equivalent to the existence of a contact form $\theta$ such that $\ker \theta =H$. We recall that a 1-form $\theta$ is said to be a contact form if $\theta\wedge d\theta$ is a volume form on $M^{3}$. Since $M$ is oriented, a contact form always exists, and is determined up to multiplication by a positive real-valued smooth function. We say that $(M^3,T^{1,0}M)$ is strictly pseudo-convex if the Levi form $d\theta(\cdot,J\cdot)$ on $H\otimes H$ is positive definite for some, and hence any, choice of contact form $\theta$. We shall always assume that our CR manifolds are strictly pseudo-convex.

Notice that in a CR-manifold, there is no canonical choice of the contact form $\theta$. A pseudohermitian manifold is a triple $(M^3,T^{1,0}M,\theta)$ consisting of a CR manifold and a contact form. The Reeb vector field $T$ is the vector field such that $\theta(T)=1$ and $d\theta(T,\cdot)=0$. The choice of $\theta$ induces a natural $L^{2}$-dot product $\langle\cdot,\cdot\rangle$, defined by
$$\langle f,g\rangle =\int_{M}f(x)g(x)\theta\wedge d\theta.$$

 A $(1,0)$-form is a section of $T_{\C}^\ast M$ which annihilates $T^{0,1}$.  An admissible coframe is a non-vanishing $(1,0)$-form $\theta^1$ in an open set $U\subset M$ such that $\theta^1(T)=0$.  Let $\theta^{\bar 1}:=\overline{\theta^1}$ be its conjugate.  Then $d\theta=ih_{1\bar 1}\theta^1\wedge\theta^{\bar 1}$ for some positive function $h_{1\bar 1}$.  The function $h_{1\bar 1}$ is equivalent to the Levi form.  We set $\{Z_1,Z_{\bar 1},T\}$ to the dual of $(\theta^{1},\theta^{\bar 1},\theta)$. The geometric structure of a CR manifold is determined by the  connection form $\omega_1{}^1$ and the torsion form $\tau_1=A_{11}\theta^1$ defined in an admissible coframe $\theta^1$ and is uniquely determined by
\begin{equation*}
\left\{\begin{array}{ll}
d\theta^1 = \theta^1\wedge\omega_1{}^1 + \theta\wedge\tau^1, \\
\omega_{1\bar 1} + \omega_{\bar 11} = dh_{1\bar 1},
\end{array}
\right.
\end{equation*}
where we use $h_{1\bar 1}$ to raise and lower indices.  The connection forms determine the pseudohermitian connection $\nabla$, also called the Tanaka-Webster connection, by
\[ \nabla Z_1 := \omega_1{}^1\otimes Z_1. \] 
The scalar curvature $R$ of $\theta$, also called the Webster curvature, is given by the expression
 \[ d\omega_1{}^1 = R\theta^1\wedge\theta^{\bar 1} \mod\theta . \]
\begin{definition}
A real-valued function $w\in C^\infty(M)$ is CR pluriharmonic if locally $w=Re f$ for some complex-valued function $f\in C^\infty(M,\C)$ satisfying $Z_{\bar 1}f=0$. 
\end{definition}
Equivalently, \cite{Lee}, $w$ is a CR pluriharmonic function if
\[ P_{3} w:=\nabla_1\nabla_1\nabla^1 w + iA_{11}\nabla^1 w = 0 \]
for $\nabla_1:=\nabla_{Z_1}$. We denote by $\mathcal{P}$ the space of all CR pluriharmonic functions. Let $\Gamma: L^{2}(M)\to L^{2}(M)\cap \mathcal{P}$ be the orthogonal projection on the space of pluriharmonic functions. If $S:L^{2}(M)\to \ker \bar\partial_{b}$ denotes the Szego kernel, then 
$$\Gamma=S+\bar S+F,$$
where $F$ is a smoothing kernel as shown in \cite{H}. 
The Paneitz operator $P_\theta$ is the differential operator
\begin{align*}
P_\theta(w) & := 4\text{div}(P_{3}w) \\
& = \Delta_b^2w + T^2 - 4\text{Im} \nabla^1\left(A_{11}\nabla^1f\right)
\end{align*}
for $\Delta_b:=\nabla^1\nabla_1+\nabla^{\bar 1}\nabla_{\bar 1}$ the sublaplacian.  In particular, $\mathcal{P}\subset\ker P_\theta$. Hence, $\ker P_{\theta}$ is infinite dimensional. For a thorough study of the analytical properties of $P_{\theta}$ and its kernel, we refer the reader to \cite{H,CCY,CaYa1}. The main property of the Paneitz operator $P_{\theta}$ is that it is CR covariant \cite{Hir2}. That is, if $\hat\theta=e^w\theta$, then $e^{2w} P_{\hat{\theta}}=P_\theta$.
\begin{definition}
\label{pprime}
Let $(M^3,T^{1,0}M,\theta)$ be a pseudohermitian manifold.  The Paneitz type operator $P_{\theta}^\prime\colon\mathcal{P}\to C^\infty(M)$ is defined by
\begin{align}
P_\theta^\prime f & = 4\Delta_b^2 f - 8 \textnormal{Im}\left(\nabla^\alpha(A_{\alpha\beta}\nabla^\beta f)\right) - 4 \textnormal{Re}\left(\nabla^\alpha(R\nabla_\alpha f)\right) \notag\\
& \quad + \frac{8}{3}\textnormal{Re} (\nabla_\alpha R - i\nabla^\beta A_{\alpha\beta})\nabla^\alpha f - \frac{4}{3}f\nabla^\alpha( \nabla_\alpha R - i\nabla^\beta A_{\alpha\beta})\label{pprime1}
\end{align}
for $f\in\mathcal{P}$.
\end{definition}
The main property of the operator $P_{\theta}^\prime$ is its "almost" conformal covariance as shown in \cite{BG,CaYa}. That is if  $(M^3,T^{1,0}M,\theta)$ is a pseudohermitian manifold, $w\in C^\infty(M)$, and we set $\hat\theta=e^w\theta$, then
\begin{equation}
 \label{pprime}
 e^{2w}\hat P_{\theta}^\prime(u) = P_\theta^\prime(u) + P_\theta\left(uw\right)
\end{equation}
for all $u\in\mathcal{P}$.  In particular, since $P_\theta$ is self-adjoint and $\mathcal{P}\subset \ker P_{\theta}$, we have that the operator $P^\prime$ is conformally covariant, mod $\mathcal{P}^\perp$.
\begin{definition}
A pseudohermitian manifold $(M^3,T^{1,0}M,\theta)$ is pseudo-Einstein if $\nabla_\alpha R - i\nabla^\beta A_{\alpha\beta}=0$.
\end{definition}
 Moreover, if $\theta$ induces a pseudo-Einstein structure then $e^{u}\theta$ is pseudo-Einstein if and only if $u\in \mathcal{P}$. The definition above was stated in \cite{CaYa}, but it was implicitly mentionned in \cite{Hir2}. In particular, if $(M^3,T^{1,0}M,\theta)$ is pseudo-Einstein, then $P_{\theta}^\prime$ takes a simpler form:
$$P_\theta^\prime f = 4\Delta_b^2 f - 8 \text{Im}\left(\nabla^1(A_{11}\nabla^1 f)\right) - 4\text{Re}\left(\nabla^1(R\nabla_1 f)\right).$$
\begin{definition}
 \label{qprime}
 Let $(M^3,T^{1,0}M,\theta)$ be a pseudo-Einstein manifold. The $Q^\prime$-curvature is the scalar quantity defined by
 \begin{equation}
  \label{qprime1}
  Q_\theta^\prime = 2\Delta_b R - 4 |A|^2 + R^2.
 \end{equation}
\end{definition}
The main equation that we will be dealing with is the change of the $Q^{\prime}$-curvature under confrmal change. Let $(M^3,T^{1,0}M,\theta)$ be a pseudo-Einstein manifold, let $w\in\mathcal{P}$, and set $\hat\theta=e^w\theta$.  Hence $\hat\theta$ is pseudo-Einstein. Then \cite{BG,CaYa}
\begin{equation}
 e^{2w} Q_{\hat{\theta}}^\prime = Q_\theta^\prime + P_\theta^\prime(w) + \frac{1}{2}P_\theta\left(w^2\right) .
\end{equation}
In particular, $Q_\theta^\prime$ behaves as the $Q$-curvature for $P_\theta^\prime$, mod $\mathcal{P}^\perp$.
To summarize the similarities between the 3-dimensional pseudo-Einstein manifolds and 4-dimensional Riemannian manifolds, we present the following table:
\begin{center}
\begin{tabular}{ c | c }
Conformal 4- manifold & Pseudo-Einstein 3-manifold \\
\hline \hline
 & \\
$(M^{4},g)$  & $(M^{3},\theta,J)$ \\ 
 & \\
$e^{2u}g$ & $e^{u}\theta$ ; $u$ CR pluriharmonic \\
 & \\
$P_{g}=\Delta_{g}^{2}+\textnormal{div}(\frac{2}{3}R-2Ric)du$ & $P_{\theta}'=4\Delta_{b}^{2}-8\textnormal{Im}(A_{11}u_{\bar{1}})_{\bar{1}}-4\textnormal{Re}(Ru_{1})_{\bar{1}}$\\
 & \\
$Q=-\frac{1}{12}(\Delta R-R^{2}+3|Ric|^{2})$ & $Q'=2\Delta_{b}R-4|A|^{2}+R^{2}$\\
 & \\
$\int_{M}Q_{g}+\frac{1}{8}|W_{g}|^{2}dv_{g}=4\pi^{2}\chi(M)$ & $\int_{M}Q'dv_{\theta}=-\frac{\mu(M)}{16\pi^{2}}$\\
\hline
\end{tabular}
\end{center}

Since we are working modulo $\mathcal{P}^{\perp}$ it is convenient to project the previously defined quantities on $\mathcal{P}$. So we define the operator $\bar P_{\theta}^{\prime}=\Gamma \circ P_{\theta}^{\prime}$ and the $\bar Q^{\prime}$-curvature by $\bar Q^{\prime}_{\theta}=\Gamma( Q^{\prime}_{\theta})$. Notice that
$$\int_{M}Q^{\prime}\theta\wedge d\theta =\int_{M}\overline{Q}^{\prime}_{\theta} \theta\wedge d\theta.$$
Moreover, the operator $\bP_{\theta}$ has many interesting analytical properties. Indeed, $\bP_{\theta}:\mathcal{P}\to \mathcal{P}$ is an elliptic pseudo-differential operator (see \cite{CaYa1}) and if we assume that $\ker \bP_{\theta}=\R$, then its Green's function $G$ satisfies
$$\bP_{\theta}G(\cdot,y)=\Gamma(\cdot,y)-\frac{1}{V},$$
where $V=\int_{M}\theta\wedge d\theta$ is the volume of $M$. Moreover, 
$$G(x,y)=-\frac{1}{4\pi^{2}}\ln(|xy^{-1}|)+\mathcal{K}(x,y),$$
where $\mathcal{K}$ is a bounded kernel as proved in \cite{CCY2}. We want also to clarify the relation between $\bP_{\theta}$ and $\bP_{\hat{\theta}}$ for $\hat{\theta}=e^{u}\theta$. If $\Gamma$ is the $L^{2}$-orthogonal projection, induced by the contact form $\theta$, on $\mathcal{P}$  and $\Gamma_{u}$ the one induced by $\hat{\theta}$, then
$$\bP_{\hat{\theta}}=\Gamma_{u}\circ(e^{-2u}\bP_{\theta}).$$

 From now on we will always assume that $\ker \bP=\R$ and that $\bP$ is non-negative. We will be using a particular solution, $U$, to the problem:
$$\bP U(\cdot,y)=\Gamma(\cdot,y)-\frac{\overline{Q}'}{\int_{M}\overline{Q}'}.$$
One can, then, write $U(x,y)=G(x,y)+H(x)+H(y)$ where $G$ is the Green's function of $\bP$ and $H\in \mathcal{P}$ is the solution to the problem $$\overline{P}'H=\frac{1}{V}-\frac{\overline{Q}'}{\int_{M}\overline{Q}'dx},$$
It is easy to check that, locally, $$U(x,y)=-\gamma \ln|xy^{-1}|+\tilde{\mathcal{H}}(x,y),$$
where $\gamma=\frac{1}{4\pi^{2}}$.

The proof of Theorem \ref{mthm} will be a direct consequence of the following

\begin{theorem}\label{pthm}
We fix a smooth function $Q$ such that $Q(x)>0$ on $M$. For every $\beta\in [0,\frac{8}{\gamma})$, there exist $\rho_{\beta} \in L^{p}(M)$ for all $1\leq p<\infty$, solving the following fixed point problem:
$$\rho_{\beta}(x)=\frac{Q(x)\exp{\left(\beta \int_{M}U(x,y)\rho_{\beta}(y)dy\right)}}{\int_{M}Q(x)\exp{\left(\beta \int_{M}U(x,y)\rho_{\beta}(y)dy\right)}dx}.$$
\end{theorem}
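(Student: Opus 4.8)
The plan is to obtain $\rho_\beta$ from a minimizer of a free-energy functional, following the statistical-mechanics scheme of Messer--Spohn and Kiessling as in \cite{CK,Ma}. The natural free energy on probability densities $\rho$ (with $\int_M\rho\,dx=1$, $\rho\geq 0$) is
\[
\mathcal{F}_\beta(\rho)=\int_M\rho\ln\rho\,dx-\frac{\beta}{2}\int_M\!\int_M U(x,y)\,\rho(x)\rho(y)\,dx\,dy-\int_M\rho\ln Q\,dx ,
\]
whose constrained Euler--Lagrange equation is, after renormalization, exactly the stated fixed point. It is convenient to reformulate in terms of the potential $u$: with $\kappa:=\int_M\bQ\,dx$ and recalling that $U(x,y)=G(x,y)+H(x)+H(y)$ with $G(\cdot,y),H\in\mathcal{P}$ and $\bP\big(\int_M U(\cdot,y)\rho(y)\,dy\big)=\Gamma\rho-\bQ/\kappa$, one checks that $\rho$ solves the fixed-point problem if and only if $\rho=Qe^{2u}/\int_M Qe^{2u}\,dx$ for some $u\in\mathcal{P}$ with
\[
\bP u+\frac{\beta}{2\kappa}\,\bQ=\frac{\beta}{2}\cdot\frac{Qe^{2u}}{\int_M Qe^{2u}\,dx}\ \ \textnormal{mod }\mathcal{P}^\perp ,
\]
which is the Euler--Lagrange equation of
\[
J_\beta(u)=\frac12\langle\bP u,u\rangle+\frac{\beta}{2\kappa}\langle\bQ,u\rangle-\frac{\beta}{4}\ln\!\int_M Qe^{2u}\,dx ,\qquad u\in\mathcal{P}.
\]
The equivalence uses that $\bP\colon\mathcal{P}\to\mathcal{P}$ has $\ker\bP=\R$, so a $\mathcal{P}$-valued identity that holds mod $\mathcal{P}^\perp$ holds exactly; this pins down $2u$ and $\beta\int_M U(\cdot,y)\rho(y)\,dy$ up to the additive constant that drops out under renormalization. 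Since $J_\beta$ is invariant under $u\mapsto u+c$, I would minimize it over $\mathcal{E}:=\{u\in\mathcal{P}:\int_M u\,dx=0\}$.

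The crux is that $J_\beta$ is coercive and bounded below on $\mathcal{E}$ exactly when $\beta<8/\gamma$. Here one invokes the sharp Moser--Trudinger--Onofri inequality for $\bP$ on pseudo-Einstein $3$-manifolds (\cite{Bran,CaYa1}): there is $C$ with $\ln\big(\frac1V\int_M e^{2u}\,dx\big)\leq\frac1{16\pi^2}\langle\bP u,u\rangle+C$ for all $u\in\mathcal{E}$, where $V=\int_M\theta\wedge d\theta$. Feeding this into $J_\beta$ and using that $|\langle\bQ,u\rangle|\lesssim\langle\bP u,u\rangle^{1/2}$ on $\mathcal{E}$ (ellipticity of $\bP$ and the spectral gap from $\ker\bP=\R$ make $\langle\bP u,u\rangle^{1/2}$ a norm equivalent to the Folland--Stein norm), one gets
\[
J_\beta(u)\ \geq\ \Big(\frac12-\frac{\beta}{64\pi^2}\Big)\langle\bP u,u\rangle-c_1\langle\bP u,u\rangle^{1/2}-c_2 ,
\]
so $J_\beta$ is bounded below and coercive precisely because $\tfrac12-\tfrac{\beta}{64\pi^2}>0$, i.e.\ $\beta<8/\gamma=32\pi^2$; heuristically this is the balance, at the threshold mass, between the entropy and the concentration of the logarithmic interaction in $\mathcal{F}_\beta$.

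With coercivity in place the direct method applies. A minimizing sequence $u_n\in\mathcal{E}$ is bounded in the energy space; passing to a subsequence, $u_n\rightharpoonup u_\beta$ there, $u_n\to u_\beta$ a.e.\ and in every $L^p$ by the compact Folland--Stein embedding, and by Moser--Trudinger $\{e^{2u_n}\}$ is bounded in every $L^r$, hence $e^{2u_n}\to e^{2u_\beta}$ in $L^1$ (Vitali). Thus $\langle\bP u,u\rangle$ is weakly l.s.c., the linear and the exponential terms are weakly continuous, so $u_\beta$ minimizes $J_\beta$ on $\mathcal{E}$, hence is a critical point of $J_\beta$ on $\mathcal{P}$ and solves the displayed Euler--Lagrange equation. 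Setting $\rho_\beta:=Qe^{2u_\beta}/\int_M Qe^{2u_\beta}\,dx$, the equivalence above shows $\rho_\beta$ solves the fixed-point problem. Finally, $u_\beta$ lies in the energy space, so by Moser--Trudinger $e^{2u_\beta}\in L^p(M)$ for every $p<\infty$, whence $\rho_\beta\in L^p(M)$ for all $1\leq p<\infty$; one may bootstrap in the fixed-point formula for further regularity.

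The main obstacle is the coercivity up to the endpoint $\beta=8/\gamma$: this is where the \emph{sharp} constant in the Moser--Trudinger--Onofri inequality for $\bP$---the one tied to the total $\bQ$-curvature $16\pi^2$ of the standard CR sphere---is indispensable, and where the endpoint genuinely degenerates, the extremal configuration (the round sphere) escaping to infinity. The only other point needing care is the bookkeeping with the projections $\Gamma$, $\Gamma_u$ and ``mod $\mathcal{P}^\perp$'', which is handled by the elementary remark that an identity valued in $\mathcal{P}$ which holds modulo $\mathcal{P}^\perp$ holds exactly.
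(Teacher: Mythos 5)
Your proposal is correct in outline, but it takes a genuinely different route from the paper. You convert the fixed point problem into the Euler--Lagrange equation of the functional $J_{\beta}(u)=\tfrac12\langle\bP u,u\rangle+\tfrac{\beta}{2\kappa}\langle\bQ,u\rangle-\tfrac{\beta}{4}\ln\int_{M}Qe^{2u}$ on mean-zero functions in $\mathcal{P}$ (the passage back and forth is justified exactly as you say, using $\ker\bP=\R$ and the relation $\bP\int_{M}U(\cdot,y)\rho(y)dy=\Gamma\rho-\bQ/\kappa$), and you obtain a minimizer by the direct method, the whole weight of the argument resting on the sharp Moser--Trudinger/Beckner--Onofri inequality $\ln\fint e^{2(u-\bar u)}\leq\frac{1}{16\pi^{2}}\langle\bP u,u\rangle+C$, which gives coercivity precisely for $\beta<8/\gamma=32\pi^{2}$. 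The paper instead runs the Messer--Spohn/Kiessling statistical-mechanics scheme \cite{MS,Kei,CK,Ma}: it maximizes the free energy $\mathcal{F}^{(N)}_{\beta}$ of the $N$-particle canonical ensemble, proves subadditivity and uniform $L^{p}$ bounds for the marginals, and identifies the limit points via the Hewitt--Savage decomposition as maximizers of $\mathcal{F}^{(1)}_{\beta}$, whose Euler--Lagrange equation is the fixed point identity; this deliberately avoids any sharp exponential-class inequality, using only Jensen/H\"older-type estimates on the Green's function singularity, and the same machinery is what allows Section 4 to treat the non-compact Heisenberg case, where your route would require a substitute for the sharp inequality. Conversely, your route is shorter in the compact case, but be precise about its inputs: the sharp-constant inequality, the self-adjointness and spectral gap of $\bP$ on $\mathcal{P}$ (needed for your bound $|\langle\bQ,u\rangle|\lesssim\langle\bP u,u\rangle^{1/2}$ and for the energy space in which $\mathcal{P}$ is weakly closed and compactly embedded) are exactly the nontrivial analytic content of \cite{CaYa1}, established there under the hypotheses $P_{\theta}\geq0$ and positive CR Yamabe constant; under the paper's bare standing assumptions ($\bP\geq0$, $\ker\bP=\R$) one should either assume embeddability or argue via the Green's function expansion of \cite{CCY2}, so state the provenance of the inequality carefully rather than citing \cite{Bran} (which is only the sphere). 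With those attributions made explicit, your argument is a valid alternative proof of the compact-case theorem; also note the harmless point that the constrained minimization over mean-zero $u$ produces no Lagrange multiplier since $J_{\beta}$ is invariant under $u\mapsto u+c$.
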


The idea of the proof of the previous result follows a procedure introduced by Messer and Spohn \cite{MS} for the a smooth interaction potential. This method was then developed by Kiessling \cite{Kei,Kei2,Kei3}. The method mainly consists of studying the typical distribution of a family of particles inside a set, that interact through a given Hamiltonian. In our case it will be $U$. In order to develop this method, we need some probabilistic background.

\subsection{Overview of the probabilistic method}
We first define the Hamiltonian, or the potential, of $N$ particles in the manifold $M$. That is, given $N\in \mathbb{N}$ and $x_{1},\cdots,x_{N}\in M$, the Hamiltonian $U^{(N)}$ is defined by
$$U^{(N)}(x_{1},\cdots,x_{N})=\frac{1}{2(N-1)}\sum_{1\leq i\not=j\leq N}U(x_{i},x_{j})=\frac{1}{N-1}\sum_{1\leq i<j\leq N}U(x_{i},x_{j}).$$ 
We now introduce some probabilistic tools. For each $N\in \mathbb{N}$, denote the probability measures on $M^N$ by $P(M^N)$. For a probability measure $\varrho^{(N)}\in P(M^N)$, denote the associated Radon measure by $\hat{\varrho}^{(N)}$ and by this we mean, its action on functions, that is $$\hat{\varrho}^{(N)}(f)=\int_{M^{N}
}f(y)\varrho(dy).$$
A measure $\mu^{(N)}\in P(M^{N})$ is called absolutely continuous with respect to a measure $\varrho^{(N)}\in P(M^{N})$, written $d\mu^{(N)}<<d\varrho^{(N)}$, if there exists a positive $d\varrho^{(N)}$-integrable function $f(x_1, ..., x_N)$, called the density of $\mu^{(N)}$ with respect to $\varrho^{(N)}$, such that $d\mu^{(N)} = f(x_1, ..., x_N) d\varrho^{(N)}$. By $P^{s}(M^{N})$ we mean the space of exchangeable probabilities, i.e. the subset of $P(M^{N})$ whose elements are permutation symmetric in $x_1$, ..., $x_N \in M$. The $n^{th}$ marginal measure of $\varrho^{(N)}\in P^{s}(M^{N})$, $n < N$, is an element of $P^{s}(M^{n})$, given by integrating $\varrho^{(N)}$ with respect to $N-n$ variable. More precisely, given a measurable set $A\subset M^{n}$, the $n^{th}$ marginal $\varrho^{(N)}_{n}$ is given by 
$$ \varrho^{(N)}_{n} (A)=\varrho^{(N)}(A\times M^{(N-n)}).$$
We let $\Omega=M^{\mathbb{N}}$ be the set of sequences with values in $M$. To $\varrho\in P(M)$ we assign the energy functional defined by
\begin{equation}
\mathcal{E}(\varrho) \equiv {1\over 2} \hat{\varrho}^{\otimes 2}(U(x,y))
= {1\over 2} \int_{M}\int_{M} U(x,y) \varrho(dx)\varrho(dy) ,
\end{equation}
whenever the integral on the right exists. We denote by $P_{\mathcal{E}}(M)$ the subset of $P(M)$ for which $\mathcal{E}(\varrho)$ exists. For $\mu\in P^s(\Omega)$ the mean energy of $\mu$ is defined by 
\begin{equation}
e(\mu) = \lim_{n\to \infty}\frac{1}{n}\hat{\mu}_{n}(U^{(n)})={1\over 2} \hat{\mu}_2(U(x,y)),
\end{equation}
whenever the integral on the right exists. Using the decomposition measure introduced by [HS], one has the following proposition: 
\begin{proposition}
The mean energy of $\mu$, is well defined for those $\mu$ whose decomposition measure $\nu(d\varrho|\mu)$ is concentrated on $P_\mathcal{E}(M)$, and in that case it is given by 
\begin{equation}
e(\mu) = \int_{P_{\mathcal{E}}(M)} \nu(d\varrho|\mu)\mathcal{E}(\varrho).
\end{equation}
\end{proposition}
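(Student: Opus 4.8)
\emph{Proof idea.} The proposition is really a statement about exchangeable measures, and the engine is the Hewitt--Savage (de Finetti) decomposition of [HS]: every $\mu\in P^{s}(\Omega)$ is a mixture $\mu=\int_{P(M)}\varrho^{\otimes\infty}\,\nu(d\varrho|\mu)$ of product measures $\varrho^{\otimes\infty}$ with a unique disintegrating probability $\nu(\cdot|\mu)$ on $P(M)$, and consequently every finite marginal disintegrates in the same way, $\hat\mu_{n}=\int_{P(M)}\hat\varrho^{\otimes n}\,\nu(d\varrho|\mu)$. The first thing I would record is that, since $M$ is compact and $U(x,y)=-\gamma\ln|xy^{-1}|+\tilde{\mathcal H}(x,y)$ with $\gamma>0$ and $\tilde{\mathcal H}$ bounded, the kernel $U$ is lower semicontinuous on $M\times M$ and hence bounded below there; therefore $\hat\varrho^{\otimes 2}(U)$, $\hat\mu_{2}(U)$ and $\hat\mu_{n}(U^{(n)})$ are all well-defined elements of $(-\infty,+\infty]$, so no $\infty-\infty$ ambiguity can arise anywhere in the argument.

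Next I would observe that $\frac1n\hat\mu_{n}(U^{(n)})$ is in fact \emph{constant} for $n\ge 2$. Expanding $U^{(n)}=\frac{1}{2(n-1)}\sum_{i\neq j}U(x_{i},x_{j})$ and using that $\mu_{n}$ is permutation symmetric, all $n(n-1)$ summands integrate to the same quantity $\hat\mu_{2}(U)$ (distributing the finite sum over the integral is legitimate because every summand is bounded below), so $\hat\mu_{n}(U^{(n)})=\frac n2\hat\mu_{2}(U)$ and hence $e(\mu)=\frac12\hat\mu_{2}(U(x,y))$, matching the second equality in the definition of $e(\mu)$. The real task is therefore to identify $\frac12\hat\mu_{2}(U)$ with $\int\mathcal E(\varrho)\,\nu(d\varrho|\mu)$.

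For this I would truncate and pass to the limit monotonically. Write $U=\lim_{k\uparrow\infty}(U\wedge k)$, an increasing limit of bounded Borel functions on $M\times M$. For each bounded $U\wedge k$ the marginal disintegration gives $\hat\mu_{2}(U\wedge k)=\int_{P(M)}\hat\varrho^{\otimes 2}(U\wedge k)\,\nu(d\varrho|\mu)$; letting $k\to\infty$ and applying the monotone convergence theorem on both sides (valid because $U$, and hence each $U\wedge k$, is bounded below) yields $\hat\mu_{2}(U)=\int_{P(M)}\hat\varrho^{\otimes 2}(U)\,\nu(d\varrho|\mu)=2\int_{P(M)}\mathcal E(\varrho)\,\nu(d\varrho|\mu)$ as an identity in $(-\infty,+\infty]$, since $\hat\varrho^{\otimes 2}(U)=2\mathcal E(\varrho)$ by definition. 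Combining with the previous step gives $e(\mu)=\int_{P(M)}\mathcal E(\varrho)\,\nu(d\varrho|\mu)$. If $\nu(\cdot|\mu)$ is concentrated on $P_{\mathcal E}(M)$ then $\mathcal E(\varrho)$ is finite $\nu$-a.e.\ and the integrand is bounded below, so the integral is unambiguous and its domain can be cut down to $P_{\mathcal E}(M)$, which is the asserted identity; if instead $\nu$ gave positive mass to $P(M)\setminus P_{\mathcal E}(M)$ the right-hand side would be $+\infty$, so this concentration is exactly the condition under which $e(\mu)$ is a genuine number. I expect the only genuinely delicate points to be the second passage to the limit, i.e.\ interchanging $k\to\infty$ with the $\nu$-integral, and the care needed to apply the Hewitt--Savage decomposition at the level of finite marginals rather than of $\mu$ itself; the rest is formal.
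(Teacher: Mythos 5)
The paper never proves this proposition; it is quoted as a consequence of the Hewitt--Savage decomposition (with details deferred to Kiessling's papers), and your argument is precisely that standard route: disintegrate the finite marginals via Hewitt--Savage/de Finetti, use exchangeability to get $\tfrac1n\hat\mu_n(U^{(n)})=\tfrac12\hat\mu_2(U)$ for all $n\ge 2$, then identify $\hat\mu_2(U)$ with $\int\hat\varrho^{\otimes 2}(U)\,\nu(d\varrho|\mu)$ by truncating $U\wedge k$ and invoking monotone convergence, which is legitimate here because $U=-\gamma\ln|xy^{-1}|+\tilde{\mathcal H}$ is bounded below on the compact $M\times M$. This is correct as a proof of the stated result, with two small caveats: concentration of $\nu(\cdot|\mu)$ on $P_{\mathcal E}(M)$ does not by itself force $\int_{P_{\mathcal E}(M)}\mathcal E(\varrho)\,\nu(d\varrho|\mu)<\infty$, so ``well defined'' must be read in the extended-real sense and your closing claim that concentration is \emph{exactly} the condition for $e(\mu)$ to be a genuine number is an overstatement of the converse; and the boundedness-below of $U$, on which your monotone-convergence step rests, fails verbatim in the noncompact Heisenberg setting of Section 4, where $U=-\ln|xy^{-1}|$ is unbounded below and the negative part must instead be controlled through assumption (b).
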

In our setting, we define the measure 
\begin{equation}\label{tau}
\tau (dx)=Q(x)dx,
\end{equation}
and we set $\mathcal{M}^{(1)}=\int_{M}Q(y)dy.$ Thus one can define the probability measure $\mu^{(1)}(dx)=\frac{1}{\mathcal{M}^{(1)}} \tau(dx)$. Next, we define the micro-canonical ensemble, \cite{Elli}, by
\begin{equation}\label{min}
\mu^{(N)}=\frac{1}{\mathcal{M}^{(N)}(\beta)}\exp{\left(\beta \frac{1}{N-1}\sum_{1\leq i<j \leq N}U(x_{i},x_{j})\right)} \prod_{1\leq l\leq N}\tau(dx_{l}),
\end{equation}
where $\mathcal{M}^{(N)}(\beta)$ is a normalizing constant making $\mu^{(N)}$ a probability measure. That is
$$\mathcal{M}^{(N)}(\beta)=\int_{M^{N}}\exp{\left(\beta \frac{1}{N-1}\sum_{1\leq i<j \leq N}U(x_{i},x_{j})\right)} \prod_{1\leq l\leq N}\tau(dx_{l}).$$

For each $\varrho^{(N)}(dx_1...dx_N) \in P\bigl(M^{N}\bigr)$, its entropy with respect to the probability measure $\mu^{(1)}(dx_1)\otimes ...\otimes\mu^{(1)}(dx_N)\equiv \mu^{(1) \otimes N}(dx_1 ...dx_N)$ is defined by
\begin{equation}\label{ent}
\mathcal{S}^{(N)}\left(\varrho^{(N)}\right) = - \int_{M^{N}} \ln \left( {d\varrho^{(N)}\quad \over d\mu^{(1)\otimes N}} \right)
 \varrho^{(N)}(dx_1 ... dx_N)
\end{equation}
if $\varrho^{(N)}$ is absolutely continuous with respect to $d\tau^{\otimes N}$, and provided the integral exists. In all other cases, $\mathcal{S}^{(N)}\left(\varrho^{(N)}\right) = -\infty$.
In particular, if $\mu_n$ is the $n^{th}$ marginal of a measure  $\mu \in P^s(\Omega)$, then the entropy of $\mu_n$, $n \in\{1,...\}$, is given by $\mathcal{S}^{(n)}(\mu_n)$, where $\mathcal{S}^{(n)}$ is defined as in (\ref{ent}) with $\varrho^{(n)} = \mu_n$. We also define $\mathcal{S}^{(0)}(\mu_0) = 0$. \\

After having defined the entropy function, we now state some of its classical properties. We refer the reader to \cite{Kei3} for the details of the proofs. For each $\mu \in P^s(\Omega)$, the sequence $n\mapsto \mathcal{S}^{(n)}(\mu_n)$ enjoys the following

\begin{proposition}
\textbf{Non-positivity }\\
For all $n$,
$$
  \mathcal{S}^{(n)}(\mu_n)\leq 0.  
$$

\textbf{ Monotonic decrease}\\
If $n< n_{1}$, then
$$
  \mathcal{S}^{(n_{1})}(\mu_{n_{1}})\leq \mathcal{S}^{(n)}(\mu_n) .
$$ 

\textbf{Strong sub-additivity}
For $n_{1},\, n_{2} \leq n$, and with $\mathcal{S}^{(-m)}(\mu_{-m}) \equiv 0$ for $m>0$,
$$
  \mathcal{S}^{(n)}(\mu_n) \leq \ \mathcal{S}^{(n_{1})}(\mu_{n_{1}}) 
	+ \mathcal{S}^{(n_{2})}(\mu_{n_{2}}) + \mathcal{S}^{(n-n_{1}-n_{2})}(\mu_{n - n_{1} - n_{2}})
      - \mathcal{S}^{(n_{1} + n_{2} - n)}(\mu_{n_{1} + n_{2} - n}). $$
\end{proposition}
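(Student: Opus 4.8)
The three assertions are the classical non-negativity, monotonicity, and strong subadditivity of the relative (Kullback--Leibler) entropy, and the plan is to reduce all of them to two facts. Write $D^{(n)}(\varrho^{(n)}):=-\mathcal{S}^{(n)}(\varrho^{(n)})=\int_{M^{n}}\ln\bigl(d\varrho^{(n)}/d\mu^{(1)\otimes n}\bigr)\,\varrho^{(n)}$ for the relative entropy of $\varrho^{(n)}$ with respect to the product reference $\mu^{(1)\otimes n}$, with the conventions that $D^{(n)}=+\infty$ when $\varrho^{(n)}$ fails to be absolutely continuous with respect to $\mu^{(1)\otimes n}$ or the integral diverges, $\mathcal{S}^{(0)}=0$, and $\mathcal{S}^{(-m)}=0$ for $m>0$. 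The two facts are: (i) $D^{(n)}\ge 0$; and (ii) $D$ is \emph{strongly superadditive} with respect to a product reference, i.e. if the coordinates of $M^{m}$ are grouped into three blocks $X,Y,Z$ and $\mu_{XY}$, $\mu_{YZ}$, $\mu_{Y}$, $\mu_{XYZ}$ denote the corresponding marginals of $\mu$, then $D(\mu_{XYZ})+D(\mu_{Y})\ge D(\mu_{XY})+D(\mu_{YZ})$. Throughout I use that, since $\mu\in P^{s}(\Omega)$, its marginals are consistent (for $n<m$ the $n$-th marginal of $\mu_{m}$ is $\mu_{n}$) and permutation-symmetric, so that one may freely choose which coordinates a given marginal refers to. Fact (i) is the "Non-positivity" statement: putting $f=d\mu_{n}/d\mu^{(1)\otimes n}$ and applying Jensen's inequality to the convex function $\phi(t)=t\ln t$ against the probability measure $\mu^{(1)\otimes n}$ gives $D^{(n)}(\mu_{n})=\int\phi(f)\,d\mu^{(1)\otimes n}\ge\phi\!\left(\int f\,d\mu^{(1)\otimes n}\right)=\phi(1)=0$, hence $\mathcal{S}^{(n)}(\mu_{n})\le 0$ (and there is nothing to prove when $\mu_{n}$ is not absolutely continuous).

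For fact (ii) I would disintegrate $\mu_{XYZ}$ along the product reference, apply the chain rule for relative entropy twice, and identify the difference LHS$-$RHS in (ii) with the average $\mathbb{E}_{\mu_{YZ}}\bigl[D(\mu_{X\mid YZ}\,\|\,\mu_{X\mid Y})\bigr]$ — the conditional mutual information of the $X$-block and the $Z$-block given the $Y$-block — which is $\ge 0$ by non-negativity of relative entropy (Jensen once more). Negating (ii) and using $\mathcal{S}=-D$ yields the strong subadditivity of $\mathcal{S}$ in the form $\mathcal{S}(\mu_{XYZ})\le \mathcal{S}(\mu_{XY})+\mathcal{S}(\mu_{YZ})-\mathcal{S}(\mu_{Y})$; the special case $Y=\emptyset$, combined with $\mathcal{S}^{(0)}=0$, is ordinary subadditivity $\mathcal{S}^{(a+b)}\le\mathcal{S}^{(a)}+\mathcal{S}^{(b)}$.

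The two remaining statements then follow by bookkeeping. For monotone decrease, if $n<n_{1}$, apply subadditivity to the splitting $n_{1}=n+(n_{1}-n)$ and then $\mathcal{S}^{(n_{1}-n)}\le 0$: $\mathcal{S}^{(n_{1})}(\mu_{n_{1}})\le \mathcal{S}^{(n)}(\mu_{n})+\mathcal{S}^{(n_{1}-n)}(\mu_{n_{1}-n})\le \mathcal{S}^{(n)}(\mu_{n})$. For the strong-subadditivity inequality with $n_{1},n_{2}\le n$: if $n_{1}+n_{2}\ge n$, choose inside $\{1,\dots,n\}$ three blocks $X,Y,Z$ with $|Y|=n_{1}+n_{2}-n$, $|X\cup Y|=n_{1}$, $|Y\cup Z|=n_{2}$ — their sizes sum to $n$ and are all non-negative because $n_{1},n_{2}\le n$ — and apply the displayed inequality, noting that $\mathcal{S}^{(n-n_{1}-n_{2})}=0$; if instead $n_{1}+n_{2}<n$, partition $\{1,\dots,n\}$ into three blocks of sizes $n_{1}$, $n_{2}$, $n-n_{1}-n_{2}$ and apply subadditivity twice, noting that $\mathcal{S}^{(n_{1}+n_{2}-n)}=0$. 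The permutation symmetry of $\mu_{n}$ is precisely what makes these choices of blocks legitimate.

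The only genuinely delicate point is fact (ii): making the disintegration and the two applications of the chain rule rigorous requires the existence of regular conditional probabilities on $M^{m}$ and attention to the $(\pm\infty)$-conventions — one must check that the chain-rule identity, and each inequality derived from it, persists when some of the relative entropies involved are infinite, equivalently when the relevant marginals of $\mu$ fail to be absolutely continuous with respect to the product reference. Once (i) and (ii) are established everything else is elementary manipulation of Jensen's inequality and of the consistency and symmetry of the marginals, which is why these properties are quoted from the literature.
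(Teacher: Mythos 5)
Your proposal is correct in outline, but note that the paper itself offers no proof of this proposition: it is quoted as a standard package of properties of the entropy hierarchy, with the reader referred to \cite{Kei3} for the details. What you have written is essentially the argument used in that literature, so the comparison is between your write-up and the cited source rather than with anything in the paper. Your key observation is the right one: the paper's $\mathcal{S}^{(n)}(\mu_n)$ is minus the relative entropy $D(\mu_n\,\|\,\mu^{(1)\otimes n})$ with respect to a \emph{product} reference measure, so non-positivity is Jensen's inequality for $t\mapsto t\ln t$, while ordinary and strong subadditivity of $\mathcal{S}$ are the superadditivity and strong superadditivity of $D$, obtained from the chain rule together with non-negativity of the conditional term $\mathbb{E}_{\mu_{YZ}}\bigl[D(\mu_{X\mid YZ}\,\|\,\mu_{X\mid Y})\bigr]$; your identification of this term as the discrepancy between the two sides is correct, and the regular conditional probabilities it requires exist because $M$ is a compact manifold, hence Polish. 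The bookkeeping is also right: exchangeability and consistency of the marginals of $\mu\in P^{s}(\Omega)$ let you realize $\mu_{n_1}$, $\mu_{n_2}$, $\mu_{n_1+n_2-n}$ as marginals of $\mu_n$ over blocks $X\cup Y$, $Y\cup Z$, $Y$ of sizes $n-n_2$, $n_1+n_2-n$, $n-n_1$ when $n_1+n_2\geq n$, and the case $n_1+n_2<n$ follows from two applications of plain subadditivity, the conventions $\mathcal{S}^{(0)}=\mathcal{S}^{(-m)}=0$ absorbing the boundary cases; monotone decrease is subadditivity plus non-positivity, exactly as you say. The one loose end is the one you flag yourself: when some of the entropies equal $-\infty$ (failure of absolute continuity or divergent integral), the four-term inequality can formally read $-\infty-(-\infty)$, so one must either adopt the natural convention that such degenerate instances are vacuous or note that in the paper's application the marginals $\mu_n(\beta)$ have densities in $L^{p}$, $p>1$, with respect to the finite reference measure, which forces all the entropies involved to be finite; spelling that out would make your sketch a complete proof of what the paper delegates to \cite{Kei3}.
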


As a consequence of the sub-additivity of $\mathcal{S}^{(n)}(\mu_n)$, the limit 
$$ \mathcal{S}(\mu) = \lim_{n\to\infty} \ {1\over n} \mathcal{S}^{(n)}(\mu_n)\, $$
exists whenever $\inf_n\, n^{-1} \mathcal{S}^{(n)}(\mu_n) > -\infty$; otherwise $\mathcal{S}(\mu) = -\infty$. The quantity $\mathcal{S}(\mu)$ is called the mean entropy of $\mu\in P^s(\Omega)$. The mean entropy is an affine function, moreover one has the following representation .
\begin{proposition}
The mean entropy of $\mu$, is given by
$$
\mathcal{S}(\mu) = \int_{P(M)} \nu(d\varrho|\mu) \mathcal{S}^{(1)}(\varrho) .
$$
\end{proposition}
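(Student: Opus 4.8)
\medskip
\noindent\emph{Proof proposal.}
The strategy is to combine the Hewitt--Savage (de Finetti) decomposition of the exchangeable measure $\mu$ with the elementary behaviour of $\mathcal{S}^{(n)}$ on product measures; the crucial \emph{equality} in the limit is then forced by the asymptotic concentration of the de Finetti posterior. Write $\mu=\int_{P(M)}\varrho^{\otimes\infty}\,\nu(d\varrho|\mu)$, so that $\mu_n=\int_{P(M)}\varrho^{\otimes n}\,\nu(d\varrho|\mu)$ for every $n$. Since $Q\in C^{\infty}(M)$ is positive on the compact $M$, the reference measure $\mu^{(1)}$ is equivalent to the volume measure, so no pathology arises from absolute continuity. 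Abbreviate the relative entropy by $H(\alpha\,|\,\beta)\ge 0$, so $\mathcal{S}^{(n)}(\varrho^{(n)})=-H(\varrho^{(n)}\,|\,\mu^{(1)\otimes n})$ and $\mathcal{S}^{(1)}(\varrho)=-H(\varrho\,|\,\mu^{(1)})$.

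\medskip
Two facts are immediate. First, for a product measure $\frac{d\varrho^{\otimes n}}{d\mu^{(1)\otimes n}}(x_1,\dots,x_n)=\prod_{i=1}^{n}\frac{d\varrho}{d\mu^{(1)}}(x_i)$, hence $\mathcal{S}^{(n)}(\varrho^{\otimes n})=n\,\mathcal{S}^{(1)}(\varrho)$ and $\mathcal{S}(\varrho^{\otimes\infty})=\mathcal{S}^{(1)}(\varrho)$ --- which is already the asserted identity at an extreme point, whose de Finetti measure is $\delta_\varrho$. Second, joint convexity of relative entropy together with Jensen's inequality gives $H(\mu_n\,|\,\mu^{(1)\otimes n})\le\int_{P(M)}H(\varrho^{\otimes n}\,|\,\mu^{(1)\otimes n})\,\nu(d\varrho|\mu)=n\int_{P(M)}H(\varrho\,|\,\mu^{(1)})\,\nu(d\varrho|\mu)$; dividing by $n$ and letting $n\to\infty$ yields the ``easy'' inequality $\mathcal{S}(\mu)\ge\int_{P(M)}\mathcal{S}^{(1)}(\varrho)\,\nu(d\varrho|\mu)$ (if the right side is $-\infty$, the same bound and the monotonic decrease of $n\mapsto\mathcal{S}^{(n)}(\mu_n)$ force $\mathcal{S}(\mu)=-\infty$ as well, and nothing remains to prove).

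\medskip
The content is the reverse inequality. Put $a_n=\mathcal{S}^{(n)}(\mu_n)$. Strong sub-additivity with $n_1=n_2=n-1$ (where the negative- and zero-index terms vanish by convention) shows the increments $a_n-a_{n-1}$ are non-increasing, so $(a_n)$ is concave; being also non-positive and non-increasing, $\mathcal{S}(\mu)=\lim_n a_n/n=\lim_n(a_n-a_{n-1})$. By the chain rule for relative entropy, $a_{n-1}-a_n=\mathbb{E}_{\mu_{n-1}}\!\left[H\!\left(\mu_n(\,\cdot\,|\,X_1,\dots,X_{n-1})\,\big|\,\mu^{(1)}\right)\right]$, the conditional relative entropy of the last coordinate given the first $n-1$. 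By de Finetti the regular conditional law is $\mu_n(\,\cdot\,|\,x_1,\dots,x_{n-1})=\int_{P(M)}\varrho\,\nu(d\varrho\,|\,\mu;x_1,\dots,x_{n-1})$, where $\nu(\,\cdot\,|\,\mu;X_1,\dots,X_{n-1})$ is the posterior of the de Finetti parameter, and the associated martingale converges so that $\nu(\,\cdot\,|\,\mu;X_1,\dots,X_{n-1})\Rightarrow\delta_\varrho$ for $\mu$-almost every sequence. Convexity of $H(\,\cdot\,|\,\mu^{(1)})$ and the tower property bound $a_{n-1}-a_n\le\int_{P(M)}H(\varrho\,|\,\mu^{(1)})\,\nu(d\varrho|\mu)$ for all $n$; lower semicontinuity of $H(\,\cdot\,|\,\mu^{(1)})$ for weak convergence together with Fatou's lemma give $\liminf_n(a_{n-1}-a_n)\ge\int_{P(M)}H(\varrho\,|\,\mu^{(1)})\,\nu(d\varrho|\mu)$. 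Hence $\lim_n(a_{n-1}-a_n)=\int_{P(M)}H(\varrho\,|\,\mu^{(1)})\,\nu(d\varrho|\mu)$, i.e. $\mathcal{S}(\mu)=-\int_{P(M)}H(\varrho\,|\,\mu^{(1)})\,\nu(d\varrho|\mu)=\int_{P(M)}\mathcal{S}^{(1)}(\varrho)\,\nu(d\varrho|\mu)$.

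\medskip
The main obstacle is exactly this last limit. Unlike the mean energy $e(\mu)$, which is literally linear in $\mu_2$ so that the analogous identity is a one-line Fubini computation, the mean entropy is a limit of genuinely nonlinear functionals, and only the inequality ``$\ge$'' is cheap. The reverse inequality rests on the fact that observing more and more coordinates of an exchangeable sequence asymptotically determines its de Finetti parameter --- the posterior collapses to a point mass --- combined with the (lower) semicontinuity of relative entropy, which lets one transport that collapse through the entropy functional; one should also check the measurability of $\varrho\mapsto\mathcal{S}^{(1)}(\varrho)$ on $P(M)$ (it holds since $H(\,\cdot\,|\,\mu^{(1)})$ is a supremum of weakly continuous affine functionals), so that the right-hand integral is well defined, possibly equal to $-\infty$. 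Equivalently one may argue through Choquet theory: $\mathcal{S}$ is affine and upper semicontinuous on the compact convex set $P^{s}(\Omega)$ whose extreme points are precisely the $\varrho^{\otimes\infty}$, with $\nu(\,\cdot\,|\,\mu)$ the representing boundary measure --- but establishing the needed \emph{equality}, rather than a mere barycentric inequality, for an affine upper semicontinuous functional again reduces to the same concentration argument.
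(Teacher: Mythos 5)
The paper itself offers no proof of this proposition: it is quoted from the statistical-mechanics literature (the reader is referred to Kiessling \cite{Kei3}, and ultimately this is the Robinson--Ruelle mean-entropy theorem combined with the Hewitt--Savage decomposition), so there is no in-paper argument to compare against. Your proposal is, in substance, a correct self-contained reconstruction of that classical proof, and it follows the standard route: the ``easy'' inequality $\mathcal{S}(\mu)\ge\int\mathcal{S}^{(1)}(\varrho)\,\nu(d\varrho|\mu)$ from convexity of relative entropy applied to the de Finetti mixture $\mu_n=\int\varrho^{\otimes n}\nu(d\varrho|\mu)$; and the reverse inequality by observing that strong sub-additivity makes $n\mapsto\mathcal{S}^{(n)}(\mu_n)$ concave, so $\mathcal{S}(\mu)$ equals the limit of the increments, which the chain rule identifies with expected conditional relative entropies of the predictive law; posterior concentration (martingale convergence of the de Finetti posterior to $\delta_\varrho$) plus lower semicontinuity of $H(\cdot\,|\,\mu^{(1)})$ and Fatou then pin the limit down to $\int H(\varrho|\mu^{(1)})\,\nu(d\varrho|\mu)$. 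This buys the paper a self-contained statement (and makes explicit that the equality, not just the barycentric inequality one would get from Choquet theory, hinges on posterior collapse), at the cost of a page of probabilistic bookkeeping that the citation avoids. Two small points should be tightened: (i) your parenthetical claim that the case $\int\mathcal{S}^{(1)}(\varrho)\,\nu(d\varrho|\mu)=-\infty$ is settled by ``the same bound and monotonic decrease'' is not right as stated, since the easy inequality is vacuous there; but your own Fatou/lower-semicontinuity step gives $\lim_n\bigl(\mathcal{S}^{(n-1)}(\mu_{n-1})-\mathcal{S}^{(n)}(\mu_n)\bigr)=+\infty$ in that case, so $\mathcal{S}(\mu)=-\infty$ follows from the hard direction anyway; (ii) the chain-rule identity for the increments presupposes $\mathcal{S}^{(n-1)}(\mu_{n-1})>-\infty$; if some $\mathcal{S}^{(n)}(\mu_n)=-\infty$, note separately that then $\mathcal{S}(\mu)=-\infty$ while the finite-$n$ convexity bound $-\mathcal{S}^{(n)}(\mu_n)\le n\int H(\varrho|\mu^{(1)})\,\nu(d\varrho|\mu)$ forces the right-hand side to be $-\infty$ as well, so the identity holds trivially. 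With these remarks your argument is complete and consistent with the proof in the cited sources.
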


\section{Proof of Theorem 1.1}

\subsection{First properties of the probability measures}

\noindent
\bigskip
We begin investigating our problem by following the approach developed in \cite{Kei3}. First we have the following integrability property.
\begin{proposition}\label{propp}
For $\beta\gamma \in [0, 8)$, the measure $\mu^{(N)}$ satisfies $d\mu^{(N)}<<d\tau^{\otimes N}$, moreover,
the associated density belongs to $L^{p}(M^{N},\tau^{\otimes N})$ for $p\in [1,\infty]$ if $\beta=0$ and $p\in [1, \frac{8}{\beta \gamma})$ if $\beta \gamma \in (0, 8)$, for $N$ big enough.
\end{proposition}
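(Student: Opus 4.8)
\emph{Proof strategy.} Since $\mu^{(N)}$ is by construction a density against $\tau^{\otimes N}$, the absolute continuity $d\mu^{(N)}\ll d\tau^{\otimes N}$ is free once the normalizing constant $\mathcal{M}^{(N)}(\beta)$ is finite and positive, so the plan is to (i) reduce the whole statement, including $\mathcal{M}^{(N)}(\beta)<\infty$, to the finiteness of one scalar integral, and (ii) prove that finiteness. The density is
$$f^{(N)}(x_1,\dots,x_N)=\frac{1}{\mathcal{M}^{(N)}(\beta)}\exp\Bigl(\frac{\beta}{N-1}\sum_{1\le i<j\le N}U(x_i,x_j)\Bigr).$$
For the reduction I would use the local expansion $U(x,y)=-\gamma\ln|xy^{-1}|+\tilde{\mathcal{H}}(x,y)$ with $\tilde{\mathcal{H}}$ bounded, which, combined with the compactness of $M$, shows that $U(x,y)+\gamma\ln|xy^{-1}|$ is globally bounded. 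In particular $U$ is bounded below, so (using $Q>0$) $\mathcal{M}^{(N)}(\beta)\ge e^{-\beta CN/2}(\mathcal{M}^{(1)})^{N}>0$; and from $U(x,y)\le C-\gamma\ln|xy^{-1}|$ together with $\|Q\|_{\infty}<\infty$ one obtains, for every $p\ge 1$,
$$\int_{M^{N}}\bigl(f^{(N)}\bigr)^{p}\,d\tau^{\otimes N}\le \frac{e^{p\beta CN/2}\,\|Q\|_{\infty}^{N}}{\mathcal{M}^{(N)}(\beta)^{p}}\int_{M^{N}}\prod_{1\le i<j\le N}|x_ix_j^{-1}|^{-a}\,dx_1\cdots dx_N,\qquad a:=\frac{p\beta\gamma}{N-1}.$$
Applying this with $p=1$ also yields $\mathcal{M}^{(N)}(\beta)<\infty$ as soon as the integral on the right is finite, so only that integral remains.

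\emph{The key estimate.} I claim that
$$\mathcal{I}_N(a):=\int_{M^{N}}\prod_{1\le i<j\le N}|x_ix_j^{-1}|^{-a}\,dx_1\cdots dx_N<\infty\qquad\text{whenever }0\le a<\frac{8}{N},$$
the threshold $8=2\cdot 4$ reflecting the homogeneous dimension $4$ of $\H^{1}$, the local model of $M$. The proof I have in mind is a Hölder splitting: write the integrand as $\prod_{i=1}^{N}F_i$ with $F_i:=\prod_{j\ne i}|x_ix_j^{-1}|^{-a/2}$ — admissible since the Korányi gauge is symmetric, so each unordered pair gets counted twice with exponent $a/2$ — and then apply Hölder with all $N$ conjugate exponents equal to $N$:
$$\mathcal{I}_N(a)\le\prod_{i=1}^{N}\Bigl(\int_{M^{N}}\prod_{j\ne i}|x_ix_j^{-1}|^{-aN/2}\,dx_1\cdots dx_N\Bigr)^{1/N}.$$
In each factor the variables $x_j$ ($j\ne i$) are completely decoupled, so by Fubini it equals $\int_{M}\prod_{j\ne i}\bigl(\int_{M}|x_ix_j^{-1}|^{-aN/2}\,dx_j\bigr)\,dx_i$; and since $M$ is compact and locally modelled on $\H^{1}$, one has $\sup_{z\in M}\int_{M}|zx^{-1}|^{-s}\,dx<\infty$ precisely when $s<4$. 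With $s=aN/2$ this is exactly the hypothesis $a<8/N$, so each factor is at most $C^{N-1}|M|$ and $\mathcal{I}_N(a)<\infty$. This is the only delicate step, and the delicacy is purely in identifying the local model correctly, so that single powers $|xy^{-1}|^{-s}$ are integrable exactly below the exponent $4$; everything else (the contributions of $\tilde{\mathcal{H}}$, of $Q$, and of the normalization) is bookkeeping.

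\emph{Conclusion.} It then remains to unwind the reduction. If $\beta=0$ the density is $f^{(N)}=(\mathcal{M}^{(1)})^{-N}\prod_{l}Q(x_l)$, which is bounded, hence lies in $L^{p}$ for all $p\in[1,\infty]$ and all $N$. If $\beta\gamma\in(0,8)$, I fix $p\in[1,\tfrac{8}{\beta\gamma})$, note that $p\beta\gamma<8$, and choose $N>\tfrac{8}{8-p\beta\gamma}$; then $a=\tfrac{p\beta\gamma}{N-1}<\tfrac{8}{N}$, so the key estimate gives $\mathcal{I}_N(a)<\infty$, whence $\mathcal{M}^{(N)}(\beta)\in(0,\infty)$ and $f^{(N)}\in L^{p}(M^{N},\tau^{\otimes N})$. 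The restriction to large $N$ is genuine rather than an artifact: as soon as $\beta\gamma\ge 4$, the constant $\mathcal{M}^{(N)}(\beta)$ is already infinite for $N=2$, since there the integrand behaves like $|x_1x_2^{-1}|^{-\beta\gamma}$ near the diagonal.
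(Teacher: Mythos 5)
Your proposal is correct and follows essentially the same route as the paper: reduce everything to the finiteness of $\mathcal{M}^{(N)}(p\beta)$, split the pairwise interaction into $N$ single-particle factors, decouple by Fubini, and invoke the local integrability of $|xy^{-1}|^{-s}$ for $s<4$ (homogeneous dimension $4$), arriving at the same constraint $p\beta\gamma\tfrac{N}{N-1}<8$, i.e. $N>\tfrac{8}{8-p\beta\gamma}$. Your generalized H\"older inequality with $N$ equal exponents is just a cosmetic variant of the paper's convexity-of-the-exponential (Jensen/AM--GM) step, and your added remarks (positivity of $\mathcal{M}^{(N)}(\beta)$, the $\beta=0$ case, failure for small $N$ when $\beta\gamma\ge 4$) are correct bookkeeping the paper leaves implicit.
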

\begin{proof}
Indeed, using the convexity of the exponential function and the symmetry of $U$, we have
\begin{align}
\mathcal{M}^{(N)}(p\beta)&=\int_{M^{N}}\exp\Big(p\beta U^{(N)}(x_{1},\cdots,x_{N})\Big)d\tau^{\otimes N}(x_{1},\cdots,x_{N})\notag\\
&\leq \frac{1}{N}\sum_{ i=1}^{N}\int_{M^{N}}\exp\Big(\frac{p\beta}{2}\frac{N}{N-1}\sum_{j=1,j\not=i}^{N}U(x_{i},x_{j})\Big)d\tau^{\otimes N}(x_{1},\cdots,x_{N})\notag\\
&\lesssim \frac{1}{N}\sum_{ i=1}^{N}\int_{M}\Big(\int_{M}\exp(\frac{p\beta}{2}\frac{N}{N-1}U(x,y))\tau(dy)\Big)^{N-1}\tau(dx)\notag\\
&\lesssim 1+\int_{M}\Big(\int_{B_{x}(1)}\exp\Big(\frac{p\beta\gamma}{2}\frac{N}{N-1}\ln(\frac{1}{|xy^{-1}|})\Big)\tau(dy)\Big)^{N-1}\tau(dx)\notag
\end{align}
It is clear that the integrand is finite, whenever $p\beta \gamma \frac{N}{N-1}<8$.
\end{proof}

\noindent
We set the approximated variational problem by defining the functional $\mathcal{F}^{(N)}_{\beta}$ as follows
$$\mathcal{F}^{(N)}_{\beta}(\varrho^{(N)}):=\mathcal{S}^{(N)}(\varrho^{(N)})+\beta \hat{\varrho}^{(N)}\left(U^{(N)}\right).$$
This functional is well defined on probability measures in $P(M^{N})\cap \cup_{p>1} L^{p}(M^{N},d\mu^{(1)\otimes N})$ that are absolutely continuous with respect to $\tau^{\otimes N}$. We will denote their space by $X_{N}$.

\begin{lemma}\label{min}
For $\beta \gamma\in [0,8)$ the functional $\mathcal{F}_{\beta}^{(N)}$ has a unique maximum and it is achieved by the measure $\mu^{(N)}$. That is
\begin{equation}\label{inf}
\mathcal{F}^{(N)}(\beta):=\sup_{\varrho^{(N)}\in X_{N}}\mathcal{F}_{\beta}^{(N)}(\varrho^{(N)})=\mathcal{F}_{\beta}^{(N)}(\mu^{(N)}).
\end{equation}
Moreover,
\begin{equation}\label{infval}
\mathcal{F}_{\beta}^{(N)}(\mu^{(N)})= \ln \left(\frac{\mathcal{M}^{(N)}(\beta)}{(\mathcal{M}^{(1)})^{N}}\right).
\end{equation}
\end{lemma}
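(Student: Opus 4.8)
The plan is to recognize $\mathcal{F}^{(N)}_\beta$ as (minus) a relative entropy plus a constant, and to exploit the non-negativity / Gibbs variational characterization that underlies it. Concretely, for $\varrho^{(N)}\in X_N$ with density $f$ with respect to $\mu^{(1)\otimes N}$, write out
\[
\mathcal{F}^{(N)}_\beta(\varrho^{(N)}) = -\int_{M^N}\ln f\, d\varrho^{(N)} + \beta\int_{M^N}U^{(N)}\,d\varrho^{(N)}.
\]
On the other hand, the Gibbs measure $\mu^{(N)}$ from \eqref{min} has density
\[
\frac{d\mu^{(N)}}{d\mu^{(1)\otimes N}} = \frac{(\mathcal{M}^{(1)})^N}{\mathcal{M}^{(N)}(\beta)}\,\exp\!\big(\beta U^{(N)}(x_1,\dots,x_N)\big),
\]
since each $\tau(dx_l) = \mathcal{M}^{(1)}\mu^{(1)}(dx_l)$. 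The first step is therefore to substitute this into the definition of the relative entropy $\mathcal{S}\big(\varrho^{(N)}\,\|\,\mu^{(N)}\big) := \int \ln\!\big(d\varrho^{(N)}/d\mu^{(N)}\big)\,d\varrho^{(N)}$ and observe the algebraic identity
\[
\mathcal{S}\big(\varrho^{(N)}\,\|\,\mu^{(N)}\big) = -\mathcal{F}^{(N)}_\beta(\varrho^{(N)}) + \ln\!\left(\frac{\mathcal{M}^{(N)}(\beta)}{(\mathcal{M}^{(1)})^N}\right).
\]
Here the $\beta U^{(N)}$ terms cancel against the exponential in $d\mu^{(N)}$, and the normalizing constants collect into the stated logarithm.

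The second step is to invoke the standard fact that relative entropy (Kullback–Leibler divergence) is non-negative and vanishes exactly when the two measures coincide — i.e. $\mathcal{S}\big(\varrho^{(N)}\,\|\,\mu^{(N)}\big)\ge 0$ with equality iff $\varrho^{(N)} = \mu^{(N)}$, which is Jensen's inequality applied to the strictly convex function $t\mapsto t\ln t$. Combined with the identity above, this immediately gives
\[
\mathcal{F}^{(N)}_\beta(\varrho^{(N)}) \le \ln\!\left(\frac{\mathcal{M}^{(N)}(\beta)}{(\mathcal{M}^{(1)})^N}\right) = \mathcal{F}^{(N)}_\beta(\mu^{(N)}),
\]
with equality only at $\varrho^{(N)}=\mu^{(N)}$, proving both \eqref{inf} and \eqref{infval} at once, and uniqueness of the maximizer.

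The points requiring care — and where I expect the real work to lie — are the finiteness/integrability issues that make all the manipulations legitimate rather than formal. One must check that $\mathcal{M}^{(N)}(\beta)<\infty$ and $\mathcal{M}^{(N)}(p\beta)<\infty$ for some $p>1$ (so that $\mu^{(N)}$ indeed lies in $X_N$ and $\mathcal{F}^{(N)}_\beta(\mu^{(N)})$ is a genuine real number): this is exactly Proposition \ref{propp}. Then, for a competitor $\varrho^{(N)}\in X_N$, one needs that the integral $\int U^{(N)}\,d\varrho^{(N)}$ is well-defined (not $\infty-\infty$) and that $\int \ln f\,d\varrho^{(N)}$ is well-defined; the logarithmic singularity $U(x,y)\sim -\gamma\ln|xy^{-1}|$ means $U^{(N)}$ is bounded above but not below, so one must argue that its negative part is $\varrho^{(N)}$-integrable using the $L^p$ membership of $f$ together with the local integrability of powers of $\ln(1/|xy^{-1}|)$ against $\tau$, via Hölder's inequality with the conjugate exponent. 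Once these integrability facts are in place, the cancellation identity and the non-negativity of relative entropy are purely formal, so the substance of the lemma is really bookkeeping to ensure every term in the chain of equalities is finite.
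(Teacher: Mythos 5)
Your proposal is correct and follows essentially the same route as the paper: both rewrite $\mathcal{F}^{(N)}_{\beta}(\varrho^{(N)})$ as $\mathcal{F}^{(N)}_{\beta}(\mu^{(N)})$ minus the relative entropy of $\varrho^{(N)}$ with respect to the Gibbs measure $\mu^{(N)}$, and then conclude by non-negativity of relative entropy (the paper phrases this via the inequality $x\ln x\geq x-1$ with equality iff $x=1$, which is the same fact you invoke through Jensen applied to $t\mapsto t\ln t$). Your added remarks on finiteness of $\mathcal{M}^{(N)}(\beta)$ and integrability of $U^{(N)}$ correspond to the paper's brief appeal to Proposition \ref{propp} and the restriction $\beta\gamma\in[0,8)$.
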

\begin{proof} 

First, notice that $\mathcal{F}_\beta^{(N)}(\mu^{(N)})$ is well defined for $\beta \in [0,\frac{8}{\gamma})$ and an explicit computation gives the equation (\ref{infval}).\\
Now,  
\begin{align}
\mathcal{F}_\beta^{(N)}\left(\varrho^{(N)}\right)& = \beta \int _{M^{N}} U^{(N)} \frac{d\varrho^{(N)}}{d\mu^{(1)\otimes N}}d\mu^{(1)\otimes N}(dx_{1},...dx_{N}) \notag\\
&\quad- \int _{M^{N}} \ln \left(\frac{d\varrho^{(N)}}{d\mu^{(1)\otimes N}} \right) \frac{d\varrho^{(N)}}{d\mu^{(1)\otimes N}}d\mu^{(1)\otimes N}(dx_{1},...dx_{N}).
\end{align}
But 
$$\frac{d\varrho^{(N)}}{d\mu^{(1)\otimes N}}=\frac{(\mathcal{M}^{(1)})^{N}}{\mathcal{M}^{(N)}(\beta)} e^{\beta U^{(N)}}\frac{d\varrho^{(N)}}{d\mu^{(N)}}.$$
Hence, 
\begin{align}
\mathcal{F}_{\beta}^{(N)}(\varrho^{(N)})&=-\int _{M^{N}} \ln \left( \frac{d\varrho^{(N)}}{d\mu^{(N)}}\right)\varrho^{(N)}(dx_{1},...,dx_{N})-\ln \left(\frac{(\mathcal{M}^{(1)})^{N}}{\mathcal{M}^{(N)}(\beta)}\right)\notag\\
&=-\int _{M^{N}} \ln \left( \frac{d\varrho^{(N)}}{d\mu^{(N)}}\right)\varrho^{(N)}(dx_{1},...,dx_{N})+\mathcal{F}_{\beta}^{(N)}(\mu^{(N)}),\notag
\end{align}
and using the fact that $x\ln x \geq x-1$, with equality iff $x=1$, we find that $$\mathcal{F}_\beta^{(N)}(\varrho^{(N)})-\mathcal{F}_\beta^{(N)}(\mu^{(N)}) \leq 0,$$ with equality holding if and only if $\varrho^{(N)} = \mu^{(N)}$. 
\end{proof}
Next, we show a very important property for the sequence $\mathcal{F}^{(N)}(\beta)$.
\begin{proposition}\label{prop1}
Given $\beta <\frac{8}{\gamma}$, the limit
$$\lim_{N\to \infty}\frac{1}{N}\mathcal{F}^{(N)}(\beta)=:f(\beta),$$
exists and is finite.
\end{proposition}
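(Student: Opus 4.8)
The plan is to show that the sequence $N \mapsto \mathcal{F}^{(N)}(\beta)$ is superadditive up to a lower-order correction, so that a Fekete-type subadditivity argument yields the existence of the limit, and then to bound the sequence uniformly to guarantee finiteness. Concretely, by Lemma \ref{min} we have the closed form $\mathcal{F}^{(N)}(\beta) = \ln\bigl(\mathcal{M}^{(N)}(\beta)/(\mathcal{M}^{(1)})^N\bigr)$, so the statement is equivalent to the existence of $\lim_{N\to\infty} \frac1N \ln \mathcal{M}^{(N)}(\beta)$, i.e. the existence of the \emph{thermodynamic limit} of the free energy. The standard route (following \cite{Kei3}) is to exploit the variational characterization: since $\mathcal{F}^{(N)}(\beta) = \sup_{\varrho^{(N)} \in X_N} \mathcal{F}^{(N)}_\beta(\varrho^{(N)})$, one obtains a lower bound on $\mathcal{F}^{(N)}(\beta)$ for \emph{any} admissible trial measure, and an upper bound by using the strong subadditivity of the entropy together with an estimate on the interaction energy $\hat\varrho^{(N)}(U^{(N)})$ under splitting of the $N$ particles into blocks.

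First I would establish the upper bound on $\limsup_N \frac1N \mathcal{F}^{(N)}(\beta)$. Writing $N = kn + r$ with $0 \le r < n$, I apply the strong subadditivity property of $\mathcal{S}^{(N)}$ (iterated) to the maximizing measure $\mu^{(N)}$ to bound $\mathcal{S}^{(N)}(\mu^{(N)}_N)$ from above by roughly $k$ copies of $\mathcal{S}^{(n)}$ of the $n$-th marginal plus a bounded remainder; simultaneously the energy term $\hat\mu^{(N)}(U^{(N)})$ decomposes, using exchangeability and the definition $U^{(N)} = \frac{1}{N-1}\sum_{i<j} U(x_i,x_j)$, into a sum over pairs within blocks plus cross-block pairs, and the cross terms are controlled because $U(x,y) = -\gamma \ln|xy^{-1}| + \tilde{\mathcal{H}}(x,y)$ has an integrable (indeed, for well-separated blocks, uniformly bounded) singularity — here the hypothesis $\beta\gamma < 8$ enters to keep all the relevant exponential integrals (as in Proposition \ref{propp}) finite. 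Passing $k \to \infty$ first and then $n\to\infty$ gives $\limsup_N \frac1N\mathcal{F}^{(N)}(\beta) \le \inf_n \frac1n \mathcal{F}^{(n)}(\beta) + (\text{correction} \to 0)$. The matching lower bound $\liminf_N \frac1N \mathcal{F}^{(N)}(\beta) \ge \frac1n \mathcal{F}^{(n)}(\beta) - o(1)$ comes from plugging the product trial measure $\bigl(\mu^{(n)}\bigr)^{\otimes k} \otimes \mu^{(1)\otimes r}$ into the variational principle of Lemma \ref{min} and noting that $\mathcal{F}^{(N)}_\beta$ evaluated on a product measure is additive in entropy while the energy only loses the cross-block pair interactions, which again contribute $o(N)$.

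For finiteness: the upper bound $\frac1N\mathcal{F}^{(N)}(\beta) = \frac1N \ln \mathcal{M}^{(N)}(\beta) - \ln \mathcal{M}^{(1)}$ is bounded above by applying Hölder/Jensen as in the proof of Proposition \ref{propp} — the estimate there with $p=1$ shows $\mathcal{M}^{(N)}(\beta) \lesssim C^N$ for $\beta\gamma < 8$, so $\limsup_N \frac1N\mathcal{F}^{(N)}(\beta) < \infty$. For the lower bound, taking $\varrho^{(N)} = \mu^{(1)\otimes N}$ as trial measure gives $\mathcal{F}^{(N)}_\beta(\mu^{(1)\otimes N}) = \beta\, \hat{\mu}^{(1)\otimes N}(U^{(N)}) = \frac{\beta}{2}\int_M\int_M U(x,y)\,d\mu^{(1)}(x)\,d\mu^{(1)}(y)$, which is a finite constant (independent of $N$) since $U$ is locally integrable and $M$ is compact; hence $\frac1N\mathcal{F}^{(N)}(\beta) \ge \frac{\beta}{2N}\mathcal{E}(\mu^{(1)}) \cdot (\text{const}) \to 0 > -\infty$. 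Combining, the limit $f(\beta)$ exists and is finite.

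The main obstacle I anticipate is the careful bookkeeping in the upper-bound step: one must verify that the cross-block interaction energies genuinely contribute $o(N)$ after dividing by $N$, which requires a quantitative handle on $\int_{M^2} |U(x,y)|\, d\tau(x)\,d\tau(y)$ and, more delicately, on the tail behavior of the marginals of $\mu^{(N)}$ near the diagonal — this is exactly where the restriction $\beta\gamma < 8$ is not merely convenient but essential, since it is the threshold past which the single-particle partition-function-type integrals $\int_M (\int_{B_x(1)} |xy^{-1}|^{-p\beta\gamma/2}\,d\tau(y))^{N-1} d\tau(x)$ cease to be finite. Getting the subadditivity correction terms to be uniformly $o(n)$ (not just $o(N)$ for fixed $n$) is the technical heart of the argument.
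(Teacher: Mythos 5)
Your overall skeleton (a Fekete-type argument plus uniform two-sided bounds, as in \cite{Kei3}) is the same as the paper's, but the way you execute the key subadditivity step contains a genuine error. You propose to split the $N$ particles into blocks and to argue that the cross-block pair interactions contribute $o(N)$. Under the mean-field normalization $U^{(N)}=\frac{1}{N-1}\sum_{i<j}U(x_i,x_j)$ this is false: with blocks of size $n$ there are of order $N^{2}$ cross-block pairs, so after dividing by $N-1$ their total contribution is of order $N$ --- comparable to the free energy itself, not a lower-order correction --- and no choice of $n$ fixed, $k\to\infty$ makes it uniformly $o(N)$. The same normalization slip shows up in your lower bound: $\hat{\mu}^{(1)\otimes N}(U^{(N)})=\frac{N}{2}\,\hat{\mu}^{(1)\otimes 2}(U(x,y))$, not $\frac{1}{2}\hat{\mu}^{(1)\otimes 2}(U)$ (harmless for finiteness, but symptomatic). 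So the step you yourself identify as the ``technical heart'' --- making the cross terms uniformly small --- cannot be carried out as stated, and in fact no estimate of cross terms is needed at all. (Also, the relevant property here is subadditivity of $\mathcal{F}^{(N)}(\beta)$, not approximate superadditivity.)

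The paper's proof sidesteps the issue by exploiting exchangeability exactly rather than approximately. Since $\mu^{(N)}$ is symmetric and $U$ is symmetric, $\hat{\mu}^{(N)}(U^{(N)})=\frac{N}{2}\hat{\mu}^{(N)}_{2}(U(x,y))$, so for $N=N_{1}+N_{2}$ one writes $\mathcal{F}^{(N)}_{\beta}(\mu^{(N)})=\mathcal{S}^{(N)}(\mu^{(N)})+\frac{\beta}{2}(N_{1}+N_{2})\hat{\mu}^{(N)}_{2}(U)$, bounds the entropy by subadditivity, $\mathcal{S}^{(N)}(\mu^{(N)})\le \mathcal{S}^{(N_{1})}(\mu^{(N)}_{N_{1}})+\mathcal{S}^{(N_{2})}(\mu^{(N)}_{N_{2}})$, and recognizes that each block term $\mathcal{S}^{(N_{i})}(\mu^{(N)}_{N_{i}})+\frac{\beta}{2}N_{i}\hat{\mu}^{(N)}_{2}(U)=\mathcal{F}^{(N_{i})}_{\beta}(\mu^{(N)}_{N_{i}})\le \mathcal{F}^{(N_{i})}(\beta)$, because the marginals are admissible trial measures in the variational principle of Lemma \ref{min}. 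This yields the exact inequality $\mathcal{F}^{(N)}(\beta)\le \mathcal{F}^{(N_{1})}(\beta)+\mathcal{F}^{(N_{2})}(\beta)$, with no remainder. Combined with the uniform lower bound $\frac{1}{N}\mathcal{F}^{(N)}(\beta)\ge \frac{\beta}{2}\hat{\mu}^{(1)\otimes 2}(U(x,y))$ obtained from Jensen's inequality (finite because the logarithmic singularity of $U$ is integrable on the compact $M$), Fekete's lemma gives existence and finiteness of $f(\beta)$. Your upper bound via the computation of Proposition \ref{propp} for $\beta\gamma<8$ is correct and is exactly the paper's remark, but the heart of the argument is the exact splitting above, not a cross-term estimate.
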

The proof of this proposition will follow from the next two lemmata.
\begin{lemma}\label{lem34}
The sequence $\frac{1}{N}\mathcal{F}^{(N)}(\beta)$ is bounded below and above independently of $N$.
\end{lemma}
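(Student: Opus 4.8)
The plan is to start from the explicit value (\ref{infval}), which gives
$$\frac{1}{N}\mathcal{F}^{(N)}(\beta)=\frac{1}{N}\ln\mathcal{M}^{(N)}(\beta)-\ln\mathcal{M}^{(1)},$$
so the whole statement reduces to bounding $\frac{1}{N}\ln\mathcal{M}^{(N)}(\beta)$ from above and from below by constants not depending on $N$. Since $\mathcal{M}^{(N)}(\beta)<\infty$ for every $N$ by Proposition \ref{propp}, it suffices to establish these bounds for $N$ large.

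For the upper bound I would reuse the chain of inequalities from the proof of Proposition \ref{propp} with $p=1$: convexity of the exponential together with the symmetry of $U$ gives
$$\mathcal{M}^{(N)}(\beta)\leq\int_{M}\Big(\int_{M}\exp\big(\tfrac{\beta}{2}\tfrac{N}{N-1}U(x,y)\big)\,\tau(dy)\Big)^{N-1}\tau(dx).$$
Writing $U(x,y)=-\gamma\ln|xy^{-1}|+\tilde{\mathcal{H}}(x,y)$ with $\tilde{\mathcal{H}}$ bounded, and using that $\frac{\beta\gamma}{2}\frac{N}{N-1}<4$ once $N$ is large (because $\beta\gamma<8$), the inner integral is bounded uniformly in $x\in M$ by a constant $C$ depending only on $Q$ and $\beta$, since $y\mapsto|xy^{-1}|^{-q}$ has uniformly bounded integral over $M$ for $q<4$. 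Hence $\mathcal{M}^{(N)}(\beta)\leq C^{N-1}\mathcal{M}^{(1)}$, and therefore $\frac{1}{N}\ln\mathcal{M}^{(N)}(\beta)\leq\frac{N-1}{N}\ln C+\frac{1}{N}\ln\mathcal{M}^{(1)}$ is bounded above independently of $N$.

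For the lower bound I would use Jensen's inequality in the opposite direction. Since $\prod_{l}\tau(dx_{l})=(\mathcal{M}^{(1)})^{N}\,d\mu^{(1)\otimes N}$ and $\exp$ is convex,
$$\mathcal{M}^{(N)}(\beta)=(\mathcal{M}^{(1)})^{N}\int_{M^{N}}e^{\beta U^{(N)}}\,d\mu^{(1)\otimes N}\geq(\mathcal{M}^{(1)})^{N}\exp\Big(\beta\int_{M^{N}}U^{(N)}\,d\mu^{(1)\otimes N}\Big)=(\mathcal{M}^{(1)})^{N}e^{\frac{\beta N}{2}\bar{U}},$$
where $\bar{U}:=\int_{M}\int_{M}U(x,y)\,\mu^{(1)}(dx)\,\mu^{(1)}(dy)$ is a finite constant because the logarithmic singularity of $U$ is integrable; the last identity uses that $U^{(N)}$ is a sum of $\binom{N}{2}$ terms each integrating to $\frac{1}{N-1}\bar{U}$. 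Thus $\frac{1}{N}\ln\mathcal{M}^{(N)}(\beta)\geq\ln\mathcal{M}^{(1)}+\frac{\beta}{2}\bar{U}$, that is $\frac{1}{N}\mathcal{F}^{(N)}(\beta)\geq\frac{\beta}{2}\bar{U}$, independently of $N$.

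The only point that needs care is genuine uniformity in $N$: one must check that $\frac{\beta\gamma}{2}\frac{N}{N-1}$ stays bounded away from the critical exponent $4$ for all large $N$, so that the bound on the inner integral in the upper estimate does not blow up, and that the constant $C$ there can be chosen uniformly in $x$; both are routine refinements of what is already done in the proof of Proposition \ref{propp}. I do not anticipate a serious obstacle — this lemma is essentially bookkeeping around that proposition plus a one-line Jensen estimate.
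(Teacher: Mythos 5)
Your proof is correct and follows essentially the same route as the paper: the lower bound via Jensen's inequality against $\mu^{(1)\otimes N}$ giving $\frac{1}{N}\mathcal{F}^{(N)}(\beta)\geq\frac{\beta}{2}\hat{\mu}^{(1)\otimes 2}(U)$, and the upper bound by rerunning the estimate of Proposition \ref{propp} with $p=1$, which is exactly what the paper invokes. The uniformity points you flag (keeping $\frac{\beta\gamma}{2}\frac{N}{N-1}$ away from $4$ and the $x$-uniform bound on the inner integral) are handled as you say, so nothing is missing.
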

{\it Proof:}
For the bound from below, we apply Jensen's inequality to $\mathcal{M}^{(N)}(\beta)$ with the concave function $\ln(\cdot)$. This leads to $$\ln\left(\frac{\mathcal{M}^{(N)}(\beta)}{\left(\mathcal{M}^{(1)}\right)^{N}}\right)\geq \frac{N}{2}\beta\hat{ \mu}^{(1)\otimes 2}(U(x,y))$$
Hence,
$$\frac{1}{N}\mathcal{F}^{(N)}(\beta)\geq \frac{\beta}{2} \hat{\mu}^{(1)\otimes 2}(U(x,y)).$$
The bound from above, can be deduced the exact same way as in Proposition 3.1.
\hfill$\Box$

\begin{lemma}\label{lem35}
The sequence $N\to \mathcal{F}^{(N)}(\beta)$ is sub-additive. That is, if $N=N_{1}+N_{2}$ then
$$\mathcal{F}^{(N)}(\beta)\leq \mathcal{F}^{(N_{1})}(\beta)+\mathcal{F}^{(N_{2})}(\beta).$$
\end{lemma}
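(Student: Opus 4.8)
The strategy is to pass through the partition functions. Since $\mathcal{F}^{(N)}(\beta)$ is attained at $\mu^{(N)}$ and, by \eqref{infval}, equals $\ln\big(\mathcal{M}^{(N)}(\beta)/(\mathcal{M}^{(1)})^{N}\big)$, and since $N=N_{1}+N_{2}$, the asserted inequality is equivalent to the submultiplicativity
$$\mathcal{M}^{(N)}(\beta)\ \leq\ \mathcal{M}^{(N_{1})}(\beta)\,\mathcal{M}^{(N_{2})}(\beta).$$
So I would prove exactly this, for $N_{1},N_{2}\geq 2$ chosen large enough that all three partition functions are finite (by the estimate in the proof of Proposition \ref{propp} this only requires $\beta\gamma<8$; the degenerate cases $N_{1}=1$ or $N_{2}=1$ are not needed for Proposition \ref{prop1}, where one only uses sub-additivity along large indices, via Fekete's lemma together with the two-sided bound of Lemma \ref{lem34}).

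The heart of the matter is a combinatorial averaging identity for the Hamiltonian. Enumerate the $\binom{N}{N_{1}}$ ways of splitting $\{1,\dots,N\}=A_{\pi}\sqcup B_{\pi}$ with $|A_{\pi}|=N_{1}$, $|B_{\pi}|=N_{2}$, and for each put $U^{(N_{1})}_{A_{\pi}}:=\frac{1}{N_{1}-1}\sum_{\substack{i<j\\ i,j\in A_{\pi}}}U(x_{i},x_{j})$ and likewise $U^{(N_{2})}_{B_{\pi}}$. For a fixed pair $\{i,j\}$ the number of splittings with $\{i,j\}\subset A_{\pi}$ is $\binom{N-2}{N_{1}-2}$ and with $\{i,j\}\subset B_{\pi}$ is $\binom{N-2}{N_{2}-2}$, and one checks the elementary identity
$$\frac{1}{\binom{N}{N_{1}}}\left(\frac{\binom{N-2}{N_{1}-2}}{N_{1}-1}+\frac{\binom{N-2}{N_{2}-2}}{N_{2}-1}\right)=\frac{1}{N-1},$$
the collapse to $\tfrac{1}{N-1}$ using crucially that $N_{1}+N_{2}=N$. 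Consequently
$$U^{(N)}(x_{1},\dots,x_{N})=\frac{1}{\binom{N}{N_{1}}}\sum_{\pi}\Big(U^{(N_{1})}_{A_{\pi}}+U^{(N_{2})}_{B_{\pi}}\Big),$$
that is, $U^{(N)}$ is the uniform average over splittings of the ``split Hamiltonians''.

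The remainder is routine. Jensen's inequality for the convex map $t\mapsto e^{\beta t}$ gives $\exp(\beta U^{(N)})\leq\binom{N}{N_{1}}^{-1}\sum_{\pi}\exp(\beta U^{(N_{1})}_{A_{\pi}})\exp(\beta U^{(N_{2})}_{B_{\pi}})$; integrating this against $\tau^{\otimes N}$ and using that $U^{(N_{1})}_{A_{\pi}}$ depends only on the $A_{\pi}$-variables and $U^{(N_{2})}_{B_{\pi}}$ only on the $B_{\pi}$-variables, Tonelli's theorem (the integrand is nonnegative) factors each summand as a product over the two disjoint blocks, and relabelling coordinates identifies each product with $\mathcal{M}^{(N_{1})}(\beta)\,\mathcal{M}^{(N_{2})}(\beta)$. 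Summing the $\binom{N}{N_{1}}$ equal terms yields the submultiplicativity, hence the lemma. The one step I expect to require genuine care is verifying the combinatorial coefficient above --- equivalently, that the partition-average of the split Hamiltonians reproduces $U^{(N)}$ with precisely the $\tfrac{1}{N-1}$ normalization --- while a secondary bookkeeping point is to keep all partition functions finite, handled by restricting to large $N_{1},N_{2}$ (or by noting the inequality is vacuous whenever a right-hand factor is $+\infty$).
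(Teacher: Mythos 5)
Your argument is correct, and the key combinatorial identity does check out: with $|A_{\pi}|=N_{1}$, $|B_{\pi}|=N_{2}$ one gets per pair the weight $\frac{N_{1}}{N(N-1)}+\frac{N_{2}}{N(N-1)}=\frac{1}{N-1}$, so the splitting average reproduces $U^{(N)}$ exactly, and Jensen plus Tonelli then give $\mathcal{M}^{(N)}(\beta)\leq\mathcal{M}^{(N_{1})}(\beta)\,\mathcal{M}^{(N_{2})}(\beta)$, which via (\ref{infval}) and $N=N_{1}+N_{2}$ is the lemma. This is, however, a genuinely different route from the paper's. The paper stays at the level of the free-energy functional: it writes $\mathcal{F}^{(N)}(\beta)=\mathcal{F}^{(N)}_{\beta}(\mu^{(N)})$, uses the symmetry of $U$ and of $\mu^{(N)}$ to express the energy as $\frac{\beta}{2}N\hat{\mu}^{(N)}_{2}(U)$, splits the entropy by the strong sub-additivity property of Section 2.2, $\mathcal{S}^{(N)}(\mu^{(N)})\leq\mathcal{S}^{(N_{1})}(\mu^{(N)}_{N_{1}})+\mathcal{S}^{(N_{2})}(\mu^{(N)}_{N_{2}})$, and then recognizes the resulting expression as $\mathcal{F}^{(N_{1})}_{\beta}(\mu^{(N)}_{N_{1}})+\mathcal{F}^{(N_{2})}_{\beta}(\mu^{(N)}_{N_{2}})$, bounded by $\mathcal{F}^{(N_{1})}(\beta)+\mathcal{F}^{(N_{2})}(\beta)$ through the variational characterization of Lemma \ref{min} (the marginals of $\mu^{(N)}$ serve as competitors). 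What each buys: your proof is more elementary and self-contained, needing only convexity of the exponential and Fubini--Tonelli (in the spirit of the estimate already used in Proposition \ref{propp}), and it bypasses any entropy bookkeeping; the paper's proof reuses the probabilistic machinery (entropy sub-additivity, the Gibbs variational principle) that is in any case needed later for Proposition \ref{pconv}, and it avoids the combinatorial identity. One small point of agreement between the two: both arguments really operate with $N_{1},N_{2}\geq 2$ (for $N_{1}=1$ your per-pair coefficient degenerates to $\frac{1}{N}$, and in the paper's computation the energy term $\frac{\beta}{2}(N_{1}+N_{2})\hat{\mu}^{(N)}_{2}(U)$ no longer matches the sum of the marginal energies since $U^{(1)}\equiv 0$); as you note, this is harmless because Proposition \ref{prop1} only requires sub-additivity along sufficiently large indices together with the bounds of Lemma \ref{lem34}.
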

{\it Proof:}

We set $N=N_{1}+N_{2}$, then we have
\begin{align}
\mathcal{F}^{(N)}_{\beta}(\mu^{(N)})&=\mathcal{S}^{(N)}(\mu^{(N)})+\frac{\beta}{2} N\hat{\mu}^{(N)}_{2}(U(x,y))\notag\\
&\leq \mathcal{S}^{(N_{1})}(\mu^{(N)}_{N_{1}})+\mathcal{S}^{(N_{2})}(\mu^{(N)}_{N_{2}})+\frac{\beta}{2}(N_{1}+N_{2})\hat{\mu}^{(N)}_{2}(U(x,y))\notag\\
&\leq \mathcal{F}^{(N_{1})}(\beta)+\mathcal{F}^{(N_{2})}(\beta),\notag
\end{align}
where in the first equation, we used the symmetry of $U$ and $\mu^{(N)}$ and in the second inequality the sub-additivity of the entropy $\mathcal{S}$.
\hfill$\Box$

The boundedness from below and the sub-additivity provided by Lemma \ref{lem34} and \ref{lem35}, insure the result of Proposition \ref{prop1}.

\subsection{Integrability}

The objective now is to show compactness (in the weak sense) of the sequence $(\mu^{(N)}_{n})_{N}$. In order to do that, we need to show a uniform $L^{p}$-boundedness for the sequence in question. We claim that
\begin{proposition}\label{propb}
There exists a constant $K(n,\beta \gamma)$ such that
$$\mu^{(N)}_{n}(dx_{1}\cdots dx_{n})\leq K(n,\gamma \beta)\exp{\left(\frac{\beta}{N-1}\sum_{1\leq i<k \leq n}U(x_{i},x_{j})\right)}\tau^{\otimes n}.$$
\end{proposition}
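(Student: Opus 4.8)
The plan is to estimate the $n$-th marginal of $\mu^{(N)}$ directly from its defining formula \eqref{min}, by isolating the pair interactions among the first $n$ particles and bounding the contribution of the remaining $N-n$ variables uniformly in their positions. Write the exponent $U^{(N)}(x_1,\dots,x_N)=\frac{1}{N-1}\sum_{1\le i<j\le N}U(x_i,x_j)$ as the sum of three pieces: the ``internal'' energy $\frac{1}{N-1}\sum_{1\le i<j\le n}U(x_i,x_j)$ (which will appear verbatim on the right-hand side of the claimed bound), the ``cross'' energy $\frac{1}{N-1}\sum_{i\le n<j}U(x_i,x_j)$, and the ``external'' energy $\frac{1}{N-1}\sum_{n<i<j}U(x_i,x_j)$. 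Integrating out $x_{n+1},\dots,x_N$ then amounts to controlling
$$
\mathcal{M}^{(N)}(\beta)^{-1}\int_{M^{N-n}}\exp\!\Big(\tfrac{\beta}{N-1}\!\!\sum_{i\le n<j}\!\!U(x_i,x_j)+\tfrac{\beta}{N-1}\!\!\sum_{n<i<j}\!\!U(x_i,x_j)\Big)\prod_{l>n}\tau(dx_l),
$$
and showing this is $\le K(n,\beta\gamma)$ independently of $x_1,\dots,x_n$ and of $N$.

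First I would handle the cross term. Since $U(x,y)=-\gamma\ln|xy^{-1}|+\tilde{\mathcal H}(x,y)$ with $\tilde{\mathcal H}$ bounded, each factor $e^{\frac{\beta}{N-1}U(x_i,x_j)}$ is, up to a uniformly bounded multiplicative constant, of the form $|x_iy_j^{-1}|^{-\beta\gamma/(N-1)}$; grouping the $n$ such factors attached to a fixed $x_j=y$ and using Hölder (or the pointwise bound $\sum \le n\max$ together with convexity of the exponential, exactly as in Proposition \ref{propp}), the $x_j$-integral of the cross contribution is bounded by a constant depending only on $n$ and $\beta\gamma$, provided $n\beta\gamma/(N-1)<8$, i.e. for $N$ large. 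Raising to the power $N-n$ and comparing with the same structure inside $\mathcal{M}^{(N)}(\beta)$ is the mechanism that produces the bound; the key point is that the external energy among $x_{n+1},\dots,x_N$ together with the normalization $\mathcal{M}^{(N)}(\beta)$ essentially reconstitutes the partition function $\mathcal{M}^{(N-n)}$ of the smaller system, and the ratio $\mathcal{M}^{(N-n)}(\beta)(\mathcal{M}^{(1)})^{n}/\mathcal{M}^{(N)}(\beta)$ stays bounded by Proposition \ref{prop1} (the existence and finiteness of $\lim \frac1N\mathcal{F}^{(N)}(\beta)$, equivalently of $\frac1N\ln(\mathcal{M}^{(N)}(\beta)/(\mathcal{M}^{(1)})^N)$, forces $\mathcal{M}^{(N)}(\beta)/\mathcal{M}^{(N-n)}(\beta)\asymp (\mathcal{M}^{(1)})^{n}$ up to bounded factors). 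This is the step I would present carefully: re-index the integral defining $\mu^{(N)}_n$, insert and remove the internal energy of the first $n$ particles, bound the cross integral, and recognize the residual ratio of partition functions.

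The main obstacle is making the comparison of partition functions uniform in $N$ without circularity: one wants $\mathcal{M}^{(N)}(\beta)\gtrsim \mathcal{M}^{(N-n)}(\beta)(\mathcal{M}^{(1)})^{n}$ (lower bound on the denominator) and a matching upper bound on the numerator of the cross-term integral. The lower bound I would get from Jensen's inequality (as in Lemma \ref{lem34}), which gives $\mathcal{M}^{(N)}(\beta)\ge (\mathcal{M}^{(1)})^{N}\exp(\frac{N}{2}\beta\hat\mu^{(1)\otimes2}(U))$; combined with the corresponding expression for $N-n$ this yields exactly a ratio bounded below by a constant depending on $n$ and $\beta$ (note $\hat\mu^{(1)\otimes2}(U)$ is finite since $\ln|xy^{-1}|$ is locally integrable and $Q$ is bounded). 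The upper bound on the cross integral is the Proposition \ref{propp}-type estimate above, valid for $N$ large. Putting these together gives the constant $K(n,\beta\gamma)$, finite precisely when $\beta\gamma<8$, and independent of $N$ for $N\ge N_0(n,\beta\gamma)$; one absorbs the finitely many small $N$ into the constant as well. I would also remark that the same bound with the internal-energy factor on the right-hand side is what will later be fed into an $L^p$ estimate: since $e^{\frac{\beta}{N-1}\sum_{i<j\le n}U(x_i,x_j)}\to 1$ pointwise a.e. and is $L^p(\tau^{\otimes n})$-bounded for $p<8/(\beta\gamma)$ by the local integrability of powers of $\ln|xy^{-1}|$, the marginals $\mu^{(N)}_n$ are uniformly bounded in $L^p(\tau^{\otimes n})$ for such $p$.
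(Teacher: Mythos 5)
Your decomposition of $(N-1)U^{(N)}$ into internal, cross and external parts, and your treatment of the cross term via H\"older exactly as in Proposition \ref{propp}, coincide with the paper's argument. The gap is in the step you describe as ``recognizing the residual ratio of partition functions.'' After H\"older with exponents $q=\frac{N-1}{2n}$ and $q'=\frac{N-1}{N-2n-1}$, the external factor is \emph{not} $\mathcal{M}^{(N-n)}(\beta)$: it is $\mathcal{M}^{(N-n)}\bigl(\beta\frac{N-n-1}{N-2n-1}\bigr)^{1-\frac{2n}{N-1}}$, i.e.\ the partition function of the reduced system at a strictly \emph{larger} inverse temperature (H\"older necessarily inflates the exponent, and this cannot be avoided by a pointwise bound on the cross term, which is unbounded near coincidence points). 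So the quantity you must control is $\mathcal{M}^{(N-n)}\bigl(\beta\frac{N-n-1}{N-2n-1}\bigr)/\mathcal{M}^{(N)}(\beta)$, a comparison of partition functions at \emph{different} temperatures. Your proposal never addresses this shift: Jensen (as in Lemma \ref{lem34} or the paper's estimate $\mathcal{M}^{(N)}(\beta)\geq C(n,\beta)\,\mathcal{M}^{(N-n)}\bigl(\frac{N-n-1}{N-1}\beta\bigr)$) only produces a lower bound on $\mathcal{M}^{(N)}(\beta)$ in terms of $\mathcal{M}^{(N-n)}$ at a \emph{smaller} inverse temperature, and combining two Jensen \emph{lower} bounds, as you suggest, cannot bound a ratio from above. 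This is precisely why the paper inserts an extra argument: it introduces the Gibbs density $\rho^{(N-n)}$, applies Jensen with respect to it to express the ratio $\mathcal{M}^{(N-n)}\bigl(\frac{N-n-1}{N-1}\beta\bigr)/\mathcal{M}^{(N-n)}\bigl(\frac{N-n-1}{N-2n-1}\beta\bigr)$ through $\partial_\beta$ of the free energy, and then uses convexity of $\beta\mapsto\frac1N\mathcal{F}^{(N)}(\beta)$ (hence of $f$) and the monotonicity of its derivative to get an $N$-independent bound $\exp\bigl(-2n\beta\,\partial_\beta^{+}f(\beta_0)\bigr)$.

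A second, related flaw: you assert that the existence of $\lim_N\frac1N\mathcal{F}^{(N)}(\beta)$ (Proposition \ref{prop1}) ``forces'' $\mathcal{M}^{(N)}(\beta)/\mathcal{M}^{(N-n)}(\beta)\asymp(\mathcal{M}^{(1)})^{n}$. Convergence of the per-particle free energy does not control the increments $\mathcal{F}^{(N)}(\beta)-\mathcal{F}^{(N-n)}(\beta)$; subadditivity (Lemma \ref{lem35}) gives only the inequality $\mathcal{M}^{(N)}(\beta)\leq \mathcal{M}^{(N-n)}(\beta)\,(\mathcal{M}^{(1)})^{n}e^{\mathcal{F}^{(n)}(\beta)}$, which is the wrong direction for your purposes, and in any case an equal-temperature comparison would not suffice because of the H\"older shift described above. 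So while your outline reproduces the easy parts of the paper's proof, the genuinely delicate step --- the uniform-in-$N$ comparison of partition functions at shifted temperatures via convexity of the free energy --- is missing, and the substitute you propose does not follow from the results you cite.
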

{\it Proof:}
First, we write $(N-1)U^{(N)}=W^{(n)}+W^{(n,N-n)}+W^{(N-n)}$. Here, $W^{(n)}$ is the term involving $(x_{1},\cdots,x_{n})$, $W^{(N-n)}$ is the term involving $(x_{n+1},\cdots, x_{N}),$ and finally the term $W^{(n,N-n)}$ contains the mixed remaining variables. First notice that $\frac{1}{N-1}W^{(n)}\to 0$ as $N\to \infty$, hence $e^{\frac{\beta}{N-1}W^{(n)}}\in L^{p}(M^{n})$ for $N$ big enough.\\
 
Next, we move to the term $W^{(n,N-n)+W^{(N-n)}}$. Indeed, we take $q=\frac{N-1}{2n}$ and $q'=\frac{N-1}{N-1-2n}$ and using H\"{o}lder's inequality we get
\begin{align}
\left\|\exp{\left(\frac{\beta}{N-1}\left[W^{(n,N-n)}+W^{(N-n)}\right]\right)}\right\|_{L^{1}(M^{N})}\leq& \left\|\exp{\left(\frac{\beta}{N-1}W^{(n,n-N)}\right)}\right\|_{L^{q}(M^{N})}\times\notag\\
&\left\|\exp{\left(\frac{\beta}{N-1}W^{(n-N)}\right)}\right\|_{L^{q'}(M^{N})}.\notag
\end{align}
The first integral can be bounded the same way as in Proposition \ref{propp} and the fact that 
\begin{align}
\left\|\exp{\left(\frac{\beta}{N-1}\sum_{k=1}^{n}U(x_{k},x)\right)}\right\|_{L^{q}(M)}^{N-n}&= \left\|\exp{\left(\frac{\beta}{N-1}(-\gamma\sum_{k=1}^{n}\ln|x_{k}x^{-1}|\chi_{B_{1}(x_{k})}+\tilde{H}(x))\right)}\right\|_{L^{q}(M)}^{N-n}\notag\\
&\leq C^{\frac{N-n}{N-1}}\left\|\frac{1}{|x|^{\frac{n\gamma\beta}{N-1}}}\right\|_{L^{q}(B_{1}(0))}^{N-n}\notag\\
&\leq C(n)\left\|\frac{1}{|x|^{\frac{\gamma \beta}{2}}}\right\|_{L^{1}(B_{1}(0))}^{\frac{2n(N-n)}{N-1}}.
\end{align}

Next we deal with the second term, namely $\|\exp(\frac{\beta}{N}W^{(n-N)})\|_{L^{q'}}$, where $q'=\frac{N-1}{N-2n-1}$. This can be written as:
$$\left\|\exp{\left(\frac{\beta}{N-1}W^{(n-N)}\right)}\right\|_{L^{q'}(M^{N})}=\mathcal{M}^{(N-n)}\left(\beta\frac{N-n-1}{N-2n-1}\right)^{1-\frac{2n}{N-1}}.$$
Notice that since $\lim_{N\to \infty}\frac{1}{N}\mathcal{F}^{(N)}(\beta)$ exists, we have that $\mathcal{M}^{(N-n)}\left(\beta \frac{N-n-1}{N-2n-1}\right)^{-\frac{2n}{N-1}}$ is uniformly bounded. Hence, it remains to bound $\mathcal{M}^{(N-n)}\Big(\beta\frac{N-n-1}{N-2n-1}\Big)$. Using Jensen's inequality with respect to the measure $d\mu^{(1)\otimes n}$, we have 
\begin{align}\mathcal{M}^{(N)}(\beta)&\geq \left(\mathcal{M}^{(1)}\right)^{n}\exp{\left(\frac{n(2N-n-1)}{N-1}\beta\mu^{(1)\otimes 2}(U(x,y))\right)}\mathcal{M}^{(N-n)}\left(\frac{N-n-1}{N-1}\beta\right)\notag\\
&\geq C(n,\beta)\mathcal{M}^{(N-n)}\left(\frac{N-n-1}{N-1}\beta\right).\notag
\end{align}
We now consider the density $\rho^{(N-n)}$ defined by $$\rho^{(N-n)}=\frac{\exp{\left(\beta \frac{1}{N-2n-1}W^{(N-n)}\right)}}{\mathcal{M}^{(N-n)}\left(\frac{N-n-1}{N-2n-1}\beta\right)}.$$
We will write $\left\langle X \right\rangle _{N}$ the average of $X$ with respect to the density $\rho^{(N-n)}$ and the measure $\tau^{\otimes(N-n)}$. Therefore, we have
\begin{align}
\frac{\mathcal{M}^{(N-n)}\left(\frac{N-n-1}{N-1}\beta\right)}{\mathcal{M}^{(N-n)}\left(\frac{N-n-1}{N-2n-1}\beta\right)}&=\left \langle \exp{\left(-\frac{2n}{(N-1)(N-2n-1)}\beta W^{(N-n)}\right)} \right \rangle_{N}\notag\\
&\geq \exp{\left(-\frac{2n}{(N-1)(N-2n-1)}\left\langle \beta W^{(N-n)}\right\rangle_{N}\right)}\notag\\
&=\exp{\left(-2n\beta\partial_{\beta}\left(\frac{1}{N-1}\mathcal{F}^{(N-n)}\left(\frac{N-n}{N-2n-1}\beta\right)\right)\right)}.\notag
\end{align}
But recall that since $\beta \mapsto \frac{1}{N}\mathcal{F}^{(N)}(\beta)$ is convex (it is easily verified by taking two derivatives), the function $\beta \mapsto f(\beta)$ is also convex. In particular, its derivative exists almost everywhere and it is non-decreasing. So, for $\beta_{0} \in (\beta, \frac{4}{\gamma})$, we have that
$$\frac{\mathcal{M}^{(N-n)}\left(\frac{N-n-1}{N-1}\beta\right)}{\mathcal{M}^{(N-n)}\left(\frac{N-n-1}{N-2n-1}\beta\right)}\geq C\exp{\left(-2n\beta \partial_{\beta}^{+}(f(\beta_{0}))\right)},$$
and this finishes the proof.
\hfill$\Box$

The previous proposition states that $\mu^{(N)}_{n}$ has a density with respect to $d\tau^{n}$ (or $d\mu^{(1)\otimes n}$), in $L^{p}(M^{n})$ for all $p\geq 1$. In particular the sequence $(\mu^{(N)}_{n})_{N}$ is weakly compact in the space $P(M^{n})\cap L^{p}(M^{n})$. We want to characterize the limit points.
\begin{proposition}\label{pconv}
 Let us consider a weakly convergent subsequence $\mu^{(a(N))}_{n}$ that converges weakly to a limit point say $\mu(\beta)\in P_{s}(\Omega)$. Then the decomposition measure of $\mu(\beta)$ is concentrated at the maximizers of $\mathcal{F}^{(1)}_{\beta}$.
\end{proposition}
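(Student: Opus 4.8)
The strategy is to close the chain
$$f(\beta)\ \le\ \mathcal S(\mu(\beta))+\beta\,e(\mu(\beta))\ =\ \int_{P_{\mathcal E}(M)}\mathcal F^{(1)}_{\beta}(\varrho)\,\nu(d\varrho\,|\,\mu(\beta))\ \le\ \sup_{\varrho}\mathcal F^{(1)}_{\beta}(\varrho)\ \le\ f(\beta),$$
where $\mathcal F^{(1)}_{\beta}(\varrho)=\mathcal S^{(1)}(\varrho)+\beta\,\mathcal E(\varrho)$ is the mean-field free energy, $f(\beta)=\lim_{N}\tfrac1N\mathcal F^{(N)}(\beta)$ is the limit of Proposition \ref{prop1}, and $\nu(\cdot\,|\,\mu(\beta))$ is the Hewitt--Savage decomposition measure of $\mu(\beta)$. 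The middle equality is the representation of the mean entropy and the mean energy from Section 2.2, the next inequality is ``average $\le$ supremum'', and the first and last inequalities are the two genuine estimates described below. Once the chain closes, every inequality is an equality; in particular $\int\mathcal F^{(1)}_{\beta}\,d\nu=\sup_{\varrho}\mathcal F^{(1)}_{\beta}(\varrho)$, and since the integrand never exceeds the supremum, it must equal the supremum for $\nu(d\varrho\,|\,\mu(\beta))$-a.e.\ $\varrho$. That is exactly the assertion, and en route it shows that $\mathcal F^{(1)}_{\beta}$ has maximizers.

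For the lower bound $f(\beta)\ge\sup_{\varrho}\mathcal F^{(1)}_{\beta}(\varrho)$, I would test the variational characterization of $\mathcal F^{(N)}(\beta)$ in Lemma \ref{min} against a product competitor $\varrho^{\otimes N}$, for any admissible $\varrho$: by exchangeability $\int_{M^{N}}U^{(N)}\,d\varrho^{\otimes N}=N\,\mathcal E(\varrho)$ and $\mathcal S^{(N)}(\varrho^{\otimes N})=N\,\mathcal S^{(1)}(\varrho)$, so $\tfrac1N\mathcal F^{(N)}(\beta)\ge\mathcal F^{(1)}_{\beta}(\varrho)$; letting $N\to\infty$ and taking the supremum over $\varrho$ finishes this direction. (The de Finetti atoms of $\mu(\beta)$ are themselves admissible, since the pointwise bounds $\mu^{(N)}_{n}\le K(n,\gamma\beta)\,\tau^{\otimes n}$ underlying Proposition \ref{propb}, passed to the limit $N\to\infty$, descend to the atoms.)

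The substantive part is the upper bound $f(\beta)\le\mathcal S(\mu(\beta))+\beta\,e(\mu(\beta))$. Writing $\tfrac1N\mathcal F^{(N)}(\beta)=\tfrac1N\mathcal S^{(N)}(\mu^{(N)})+\tfrac\beta2\,\hat\mu^{(N)}_{2}(U)$ and working along the convergent subsequence $a(N)$, I would treat the two terms separately. For the entropy, monotonic decrease of $n\mapsto\tfrac1n\mathcal S^{(n)}$ gives $\tfrac1{a(N)}\mathcal S^{(a(N))}(\mu^{(a(N))})\le\tfrac1n\mathcal S^{(n)}(\mu^{(a(N))}_{n})$ for each fixed $n$, weak upper semicontinuity of $\mathcal S^{(n)}$ (i.e.\ lower semicontinuity of the relative entropy against the fixed reference $\mu^{(1)\otimes n}$) gives $\limsup_{N}\mathcal S^{(n)}(\mu^{(a(N))}_{n})\le\mathcal S^{(n)}(\mu_{n}(\beta))$, and letting $n\to\infty$ together with the representation of $\mathcal S(\mu(\beta))$ yields $\limsup_{N}\tfrac1{a(N)}\mathcal S^{(a(N))}(\mu^{(a(N))})\le\mathcal S(\mu(\beta))$. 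For the energy, note that $U=-\gamma\ln|xy^{-1}|+\tilde{\mathcal H}$ is bounded below on the compact $M$ and carries only a logarithmic (hence $L^{q}$ for every finite $q$) singularity on the diagonal; truncating $U$ above at level $R$ makes it bounded and continuous, so $\hat\mu^{(a(N))}_{2}(\min(U,R))\to\hat\mu_{2}(\min(U,R))$ by weak convergence, while H\"older's inequality together with the uniform $L^{p}$ bounds on the two-point marginals from Proposition \ref{propb} makes the tails $\hat\mu^{(a(N))}_{2}\big((U-R)^{+}\big)$ small uniformly in $N$ as $R\to\infty$; this uniform integrability forces $\hat\mu^{(a(N))}_{2}(U)\to\hat\mu_{2}(U)=2\,e(\mu(\beta))<\infty$. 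Finally, since $U$ is bounded below, Tonelli gives $\int\mathcal E(\varrho)\,\nu(d\varrho\,|\,\mu(\beta))=\tfrac12\hat\mu_{2}(U)<\infty$, so $\nu(\cdot\,|\,\mu(\beta))$ is concentrated on $P_{\mathcal E}(M)$ and the representation formulas used above apply.

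I expect the upper bound to be the main obstacle: it requires a double interchange of the $N\to\infty$ and $n\to\infty$ limits in the entropy term --- legitimised by the monotonicity, sub-additivity and representation package of Section 2.2 --- together with taming the logarithmic singularity of $U$ in the energy term, which is exactly the role of the uniform $L^{p}$-estimates of Proposition \ref{propb}. Once $f(\beta)=\sup_{\varrho}\mathcal F^{(1)}_{\beta}(\varrho)$ and the identification $\mathcal S(\mu(\beta))+\beta\,e(\mu(\beta))=\int\mathcal F^{(1)}_{\beta}\,d\nu$ are in place, the concentration of the decomposition measure on the maximizers of $\mathcal F^{(1)}_{\beta}$ is a soft consequence of the elementary fact that a mean which equals its essential supremum is almost surely equal to that supremum.
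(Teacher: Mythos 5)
Your proposal follows essentially the same route as the paper: the identical sandwich $f(\beta)\le \mathcal{S}(\mu(\beta))+\beta e(\mu(\beta))=\int \mathcal{F}^{(1)}_{\beta}\,d\nu\le \sup_{\varrho}\mathcal{F}^{(1)}_{\beta}(\varrho)\le f(\beta)$, with the last inequality obtained from product competitors $\varrho^{\otimes N}$ in the variational characterization of Lemma \ref{min} and the first from entropy sub-additivity plus upper semicontinuity along the subsequence, followed by the observation that an average equal to the supremum forces concentration on the maximizers. Your explicit truncation and uniform-integrability treatment of the energy term via the $L^{p}$ bounds of Proposition \ref{propb} is a detail the paper leaves implicit, but the argument is the same.
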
 
{\it Proof:}
Recall that
\begin{align}
\mathcal{F}_{\beta}(\mu)&=\lim_{n \to \infty}\frac{1}{n}\mathcal{F}_{\beta}^{(n)}(\mu_{n})\notag\\
&=\int_{P_{\mathcal{E}}(M)}\mathcal{S}^{(1)}(\rho)+\frac{\beta}{2}\hat{\rho}^{\otimes 2}(U(x,y))\nu(d\rho|\mu)
\end{align}
In particular, if we set $$A_{\beta}=\sup_{\rho \in P_{\mathcal{E}}(M)}\mathcal{S}^{(1)}(\rho)+\frac{\beta}{2}\hat{\rho}^{\otimes 2}(U(x,y))=\sup_{\rho\in P_{\mathcal{E}}(M)}\mathcal{F}^{(1)}(\beta),$$ then one has
$$\sup_{\mu \in P^{s}(M)}\mathcal{F}_{\beta}(\mu)\leq A_{\beta}.$$
On the other hand, we have
\begin{align}
\mathcal{F}^{(N)}(\beta)&=\mathcal{F}_{\beta}^{(N)}\left(\mu^{(N)}\right)\geq \mathcal{F}_{\beta}^{(N)}\left(\rho^{\otimes N}\right)\notag\\
&\geq N\left(\mathcal{S}^{(1)}(\rho)+\frac{\beta}{2}\rho^{\otimes 2}(U(x,y))\right).
\end{align}
Hence, $$f(\beta)\geq A_{\beta}.$$
Next, we write $\alpha(N)=n\left \lfloor \frac{\alpha(N)}{n}\right \rfloor+m$ and using the sub-additivity of the entropy $\mathcal{S}$, we have 
\begin{align}
\mathcal{S}^{(\alpha(N))}(\mu^{(\alpha(N))})&\leq \left \lfloor \frac{\alpha(N)}{n}\right \rfloor \mathcal{S}^{(n)}(\mu_{n}^{(\alpha(N))})+\mathcal{S}^{(m)}(\mu_{m}^{(\alpha(N))})\notag\\
&\leq \left\lfloor \frac{\alpha(N)}{n}\right \rfloor \mathcal{S}^{(n)}(\mu_{n}^{(\alpha(N))})\notag
\end{align}
Using the upper-semicontinuity of the Entropy, we have 
$$\limsup_{N\to \infty}\mathcal{S}^{(n)}(\mu_{n}^{\alpha(N)})\leq \mathcal{S}^{(n)}(\mu_{n}(\beta)).$$
Hence,
\begin{align}
\limsup_{N\to \infty}\frac{1}{\alpha(N)}\mathcal{S}^{(\alpha(N))}(\mu^{\alpha(N)})&\leq \limsup \frac{1}{\alpha(N)}\left \lfloor \frac{\alpha(N)}{n}\right \rfloor \mathcal{S}^{(n)}(\mu_{n}^{(\alpha(N))})\notag\\
&\leq \frac{1}{n}\mathcal{S}^{(n)}(\mu_{n}(\beta))\notag
\end{align}
Therefore, if we let $n\to \infty$, we have
$$\limsup_{N\to \infty}\frac{1}{\alpha(N)}\mathcal{S}^{(\alpha(N))}(\mu^{\alpha(N)})\leq \mathcal{S}(\mu(\beta))$$
In particular
\begin{align}
f(\beta)&=\limsup \frac{1}{\alpha(N)}\mathcal{F}_{\beta}^{(\alpha(N))}(\mu^{(\alpha(N))})\notag\\
&\leq \mathcal{F}_{\beta}(\mu(\beta))\notag\\
&\leq \sup_{\mu\in P^{s}(\Omega)}\mathcal{F}_{\beta}(\mu)\notag
\end{align}
Therefore, $A_{\beta}=f(\beta)=\mathcal{F}_{\beta}(\mu(\beta))$.

Thus the limiting points concentrate at the maximizers of $A_{\beta}$. Hence, $A_{\beta}=\max_{\rho\in P_{\mathcal{E}}(M)}\mathcal{F}_{\beta}^{(1)}(\rho).$\\
In fact, one can see that the decomposition measure is actually concentrated on measures with density that is in $L^{p}(M)$ for all $p>1$.
\hfill$\Box$

Now to finish the proof of Theorem \ref{pthm}, we notice that as a consequence of Proposition \ref{pconv}, the maximization problem 
$$f(\beta)=\sup \{\mathcal{F}_{\beta}^{(1)}; \rho \in P(M)\cap L^{1}Log(L)(M)\}$$ 
has a solution and thus the solution satisfies the Euler-Lagrange equation 
$$\rho_{\beta}(x)=\frac{Qe^{\beta\int_{M}U(x,y)\rho_{\beta}(y)dy}}{\int_{M}Q e^{\beta\int_{M}U(x,y)\rho_{\beta}(y)dy}dx}.$$
The fact that $\rho_{\beta}\in L^{p}(M)$ follows from the regularity result of the density of the sequence $\mu_{n}^{(N)}$.

\subsection{Proof of the Main result}
Using Theorem \ref{pthm}, we take $u=\frac{\beta}{2}\int_{M}U(x,y)\rho_{\beta}(y)dy+c$, where $c$ is a constant to be determined later. Then we have that
$$\overline{P}'u=\frac{\beta}{2}\Big[\rho_{\beta}^{T}-\frac{\overline{Q}'}{\int_{M}\overline{Q}'dx}\Big],$$
where $\rho_{\beta}=\rho_{\beta}^{T}+\rho_{\beta}^{\perp}$ and $\rho_{\beta}^{T}=\Gamma(\rho_{\beta})$. Thus
$$\overline{P}'u+\frac{\beta}{2}\frac{\overline{Q}'}{\int_{M}\overline{Q}'dx}=\frac{\beta}{2}\lambda Qe^{2u}-\frac{\beta}{2}\rho_{\beta}^{\perp},$$
where $\lambda=\int_{M}Q e^{\beta\int_{M}U(x,y)\rho_{\beta}(y)dy}dx$. Since $\beta \gamma \in [0,8)$, and $\int_{M}\overline{Q}'dx<16\pi^{2}$, one can pick $\beta=2\int_{M}\overline{Q}'dx$  and $e^{2c}=\lambda$, to obtain a solution to 
$$\overline{P}'u+\overline{Q}'= Qe^{2u}\text{ mod } \mathcal{P}^{\perp}.$$

\section{Case of the Heisenberg group}
In this section we will extend the previous result to the non-compact case of the Heisenberg group. Notice that the estimates in the previous section rely on the compactness of the manifold $M$, so we need to adapt them to our new setting. We will be following the procedure developed in \cite{CK} and \cite{Ma} for the Euclidean case.
From now on we fix a "biharmonic" and pluriharmonic function $K$. That is $K$ satisfies
$$(\Delta_{b})^{2}K=0 \quad \text{ and } T^{2}K=0.$$
One such function would be $K(x,y,t)=-(x^{2}+y^{2})$, but also one could think of a more complicated functions. We also consider the following two assumptions on $K$ and $Q$:

\begin{itemize}
\item[a)] For all $0<q<4$, we have $\int_{B_{1}(x)}\frac{Q(y)e^{2K(y)}}{|xy^{-1}|^{q}}dy\to 0$ as $x\to \infty$.
\item[b)]There exists $s\geq 0$ such that $\int_{\mathbb{H}}Q(x)e^{2K(x)}|x|^{s}dx<\infty$
\end{itemize} 
These assumptions will guarantee that the mass does not escape to infinity. An explicit computation done in \cite{WY} shows that the Green's function of the operator $P'$ or $\bP$ has the explicit form $G(x,y)=-\frac{1}{4\pi^{2}}\ln(|xy^{-1}|)$ and 
$$\bP G(\cdot,y)=Re S(\cdot, y)$$
where $S$ is the Szego kernel. Therefore, we will take $U(x,y)=G(x,y)$. For the sake of notation, we will remove the factor $\frac{1}{4\pi^{2}}$ in the definition of $U$. The measure $\tau$ defined in $(\ref{tau})$ will be replaced by
$$\tau(dx)=e^{2K(x)}Q(x)dx.$$
Notice that from the assumption $(b)$, we have that the mass $\mathcal{M}^{(1)}$ of $\tau$ is finite and hence the probability measure $\mu^{(1)}$ is still well defined. The Hamiltonian $U^{(N)}$ then can be written as

$$U^{(N)}(x_{1},\cdots, x_{N})=-\ln(|R^{(N)}|^{\frac{1}{N-1}})$$
where $R^{(N)}=\Pi_{1\leq i<j\leq N}|x_{i}x_{j}^{-1}|$. The definition of the entropy and the energy will remain unchanged. So as in Lemma \ref{min}, we have that $\mathcal{F}_{\beta}^{(N)}$ has a unique minimizer $\mu^{(N)}$ that can be written as
$$\mu^{(N)}=\frac{1}{\mathcal{M}^{(N)}(\beta)}|R^{(N)}|^{-\frac{\beta}{N-1}}.$$
For the well definedness of $\mu^{(N)}$ one needs to show that $\mathcal{M}^{(N)}(\beta)$ is finite.
\begin{lemma}
The measure $\mu^{(N)}$ is absolutely continuous with respect to the measure $\tau^{\otimes N}$. Moreover, $\frac{d\mn}{d\tau^{\otimes N}}\in L^{p}(\H^{N})$ for $p\in [1,\frac{8}{\beta})$, for $N$ large enough.
\end{lemma}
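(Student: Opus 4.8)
The plan is to mimic the compact-case argument of Proposition \ref{propp}, but now the non-compactness forces us to use assumptions $a)$ and $b)$ to control the behavior of the integrand at infinity; the logarithmic kernel is handled near the diagonal exactly as before, since locally $U(x,y)=-\gamma\ln|xy^{-1}|$ (with the normalization absorbed here). First I would establish the absolute continuity $d\mn \ll d\tau^{\otimes N}$: since $|R^{(N)}|^{-\beta/(N-1)}$ is a nonnegative measurable function on $\H^N$, once we know $\mathcal{M}^{(N)}(\beta)=\int_{\H^N}|R^{(N)}|^{-\beta/(N-1)}\,d\tau^{\otimes N}<\infty$ the density $\frac{d\mn}{d\tau^{\otimes N}}=\frac{1}{\mathcal{M}^{(N)}(\beta)}|R^{(N)}|^{-\beta/(N-1)}$ is well defined and $\tau^{\otimes N}$-integrable. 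So the whole lemma reduces to finiteness of $\mathcal{M}^{(N)}(p\beta)$ for the relevant range of $p$.

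Next I would run the convexity/symmetrization step from Proposition \ref{propp} verbatim: using the convexity of $\exp$ and permutation symmetry of $U^{(N)}$,
\begin{align*}
\mathcal{M}^{(N)}(p\beta)&=\int_{\H^{N}}\exp\big(p\beta U^{(N)}\big)\,d\tau^{\otimes N}\\
&\leq \frac{1}{N}\sum_{i=1}^{N}\int_{\H}\Big(\int_{\H}\exp\big(\tfrac{p\beta}{2}\tfrac{N}{N-1}U(x,y)\big)\,\tau(dy)\Big)^{N-1}\tau(dx).
\end{align*}
The inner integral in $y$ splits into the contribution from $B_1(x)$, where $\exp(\tfrac{p\beta}{2}\tfrac{N}{N-1}U(x,y))\lesssim |xy^{-1}|^{-p\beta\gamma N/(2(N-1))}$ and integrability against $\tau(dy)=e^{2K(y)}Q(y)\,dy$ follows from assumption $a)$ once $p\beta\gamma\tfrac{N}{N-1}<8$ — indeed $a)$ says this local integral tends to $0$ as $x\to\infty$, so it is bounded on all of $\H$ — and the contribution from $\H\setminus B_1(x)$, where $U(x,y)=-\gamma\ln|xy^{-1}|\leq 0$ so $\exp(\cdots)\leq 1$ and the integral is bounded by $\mathcal{M}^{(1)}<\infty$ thanks to $b)$. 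Thus the bracketed quantity is bounded by a constant $C$ uniformly in $x$, giving $\int_{\H}C^{N-1}\tau(dx)=C^{N-1}\mathcal{M}^{(1)}<\infty$.

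The subtlety I expect to be the real obstacle — the reason assumption $b)$ has the weight $|x|^s$ and the reason $a)$ is phrased as a decay statement rather than mere boundedness — is that the naive bound just described is not uniform enough to carry through the later marginal estimates (the analogue of Proposition \ref{propb}); one genuinely needs the ``$\to 0$ as $x\to\infty$'' in $a)$, together with the polynomial moment in $b)$, to show the mass of $\mu^{(N)}_n$ does not leak to infinity. For the present lemma, though, the decay in $a)$ is only used qualitatively (boundedness of the local integral), so the main care is bookkeeping the exponent $p\beta\gamma\tfrac{N}{N-1}<8$: choosing $N$ large enough, this holds for every $p<8/(\beta\gamma)$, hence (with the normalization $\gamma$ absorbed, so $8/\beta$) for every $p\in[1,\tfrac{8}{\beta})$, which is exactly the claimed range. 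I would close by remarking that $p=1$ gives finiteness of $\mathcal{M}^{(N)}(\beta)$ itself, securing the well-definedness of $\mu^{(N)}$ that was flagged just before the lemma statement.
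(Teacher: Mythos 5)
Your proposal is correct and follows essentially the same route as the paper: the arithmetic--geometric (convexity of $\exp$) symmetrization reduces $\mathcal{M}^{(N)}(p\beta)$ to the $(N-1)$-st power of $\sup_{x}\int_{\H}|xy^{-1}|^{-\frac{pN\beta}{2(N-1)}}\tau(dy)$, which is then split into $B_{1}(x)$ (handled by assumption $a)$, needing $\frac{pN\beta}{N-1}<8$) and its complement (bounded by $\mathcal{M}^{(1)}<\infty$ via assumption $b)$), exactly as in the paper's proof. The only cosmetic difference is that you keep the exponential form $\exp(p\beta U^{(N)})$ while the paper works with the equivalent product $|R^{(N)}|^{-p\beta/(N-1)}$, and your bookkeeping of the normalization $\gamma$ matches the paper's convention of dropping the factor $\tfrac{1}{4\pi^{2}}$.
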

{\it Proof:}
We have for $p\geq 1$
\begin{align}
\int_{\H^{N}}|R^{(N)}|^{-\frac{p\beta}{N-1}}\prod_{1\leq i\leq N}\tau(dx_{i})&\leq \frac{1}{N}\int_{\H^{N}}\sum_{i=1}^{N}\prod_{1\leq j\leq N;j\not=i}|x_{i}x_{j}^{-1}|^{-\frac{pN\beta}{2(N-1)}}\prod_{1\leq i\leq N}\tau(dx_{i})\notag\\
&\leq \int_{\H^{N}}\prod_{2\leq j\leq N}|x_{1}x_{j}^{-1}|^{-\frac{pN\beta}{2(N-1)}}\prod_{1\leq i\leq N}\tau(dx_{i})\notag\\
&\leq \int_{\H} \Big(\int_{\H}|xy^{-1}|^{-\frac{pN\beta}{2(N-1)}}\tau(dy)\Big)^{N-1}\tau(dx)\notag\\
&\leq \sup_{x\in \H}\Big(\int_{\H}|xy^{-1}|^{-\frac{pN\beta}{2(N-1)}}\tau(dy)\Big)^{N-1} \mathcal{M}^{(1)}\notag
\end{align}
where we used the arithmetic-geometric inequality in the second inequality. Now we have that
\begin{align}
\int_{\H}|xy^{-1}|^{-\frac{pN\beta}{2(N-1)}}\tau(dy)&=\int_{B_{1}(x)}|xy^{-1}|^{-\frac{pN\beta}{2(N-1)}}\tau(dy)+\int_{\H\setminus B_{1}(x)}|xy^{-1}|^{-\frac{pN\beta}{2(N-1)}}\tau(dy)\notag\\
&\leq g(x)+\mathcal{M}^{(1)}\notag
\end{align}
but using assumption $(a)$, we have that $g(x)=\int_{B_{1}(x)}|xy^{-1}|^{-\frac{pN\beta}{2(N-1)}}\tau(dy)$ is in $L^{\infty}(\H)$ as long as $\frac{Np}{N-1}<\frac{8}{\beta}$.
\hfill$\Box$

In order to get weak compactness of the measure $\mn$, we need a few Lemmata, including the uniform $L^{p}$ boundedness of the marginals, as in Proposition \ref{propb}.
\begin{lemma}\label{lem1}
Given $\beta \in (0,8)$, there exists two constants $c_{1}$ and $c_{2}$ depending only on $\beta$ such that
$$c_{1}\leq \beta \hat{\mu}_{2}^{(N)}(\ln|xy^{-1}|)\leq \beta \hat{\mu}^{(1)\otimes 2}(\ln|xy^{-1}|)\leq c_{2}.$$
\end{lemma}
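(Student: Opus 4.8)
The plan is to establish the three inequalities separately, working from the outside in. The middle inequality $\beta \hat{\mu}_{2}^{(N)}(\ln|xy^{-1}|)\leq \beta \hat{\mu}^{(1)\otimes 2}(\ln|xy^{-1}|)$ should follow from a Gibbs-variational comparison: since $\mu^{(N)}$ maximizes $\mathcal{F}_{\beta}^{(N)}$ over $X_{N}$ (the analogue of Lemma \ref{min} in the Heisenberg setting, already asserted in the text), testing with the product measure $\mu^{(1)\otimes N}$ gives $\mathcal{F}_{\beta}^{(N)}(\mu^{(N)})\geq \mathcal{F}_{\beta}^{(N)}(\mu^{(1)\otimes N})$. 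Since $\mathcal{S}^{(N)}(\mu^{(1)\otimes N})=0$ and $\mathcal{S}^{(N)}(\mu^{(N)})\leq 0$ by non-positivity of the entropy, while $\hat{\mu}^{(N)}(U^{(N)})=\tfrac{N}{2}\hat{\mu}_2^{(N)}(U(x,y))$ and $\hat{\mu}^{(1)\otimes N}(U^{(N)})=\tfrac{N}{2}\hat{\mu}^{(1)\otimes 2}(U(x,y))$ by exchangeability, rearranging yields $\tfrac{\beta}{2}\hat{\mu}_2^{(N)}(U)\geq \tfrac{\beta}{2}\hat{\mu}^{(1)\otimes 2}(U)$, i.e. $\hat{\mu}_2^{(N)}(-\ln|xy^{-1}|)\geq \hat{\mu}^{(1)\otimes 2}(-\ln|xy^{-1}|)$, which is exactly the claimed middle inequality after multiplying by $-\beta<0$.

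For the upper bound $\beta\hat{\mu}^{(1)\otimes 2}(\ln|xy^{-1}|)\leq c_{2}$: this is a statement purely about the fixed probability measure $\mu^{(1)}=\tau/\mathcal{M}^{(1)}$ with $\tau(dx)=e^{2K(x)}Q(x)\,dx$. Split $\int\int \ln|xy^{-1}|\,\mu^{(1)}(dx)\mu^{(1)}(dy)$ over the regions $|xy^{-1}|\leq 1$ and $|xy^{-1}|>1$. On the first region $\ln|xy^{-1}|\leq 0$, so that part is bounded above by $0$. On the second region, $\ln|xy^{-1}|\leq \ln(|x||y^{-1}|\cdot\text{const})\lesssim \ln|x|+\ln|y|+C$ by the triangle-type inequality for the Heisenberg gauge, and $\int \ln|x|\,\mu^{(1)}(dx)<\infty$ follows from assumption (b) (finiteness of $\int Q e^{2K}|x|^s\,dx$ with $s\geq 0$ controls a logarithmic moment since $\ln|x|\lesssim_\varepsilon |x|^\varepsilon$, and for the region $|x|\leq 1$ we just use $\ln|x|\le 0$ there together with $\mathcal{M}^{(1)}<\infty$). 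This gives the finite constant $c_{2}$.

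For the lower bound $c_{1}\leq \beta\hat{\mu}_{2}^{(N)}(\ln|xy^{-1}|)$: here assumption (a) is the workhorse. Write $\hat{\mu}_2^{(N)}(\ln|xy^{-1}|)=\hat{\mu}_2^{(N)}\big(\ln|xy^{-1}|\,\chi_{|xy^{-1}|\le 1}\big)+\hat{\mu}_2^{(N)}\big(\ln|xy^{-1}|\,\chi_{|xy^{-1}|>1}\big)$; the second term is $\geq 0$, so it suffices to bound the first (negative) term from below uniformly in $N$. Using Proposition \ref{propb} (or rather its Heisenberg analogue, whose proof the text is building toward) the two-point marginal satisfies $\mu_2^{(N)}(dx\,dy)\leq K(2,\beta\gamma)|xy^{-1}|^{-\beta/(N-1)}\tau(dx)\tau(dy)$; since $\beta/(N-1)<4$ for $N$ large, assumption (a) makes $\int_{B_1(x)}|xy^{-1}|^{-\beta/(N-1)}\tau(dy)$ bounded, and then $\int\int (-\ln|xy^{-1}|)\chi_{|xy^{-1}|\le1}|xy^{-1}|^{-\beta/(N-1)}\tau(dx)\tau(dy)$ is finite uniformly in $N$ because $(-\ln t)t^{-\beta/(N-1)}\le C_\varepsilon t^{-\beta/(N-1)-\varepsilon}$ with exponent still $<4$. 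I must be careful about the chicken-and-egg issue: Proposition \ref{propb}'s analogue needs $f(\beta)$ to exist, which in turn uses sub-additivity and the lower bound from Lemma \ref{lem34}; I expect the actual paper either proves a weaker self-contained $L^1$-type bound on $\mu_2^{(N)}$ first or orders the lemmas so this is available. The main obstacle is precisely this circularity — ensuring the uniform-in-$N$ control on the two-point marginal near the diagonal does not secretly depend on the conclusion — and handling the non-compactness, i.e. that $\mu_2^{(N)}$ assigns controlled mass far from the diagonal; both are resolved by assumptions (a) and (b) but the bookkeeping for the region $|xy^{-1}|>1$ (where $\ln$ is positive but unbounded) requires a logarithmic-moment estimate on the marginals, which should come from (b) combined with the comparison $\mu_2^{(N)}(U^{(N)})\le \mu^{(1)\otimes2}(U^{(N)})$ already established.
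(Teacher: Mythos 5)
Your handling of the outer upper bound and of the middle inequality is essentially the paper's: the bound $\beta\hat{\mu}^{(1)\otimes 2}(\ln|xy^{-1}|)\leq c_{2}$ is obtained there from $|xy^{-1}|\leq c(2+|x|)(2+|y|)$ together with assumption (b), and the comparison $\hat{\mu}_{2}^{(N)}(\ln|xy^{-1}|)\leq\hat{\mu}^{(1)\otimes 2}(\ln|xy^{-1}|)$ is exactly your Gibbs-variational/Jensen argument, which the paper packages through the quantity $f_{N}(\beta)=-\frac{2}{N}\mathcal{F}^{(N)}(\beta)$, squeezed between $\beta\hat{\mu}_{2}^{(N)}(\ln|xy^{-1}|)$ (non-positivity of the entropy) and $\beta\hat{\mu}^{(1)\otimes 2}(\ln|xy^{-1}|)$ (Jensen applied to $\hat{\mu}^{(1)\otimes N}(e^{\beta U^{(N)}})$).

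The genuine gap is in your lower bound $c_{1}\leq\beta\hat{\mu}_{2}^{(N)}(\ln|xy^{-1}|)$, and it is precisely the circularity you flagged but did not resolve. You propose to use the Heisenberg analogue of Proposition \ref{propb}, i.e. the uniform bound $\mu_{2}^{(N)}\leq K\,|xy^{-1}|^{-\beta/(N-1)}\tau\otimes\tau$, to control the negative near-diagonal contribution. In the paper, however, that uniform marginal bound is the later ``Uniform Boundedness'' proposition of Section 4, and its proof explicitly invokes Lemma \ref{lem1} and Lemma \ref{lem2}: on the non-compact group the Jensen estimate for $\mathcal{M}^{(N-n)}(k(N)\beta)/\mathcal{M}^{(N)}(\beta)$ produces the moment terms $\hat{\mu}^{(1)\otimes 2}(\ln|xy^{-1}|)$, $\hat{\mu}^{(1)}\otimes\hat{\mu}_{1}^{(N-n),k}(\ln|xy^{-1}|)$ and $\hat{\mu}_{2}^{(N-n),k}(\ln|xy^{-1}|)$, which are exactly what Lemmas \ref{lem1} and \ref{lem2} supply; so the ordering you ``expect'' is not how the paper proceeds, and your argument as written assumes a tool whose proof depends on the statement being proved. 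The paper's actual route avoids any pointwise bound on $\mu_{2}^{(N)}$: choosing $\varepsilon>0$ with $(1+\varepsilon)\beta$ still admissible, a two-temperature Jensen comparison gives $\mathcal{M}^{(N)}((1+\varepsilon)\beta)\geq\mathcal{M}^{(N)}(\beta)\exp\left(-\tfrac{1}{2}N\varepsilon\beta\hat{\mu}_{2}^{(N)}(\ln|xy^{-1}|)\right)$, hence $\beta\hat{\mu}_{2}^{(N)}(\ln|xy^{-1}|)\geq\tfrac{1}{\varepsilon}\left(f_{N}((1+\varepsilon)\beta)-f_{N}(\beta)\right)$; then $f_{N}$ is bounded below via the chaining quantity $f_{0}(\beta)=-\ln\left(\sup_{x\in\H}\int_{\H}|xy^{-1}|^{-\beta/2}\mu^{(1)}(dy)\right)$, which is finite by assumption (a), through $f_{N}(\beta)\geq f_{0}((1+\varepsilon)\beta)+f_{0}(\beta)$ for large $N$, and bounded above by the limiting free energy ($f_{N}(\beta)\leq-2A_{\beta}$), which yields an $N$-independent $c_{1}$. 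To repair your plan you would either have to prove the two-point marginal bound independently of Lemma \ref{lem1}, or replace that step by this convexity-in-$\beta$ argument.
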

{\it Proof:}
For the last inequality, we use the fact that $|xy^{-1}|\leq c(|x|+|y|)\leq c(2+|x|)(2+|y|)$. Then from assumption $(b)$, we have that $$\beta \hat{\mu}^{(1)\otimes 2}(\ln|xy^{-1}|)\leq c_{2}.$$
So we move to the second inequality. We define the function $f_{N}$ by
$$f_{N}(\beta)=-\frac{2}{N}\ln(\hat{\mu}^{(1)\otimes N}(|R^{(N)}|^{-\frac{\beta}{N}}).$$
Using Jensen's inequality, we have that
$$f_{N}(\beta)\leq \frac{2\beta}{N(N-1)}\hat{\mu}^{(1)\otimes N}(\ln(|R^{(N)}|)\leq \beta \hat{\mu}^{(1)}(\ln|xy^{-1}|).$$
On the other hand, notice that
\begin{equation}\label{fn}
-2\mathcal{F}^{(N)}_{\beta}(\mn)=Nf_{N}(\beta).
\end{equation}
Therefore
$$f_{N}(\beta)=\frac{1}{N}(-2\mathcal{S}^{(N)}(\mn)-2\beta \hat{\mu}^{(N)}(U^{(N)}),$$
and by the non-positivity of the entropy, we have
$$\beta \hat{\mu}^{(1)}(\ln|xy^{-1}|)\geq f_{N}(\beta)\geq -\frac{2}{N}\beta \hat{\mu}^{(N)}(U^{(N)})=\beta \hat{\mu}^{(N)}_{2}(\ln|xy^{-1}|).$$
It remains to show the first inequality. Since $\beta \in (0,8)$, there exists $\varepsilon>0$ such that $(1+\varepsilon)\beta \in (0,4)$. By applying Jensen's inequality twice, we have that
$$\mathcal{M}^{(N)}((1+\varepsilon)\beta)\geq \mathcal{M}^{(N)}(\beta)\exp(-\frac{1}{2}N\varepsilon \beta \hat{\mu}_{2}^{(N)}(\ln|xy^{-1}|)).$$
Hence,
$$f_{N}(\beta(1+\varepsilon))\leq f_{N}(\beta)+\varepsilon \beta \hat{\mu}_{2}^{(N)}(\ln|xy^{-1}|).$$
We now consider the function $f_{0}$ defined by 
$$f_{0}(\beta)=-\ln\Big(\sup_{x\in \H}\int_{\H}|xy^{-1}|^{-\frac{\beta}{2}}\mu^{(1)}(dy)\Big).$$
Assumption $(b)$ guaranties that $f_{0}(\beta)$ is well defined and finite and one can easily check that given $\beta\in (0,8)$, then there exists $N_{0}>0$ such that for $N\geq N_{0}$ we have
\begin{equation}\label{f0}
f_{N}(\beta)\geq f_{0}((1+\varepsilon)\beta)+f_{0}(\beta).
\end{equation}
Now from $(\ref{fn})$ and $(\ref{f0})$, we have that
$$f_{0}(\beta)+f_{0}((1+\varepsilon)\beta)\leq f_{N}(\beta)\leq -2A_{\beta}$$
Thus, with $\varepsilon$ even smaller if needed, we have
 $$\beta\hat{\mu}_{2}^{(N)}(\ln|xy^{-1}|)\geq \frac{1}{\varepsilon}(f_{N}((1+\varepsilon)\beta)-f_{N}(\beta))\geq (f_{0}((1+\varepsilon)^{2}\beta)+f_{0}((1+\varepsilon)\beta)-A_{\beta})\geq c_{1}.$$
\begin{lemma}\label{lem2}
Given $\beta \in (0,8)$, there exists $N_{1}>0$ such that for $N\geq N_{1}$, there exists a constant $c_{3}$ depending only on $\beta$ such that
$$\beta \hat{\mu}^{(1)}\otimes \hat{\mu}_{1}^{(N)}(\ln|xy^{-1}|)\leq c_{3}.$$
\end{lemma}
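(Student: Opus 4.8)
The plan is to reduce the bound to a uniform estimate on $\hat\mu_1^{(N)}\big(\ln(2+|y|)\big)$, and to obtain the latter from a uniform bound on the relative entropy of the one-particle marginal. First I would use the inequality $|xy^{-1}|\le c(|x|+|y|)\le c(2+|x|)(2+|y|)$, already exploited in Lemma~\ref{lem1}, to write
\begin{equation*}
\beta\,\hat\mu^{(1)}\otimes\hat\mu_1^{(N)}(\ln|xy^{-1}|)\ \le\ \beta\ln c+\beta\,\hat\mu^{(1)}\!\big(\ln(2+|x|)\big)+\beta\,\hat\mu_1^{(N)}\!\big(\ln(2+|y|)\big).
\end{equation*}
The first two terms form a finite constant thanks to assumption~$(b)$ (this is exactly the estimate used for the first inequality in Lemma~\ref{lem1}), so it remains to control $\hat\mu_1^{(N)}\big(\ln(2+|y|)\big)$ uniformly in $N$.

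For that I would bound the relative entropy $-\mathcal S^{(1)}(\mu_1^{(N)})$ of the first marginal with respect to $\mu^{(1)}$. Iterating the inequality $\mathcal S^{(n)}(\mu_n^{(N)})\le\mathcal S^{(n-1)}(\mu_{n-1}^{(N)})+\mathcal S^{(1)}(\mu_1^{(N)})$ (a special case of strong sub-additivity, using exchangeability of $\mu^{(N)}$) gives $\mathcal S^{(N)}(\mu^{(N)})\le N\,\mathcal S^{(1)}(\mu_1^{(N)})$, hence $\mathcal S^{(1)}(\mu_1^{(N)})\ge\frac1N\mathcal S^{(N)}(\mu^{(N)})$. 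On the other hand, since $\mathcal F^{(N)}_\beta(\mu^{(N)})=\mathcal S^{(N)}(\mu^{(N)})+\beta\hat\mu^{(N)}(U^{(N)})$ and $\hat\mu^{(N)}(U^{(N)})=\frac N2\hat\mu^{(N)}_2(U(x,y))=-\frac N2\hat\mu^{(N)}_2(\ln|xy^{-1}|)$, we get
\begin{equation*}
\mathcal S^{(N)}(\mu^{(N)})=\mathcal F^{(N)}_\beta(\mu^{(N)})+\frac N2\,\beta\,\hat\mu^{(N)}_2(\ln|xy^{-1}|).
\end{equation*}
By Lemma~\ref{lem1}, $\beta\,\hat\mu^{(N)}_2(\ln|xy^{-1}|)\ge c_1$, and by $(\ref{fn})$ together with the $N$-independent two-sided bound on $f_N(\beta)$ established inside the proof of Lemma~\ref{lem1} one has $\mathcal F^{(N)}_\beta(\mu^{(N)})=-\frac N2 f_N(\beta)\ge -CN$. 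Hence $\mathcal S^{(N)}(\mu^{(N)})\ge -C'N$, and therefore $-\mathcal S^{(1)}(\mu_1^{(N)})\le C'$ uniformly in $N$ for $N$ large.

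Finally I would apply the classical entropy (Gibbs variational) inequality: for every $h$ bounded below with $\int e^{h}\,d\mu^{(1)}<\infty$,
\begin{equation*}
\int h\,d\mu_1^{(N)}\ \le\ \big(-\mathcal S^{(1)}(\mu_1^{(N)})\big)+\ln\!\int e^{h}\,d\mu^{(1)}.
\end{equation*}
Choosing $h(y)=\lambda\ln(2+|y|)$ with $\lambda>0$ small enough that $\int(2+|y|)^\lambda\,d\mu^{(1)}<\infty$ — which is guaranteed by assumption~$(b)$ — yields $\lambda\,\hat\mu_1^{(N)}(\ln(2+|y|))\le C'+\ln\int(2+|y|)^\lambda d\mu^{(1)}$, so $\hat\mu_1^{(N)}(\ln(2+|y|))$ is bounded by a constant $C_4(\beta)$ uniformly in $N$. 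Plugging this into the first display gives the desired $c_3=c_3(\beta)$.

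The step I expect to be the main obstacle is the uniform control of the relative entropy of the one-particle marginal: the naive monotonicity $\mathcal S^{(1)}(\mu_1^{(N)})\ge\mathcal S^{(N)}(\mu^{(N)})$ only gives the useless $O(N)$ bound, and one really needs both the lower bound $\beta\hat\mu^{(N)}_2(\ln|xy^{-1}|)\ge c_1$ from Lemma~\ref{lem1} (keeping the energy term from dragging the entropy down linearly) and the linear-in-$N$ control of $|\mathcal F^{(N)}_\beta(\mu^{(N)})|$. The only other point that needs care is the integrability $\int(2+|y|)^\lambda d\mu^{(1)}<\infty$ required to run the entropy inequality with a genuine power of $|y|$; this is precisely what assumption~$(b)$ supplies, used here in the same spirit as in Lemma~\ref{lem1}.
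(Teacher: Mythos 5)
Your argument is correct, and after the common first step (splitting $\ln|xy^{-1}|\le \ln c+\ln(2+|x|)+\ln(2+|y|)$ and disposing of the $\hat\mu^{(1)}(\ln(2+|x|))$ term by the integrability assumption) it takes a genuinely different route to the only hard point, the $N$-uniform bound on $\hat\mu_1^{(N)}(\ln(2+|y|))$. The paper writes out the density of the first marginal explicitly, applies the pointwise convex-duality inequality $e^{X}+Y\ln Y-Y\ge XY$, and is then forced to bound the partition-function ratio $\mathcal{M}^{(N-1)}(\beta')/\mathcal{M}^{(N)}(\beta)$ from below; that is the heavy part of the paper's proof, requiring the regularized potential $V_{\varepsilon}$, Minlos' theorem/Gaussian functional integration to get $\mathcal{M}^{(N)}_{\varepsilon}(\beta)\ge\bigl(\mathcal{M}^{(N-1)}_{\varepsilon}(\beta')\bigr)^{N/(N-1)}$, and the bound $\liminf\frac1N\mathcal{F}^{(N)}(\beta)\ge A_{\beta}$. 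You bypass all of this: iterated sub-additivity of the entropy applied to the marginals of the finite exchangeable measure $\mu^{(N)}$ (exactly the way the paper itself uses it in Lemma \ref{lem35} and Proposition \ref{pconv}, here with blocks of size one) gives $\mathcal{S}^{(N)}(\mu^{(N)})\le N\mathcal{S}^{(1)}(\mu_1^{(N)})$; the identity $\mathcal{S}^{(N)}(\mu^{(N)})=\mathcal{F}^{(N)}_{\beta}(\mu^{(N)})+\frac N2\beta\hat\mu_2^{(N)}(\ln|xy^{-1}|)$ combined with $(\ref{fn})$, the upper bound $f_N(\beta)\le c_2$ from inside Lemma \ref{lem1}, and the lower bound $c_1$ of Lemma \ref{lem1} gives $\mathcal{S}^{(N)}(\mu^{(N)})\ge -C'N$, hence a uniform bound on the relative entropy $-\mathcal{S}^{(1)}(\mu_1^{(N)})$; and the Gibbs/Donsker--Varadhan inequality with $h=\lambda\ln(2+|y|)$, $0<\lambda\le s$, converts this into the desired moment bound. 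In effect you apply the same convex-duality inequality as the paper, but at the level of the one-particle marginal measure rather than pointwise, which is why the ratio $\mathcal{M}^{(N-1)}(\beta')/\mathcal{M}^{(N)}(\beta)$ never appears; since that ratio bound is used only internally to this proof (the later uniform-boundedness proposition quotes only the statement of the present lemma), nothing is lost downstream. Two caveats, both shared with the paper's own proof: you need assumption $(b)$ to hold with some strictly positive $s$ so that $\int(2+|y|)^{\lambda}d\mu^{(1)}<\infty$ (the paper likewise takes $s\in(0,s^{*})$), and the sub-additivity must be invoked for marginals of $\mu^{(N)}\in P^{s}(M^{N})$ rather than of an element of $P^{s}(\Omega)$, which is legitimate and is how the paper uses it elsewhere.
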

{\it Proof:}
First, we use the inequality $|xy^{-1}|\leq c(|x|+2)(|y|+2)$  to have
$$\hat{\mu}^{(1)}\otimes \hat{\mu}_{1}^{(N)}(\ln|xy^{-1}|)\leq \tilde{c}+\hat{\mu}^{(1)}(\ln(2+|x|))+\hat{\mu}_{1}^{(N)}(\ln(2+|y|)).$$
Assumption $(a)$ yields
$$\hat{\mu}^{(1)}(\ln(2+|x|))\leq C_{1}.$$
Therefore, it remains to bound the second term. First, we have for $\beta'=(1-\frac{1}{N-1})\beta$,
\begin{align}
\hat{\mu}_{1}^{(N)}(\ln(2+|y|))=&\frac{\mathcal{M}^{(N-1)}(\beta')}{\mathcal{M}^{(N)}(\beta)}\int_{H^{N-1}}\frac{|R^{(N-1)}|^{-\frac{\beta'}{N-2}}}{\mathcal{M}^{(N-1)}(\beta')}\times \notag\\
&\Big(\int_{\H}\prod_{i=1}^{N-1}|x_{i}y^{-1}|^{-\frac{\beta}{N-1}}\ln(2+|y|)\tau(dy)\Big)\prod_{i=1}^{N-1}\tau(dx_{i}).\notag
\end{align}
 We fix $s\in (0,s^{*})$, where $s^{*}$ is the sup of all $s>0$ for which $(b)$ holds. Using the inequality $e^{X}+Y\ln(Y)-Y\geq XY$, for 
$$X=e^{s\ln(2+|y|)}$$
and 
$$Y=\frac{1}{s}\int_{\H^{N-1}}\frac{|R^{(N-1)}|^{-\frac{\beta'}{N-2}}}{\mathcal{M}^{(N-1)}(\beta')}\prod_{i=1}^{N-1}|x_{i}y^{-1}|^{-\frac{\beta}{N-1}}\prod_{i=1}^{N-1}\tau(dx_{i}),$$
yields
\begin{align}
C_{N}(\beta)&:=\hat{\mu}_{1}^{(N)}(\ln(2+|y|))-\frac{\mathcal{M}^{(N-1)}(\beta')}{\mathcal{M}^{(N)}(\beta)}\int_{\H}\exp(s(2+|y|))\tau(dy)\notag\\
&\leq -\frac{1}{s}(1+\ln(s)+\beta'\hat{\mu}_{2}^{(N)}(\ln|x-y|))\notag\\
&\leq \tilde{c}_{2}(\beta)\notag
\end{align}
where the last inequality follows from Lemma \ref{lem1}. Clearly, from assumption $(b)$, we have the finiteness of the integral $\int_{\H}\exp(s\ln(2+|y|))\tau(dy)$. Therefore, in order to finish the proof, it is enough to show the $N$-independent bound of the quotient $\frac{\mathcal{M}^{(N-1)}(\beta')}{\mathcal{M}^{(N)}(\beta)}$. This last bound will be more involved and needs a different approach from the previous estimates. It follows the same idea as in \cite{CK} and \cite{Ma} but we will add it here for the sake of completion.
We start by regularizing the potential $(x,y)\mapsto \ln|xy^{-1}|$ by defining the function 
$$V_{\varepsilon}(x,y)=\frac{1}{|B_{\varepsilon}(0)|^{2}}\int_{B_{\varepsilon}(x)}\int_{B_{\varepsilon}(y)}\ln|ab^{-1}| da db.$$
By the Lebesgue differentiation theorem (which holds in the Heisenberg group $\H$), we have that $V_{\varepsilon}(x,y)\to \ln|xy^{-1}|$, for almost every $x,y \in \H$. Next, we define the quantity $\mathcal{M}^{(N)}_{\varepsilon}(\beta)$, by substituting $\ln$ by $V_{\varepsilon}$ in the definition of $\mathcal{M}^{(N)}(\beta)$. We consider the Hilbert space $\mathcal{H}_{\varepsilon}$ obtained by the completion of the set of $C^{\infty}_{0}(\H)$ functions with mean zero, under the dot product $\langle \cdot,\cdot \rangle_{\varepsilon}$ defined by
$$\langle f,g \rangle_{\varepsilon}=-\frac{\beta}{N-1}\int_{\H}\int_{\H}f(x)V_{\varepsilon}(x,y)f(y)dxdy.$$
We also consider the measures $\delta_{y}^{\sharp}\in \mathcal{H}_{\varepsilon}$ defined by
$$\delta_{y}^{\sharp}=\delta_{y}-\chi_{B_{r_{0}}},$$
where $r_{0}$ is picked so that $|B_{r_{0}}|=1$. We introduce the function $W_{\varepsilon}$ and the measure $\tilde{\tau}$ defined by
$$W_{\varepsilon}(x)=\int_{B_{r_{0}}}V_{\varepsilon}(x,y)dy-\frac{1}{2}\int_{B_{r_{0}}}V_{\varepsilon}(x,y)dy,$$
and
$$\tau=e^{\beta W_{\varepsilon}}\tilde{\tau}.$$
With these notations, an easy computation shows that
\begin{align}
\mathcal{M}^{(N)}_{\varepsilon}(\beta)&=\int_{\H^{N}}\exp(-\frac{\beta}{N-1}\sum_{1\leq i<j\leq N}V_{\varepsilon}(x_{i},x_{j}))\prod_{\ell=1}^{N}e^{\beta W_{\varepsilon}(x_{\ell})}\tilde{\tau}(dx_{\ell})\notag\\
&=e^{-\frac{N\beta}{2(N-1)}V_{\varepsilon}(0,0)}\int_{\H^{N}}\exp(\frac{1}{2}\langle \delta^{\sharp}_{(N)},\delta^{\sharp}_{(N)}\rangle_{\varepsilon})\prod_{\ell=1}^{N} \tilde{\tau}(dx_{\ell})\notag
\end{align}
where $\delta_{(N)}^{\sharp}=\sum_{i=1}^{N}\delta_{x_{i}}^{\sharp}$ and where we used the translation invariance of the measure in the Heisenberg group to write $V_{\varepsilon}(x_{i},x_{i})=V_{\varepsilon}(0,0)$. Now using Minlo's theorem for Gaussian functional integration (see \cite{GJ}), we have the existence of a Gaussian average $Ave(\cdot)$ on the space of linear forms $\varphi$, on $\mathcal{H}_{\varepsilon}$, with $Ave(\varphi(\delta_{x}^{\sharp})=0$ and
$$Ave(\varphi(\delta_{x}^{\sharp})\varphi(\delta_{x}^{\sharp}))=\frac{\beta}{N-1}V_{\varepsilon}(x,y).$$
Therefore,
$$Ave(\exp(\varphi(\delta_{(N)}^{\sharp})))=\exp(\frac{1}{2}\langle \delta_{(N)}^{\sharp},\delta_{(N)}^{\sharp}\rangle_{\varepsilon}).$$
Hence,
$$\mathcal{M}_{\varepsilon}^{(N)}(\beta)=e^{-\frac{N\beta}{2(N-1)}V_{\varepsilon}(0,0)}Ave\Big(\big(\int_{\H}\exp(\varphi(\delta_{x}^{\sharp}))\tilde{\tau}(dx)\big)^{N}\Big).$$
Using Jensen's inequality, we have that
$$\mathcal{M}_{\varepsilon}^{(N)}(\beta)\geq \Big(\mathcal{M}_{\varepsilon}^{(N-1)}(\beta')\Big)^{\frac{N}{N-1}}.$$
Thus, after letting $\varepsilon \to 0$, one has
$$\frac{\mathcal{M}^{(N)}(\beta)}{\mathcal{M}^{(N-1)}(\beta')}\geq \Big(\mathcal{M}^{(N-1)}(\beta')\Big)^{\frac{1}{N-1}}.$$
But recall that $\liminf_{N\to \infty}\frac{1}{N}\mathcal{F}^{(N)}(\beta)\geq A_{\beta}$, therefore
$$\liminf_{N\to \infty}\frac{\mathcal{M}^{(N)}(\beta)}{\mathcal{M}^{(N-1)}(\beta')}\geq \mathcal{M}^{(1)} e^{-A_{\beta}},$$
which finishes the proof.
\hfill$\Box$
\begin{proposition}[Uniform Boundedness]
Given $n\geq 1$ and $\beta \in (0,8)$, there exists $N(n,\beta)\in \N$ and a constant $C(n,\beta)$ such that, for $N\geq N(n,\beta)$,
$$\frac{d\mu^{(N)}_{n}}{d\tau^{\otimes n}}\leq C(n,\beta)|R^{(n)}|^{-\frac{\beta}{N-1}}.$$
\end{proposition}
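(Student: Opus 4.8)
The plan is to prove the uniform pointwise bound on the $n$-th marginal density by mimicking the compact argument in Proposition \ref{propb}, but now keeping careful track of the measure $\tau$ being non-finite-but-integrable and using the estimates of Lemma \ref{lem1} and Lemma \ref{lem2} to control the mass that could escape to infinity. First I would write $(N-1)U^{(N)} = -\ln|R^{(N)}| = W^{(n)} + W^{(n,N-n)} + W^{(N-n)}$, where $W^{(n)}$ involves only $x_1,\dots,x_n$, $W^{(N-n)}$ involves only $x_{n+1},\dots,x_N$, and $W^{(n,N-n)}$ is the mixed part. Integrating $\mu^{(N)}$ over the last $N-n$ variables gives
$$\frac{d\mu^{(N)}_n}{d\tau^{\otimes n}}(x_1,\dots,x_n) = \frac{1}{\mathcal{M}^{(N)}(\beta)} \exp\!\left(-\frac{\beta}{N-1}W^{(n)}\right) \int_{\H^{N-n}} \exp\!\left(-\frac{\beta}{N-1}\big[W^{(n,N-n)} + W^{(N-n)}\big]\right)\prod_{\ell=n+1}^N \tau(dx_\ell).$$
The factor $\exp(-\frac{\beta}{N-1}W^{(n)}) = |R^{(n)}|^{-\beta/(N-1)}$ is exactly the claimed prefactor, so everything reduces to showing the remaining integral is bounded by $C(n,\beta)\,\mathcal{M}^{(N)}(\beta)$.

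Next I would estimate that integral by Hölder with exponents $q = \frac{N-1}{2n}$ and $q' = \frac{N-1}{N-2n-1}$, splitting off $W^{(n,N-n)}$ from $W^{(N-n)}$. The $W^{(n,N-n)}$ factor is handled as in the proof of the preceding Lemma (the absolute-continuity lemma): since $W^{(n,N-n)} = -\sum_{k=1}^n \sum_{j=n+1}^N \ln|x_k x_j^{-1}|$, each inner integral $\int_\H |x_k y^{-1}|^{-qn\beta/(N-1)}\tau(dy)$ is uniformly bounded once $\frac{qn\beta}{N-1} = \frac{\beta}{2} < 4$ — which holds precisely because $\beta < 8$ — using assumption $a)$ near the diagonal and finiteness of $\mathcal{M}^{(1)}$ away from it; this gives a bound $C(n,\beta)$. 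The $W^{(N-n)}$ factor is, up to the power $1 - \frac{2n}{N-1}$, exactly $\mathcal{M}^{(N-n)}\!\left(\beta\frac{N-n-1}{N-2n-1}\right)$. So the problem collapses to the ratio
$$\frac{\mathcal{M}^{(N-n)}\!\left(\beta\frac{N-n-1}{N-2n-1}\right)^{1-\frac{2n}{N-1}}}{\mathcal{M}^{(N)}(\beta)},$$
and I would bound this by iterating the one-step ratio estimate established at the end of Lemma \ref{lem2}, namely $\liminf_N \mathcal{M}^{(N)}(\beta)/\mathcal{M}^{(N-1)}(\beta(1-\frac{1}{N-1})) \geq \mathcal{M}^{(1)}e^{-A_\beta} > 0$, together with the convexity and finiteness of $f(\beta) = \lim \frac1N \mathcal{F}^{(N)}(\beta)$ (here the argument of $\mathcal{M}$ is slightly larger than $\beta$, so one picks $\beta_0 \in (\beta,8)$ and uses monotonicity of the right derivative $\partial_\beta^+ f$ exactly as in Proposition \ref{propb}). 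Combining all three pieces yields $\frac{d\mu^{(N)}_n}{d\tau^{\otimes n}} \leq C(n,\beta)|R^{(n)}|^{-\beta/(N-1)}$ for $N \geq N(n,\beta)$.

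The main obstacle, as in the compact case, is controlling the ratio of partition functions $\mathcal{M}^{(N-n)}(\cdot)/\mathcal{M}^{(N)}(\beta)$ with an argument that has drifted away from $\beta$; but here there is the extra subtlety that $\tau$ is not a probability measure and the domain is non-compact, so one cannot naively insert and remove particles via Jensen against $\mu^{(1)\otimes k}$ without invoking assumption $b)$ to keep $\mathcal{M}^{(1)} < \infty$ and assumption $a)$ to keep the diagonal singularities integrable. This is precisely why the preceding lemmata were set up: Lemma \ref{lem1} supplies the two-sided control $c_1 \leq \beta\hat\mu^{(N)}_2(\ln|xy^{-1}|) \leq c_2$, which bounds $\partial_\beta$ of $\frac1N\mathcal{F}^{(N)}$ uniformly and thus makes the convexity-plus-iteration scheme for the partition-function ratio go through. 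Once that ratio is under control, the rest is the same Hölder-and-assumption-$a)$ bookkeeping already used in the absolute-continuity lemma, and I would not belabor it.
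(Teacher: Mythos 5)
Your proposal is correct in substance and follows the paper's skeleton for most of the way: the same splitting of $(N-1)U^{(N)}$ into $W^{(n)}+W^{(n,N-n)}+W^{(N-n)}$, the same H\"older exponents $q=\frac{N-1}{2n}$ and $q'=\frac{N-1}{N-2n-1}$, and the same use of assumption $a)$ with exponent $\frac{\beta}{2}<4$ to dispose of the mixed block. Where you genuinely diverge is the decisive partition-function ratio. The paper bounds $\mathcal{M}^{(N-n)}(k(N)\beta)/\mathcal{M}^{(N)}(\beta)$, with $k(N)=\frac{N-n-1}{N-2n-1}$, in a single application of Jensen's inequality, which produces three correlation factors that are then controlled by the two-sided bound of Lemma \ref{lem1} and by the mixed one-point bound $\hat{\mu}^{(1)}\otimes\hat{\mu}_{1}^{(N-n),k}(\ln|xy^{-1}|)\leq c_{3}$ of Lemma \ref{lem2}; that is the only place the statement of Lemma \ref{lem2} enters. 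You instead iterate the one-step estimate from the end of the proof of Lemma \ref{lem2} (the Minlos/Gaussian-integration bound), which telescopes to $\mathcal{M}^{(N)}(\beta)\geq c^{n}\,\mathcal{M}^{(N-n)}\bigl(\beta\tfrac{N-n-1}{N-1}\bigr)$, and then repair the $O(1/N)$ upward temperature shift to $k(N)\beta$ by convexity of $\tilde\beta\mapsto\ln\mathcal{M}^{(N-n)}(\tilde\beta)$. This buys independence from the statement of Lemma \ref{lem2} (you use only a byproduct of its proof), but to close it you should make three things explicit. First, the shift correction must be quantified through Lemma \ref{lem1}: the derivative of $\ln\mathcal{M}^{(N-n)}$ in the temperature is $-\frac{N-n}{2}\hat{\mu}_{2}^{(N-n),\cdot}(\ln|xy^{-1}|)$, of size $O(N)$ against a shift of size $O(n/N)$; phrasing it via ``convexity and finiteness of $f(\beta)=\lim\frac{1}{N}\mathcal{F}^{(N)}(\beta)$'' and $\partial_{\beta}^{+}f$ as in Proposition \ref{propb} is not available verbatim, since Section 4 never constructs this limit in the non-compact setting (only the one-sided bounds through $A_{\beta}$ and $f_{0}$) --- your closing paragraph shows you know this, so state the estimate that way. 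Second, both the one-step ratio bound and Lemma \ref{lem1} are applied at temperatures varying with $N$ (namely $\beta\frac{N-j}{N-1}$ and $k(N)\beta$), so you need their constants uniform for temperatures in a compact neighbourhood of $\beta$ inside $(0,8)$; this follows from their proofs but should be said. Third, the leftover factor $\mathcal{M}^{(N-n)}(k(N)\beta)^{-\frac{2n}{N-1}}$ (the paper's $\mathcal{A}_{N}$) still needs the elementary Jensen lower bound $\mathcal{M}^{(N-n)}(k(N)\beta)\geq\bigl(\mathcal{M}^{(1)}\bigr)^{N-n}e^{-C(N-n)}$, which your write-up passes over. With those points filled in, your route gives the stated bound for $N\geq N(n,\beta)$.
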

{\it Proof:}
First, we write
$$\frac{d\mu^{(N)}_{n}}{d\tau^{\otimes n}}=K(x_{1},\cdots,x_{n})\frac{|R^{(n)}|^{-\frac{\beta}{N-1}}}{\mathcal{M}^{(N)}(\beta)},$$
where 
$$K(x_{1},\cdots,x_{n})=\int_{\H^{(N-n)}}\prod_{1\leq i\leq n<j\leq N}|x_{i}x_{j}^{-1}|^{-\frac{\beta}{N-1}}\prod_{n\leq k<\ell \leq N}|x_{i}x_{j}^{-1}|^{-\frac{\beta}{N-1}}\tau(dx_{j}).$$
Using H\"{o}lder's inequality, there exists $N(n,\beta)$ such that for $N>N(n,\beta)$ we have
\begin{align}
K(x_{1},\cdots, x_{n})\leq &\Big (\int_{\H^{N-n}}\prod_{1\leq i\leq n<j\leq N}|x_{i}x_{j}^{-1}|^{-\frac{\beta}{2n}}\tau(dx_{j})\Big)^{-\frac{2n}{N-1}}\times \notag\\
&\Big(\int_{\H^{N-n}}\prod_{n\leq i<j \leq N}|x_{i}x_{j}^{-1}|^{-\frac{\beta}{N-1-2n}}\tau(dx_{j})\Big)^{1-\frac{2n}{N-1}}.\notag
\end{align}
For the first term of the right hand side, we have
\begin{align}
\int_{\H^{N-n}}\prod_{1\leq i\leq n<j\leq N}|x_{i}x_{j}^{-1}|^{-\frac{\beta}{2n}}\tau(dx_{j})&=\Big(\int_{\H}\prod_{i=1}^{n}|x_{i}x^{-1}|^{-\frac{\beta}{2n}}\tau(dx)\Big)^{N-n}\notag\\
&\leq \Big(\frac{1}{n}\int_{\H}\sum_{i=1}^{n}|x_{i}x^{-1}|^{-\frac{\beta}{2}}\tau(dx)\Big)^{N-n}\notag\\
&\leq \Big(\sup_{y\in \H}\int_{\H}|yx^{-1}|^{-\frac{\beta}{2}}\tau(dx)\Big)^{N-n}\notag
\end{align}
Hence, the first term is uniformly bounded. For the second term, we first consider 
$$\mathcal{A}_{N}=\Big(\int_{\H^{N-n}}\prod_{n\leq i<j \leq N}|x_{i}x_{j}^{-1}|^{-\frac{\beta}{N-1-2n}}\tau(dx_{j})\Big)^{-\frac{2n}{N-1}}=\Big(\mathcal{M}^{(N-n)}(k(N)\beta)\Big)^{-\frac{2n}{N-1}},$$
where $k(N)=\frac{N-n-1}{N-2n-1}.$ Then clearly $$\limsup_{N\to \infty}\mathcal{A}_{N} \leq \Big(\frac{e^{-\mathcal{A_{\alpha}}}}{\mathcal{M}^{(1)}}\Big).$$
Therefore, in order to finish the proof, one needs to bound $\frac{\mathcal{M}^{(N-n)}(k(N)\beta)}{\mathcal{M}^{(N)}(\beta)}$. Indeed, using Jensen's inequality 
\begin{align}
\frac{\mathcal{M}^{(N-n)}(k(N)\beta)}{\mathcal{M}^{(N)}(\beta)}\leq&\frac{1}{\Big(\mathcal{M}^{(1)}\Big)^{n}}\exp\Big(\frac{n(n-1)}{2(N-1)}\beta \hat{\mu}^{(1)\otimes 2}(\ln|xy^{-1}|)\Big)\times\notag\\
& \exp\Big(n(1-\frac{n}{N-1})\beta \hat{\mu}^{(1)}\otimes \hat{\mu}_{1}^{(N-n),k}(\ln|xy^{-1}|)\Big)\times\notag\\
&\exp\Big(-n(\frac{N-n-1}{N-1})k(N)\beta \hat{\mu}_{2}^{(N-n),k}(\ln|xy^{-1}|)\Big),\notag
\end{align}
where $\mu^{(N-n,k)}$ is defined the same way as $\mn$ with $\beta$ switched with $K(N)\beta$. By Lemma \ref{lem1}, The first exponential term is then bounded uniformly with respect to $N$ and since $k(N)\to 1$ as $N\to \infty$, using the upper bound in  Lemma \ref{lem1} and the upper bound in Lemma \ref{lem2}, we get the uniform boundedness of the the desired quantities.
\hfill$\Box$

The last ingredient for the weak-compactness of the sequence $(\mu_{n}^{(N)})_{n\leq N}$ is its tightness, since we are working in a non-compact domain. So we show the following
\begin{lemma}
The sequence $(\mu_{n}^{(N)})_{n\leq N}$ is tight.
\end{lemma}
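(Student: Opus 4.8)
The plan is to deduce the tightness of every marginal from the tightness of the one‑particle marginal $\mu_1^{(N)}$, and to control the latter by combining the uniform density bound of the previous proposition with the integrability hypothesis $(b)$.

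First I would note that since $\H$ is a locally compact Polish space, each individual probability measure $\mu_n^{(N)}$ is tight, so only uniformity in $N$ is at stake; and since the Uniform Boundedness proposition applies only for $N\ge N(n,\beta)$, the finitely many smaller values of $N$ can be absorbed by enlarging the compact set. Now fix $n$ and use the exchangeability of $\mu^{(N)}$ (hence of $\mu_n^{(N)}$): for $R>0$, setting $C_R=\overline{B_R(0)}^{\,n}\subset \H^n$, one has
$$\mu_n^{(N)}\bigl(\H^n\setminus C_R\bigr)\;\le\;\sum_{i=1}^{n}\mu_n^{(N)}\bigl(\{|x_i|>R\}\bigr)\;=\;n\,\mu_1^{(N)}\bigl(\{|x|>R\}\bigr),$$
so everything reduces to a uniform‑in‑$N$ tail estimate for $\mu_1^{(N)}$.

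For that I would invoke the Uniform Boundedness proposition with $n=1$: since $R^{(1)}$ is an empty product, equal to $1$, it gives $\frac{d\mu_1^{(N)}}{d\tau}\le C(1,\beta)$ for all $N\ge N(1,\beta)$, where $\tau(dx)=e^{2K(x)}Q(x)\,dx$. Consequently, with $s\ge 0$ as in assumption $(b)$,
$$\hat\mu_1^{(N)}\bigl(|x|^{s}\bigr)\;\le\;C(1,\beta)\int_{\H}e^{2K(x)}Q(x)|x|^{s}\,dx\;<\;\infty$$
uniformly in $N$. If $s>0$, Markov's inequality bounds the tails, $\mu_1^{(N)}(\{|x|>R\})\le R^{-s}\hat\mu_1^{(N)}(|x|^{s})\to 0$ as $R\to\infty$; if $s=0$, then $\tau$ is a finite measure and $\mu_1^{(N)}(\{|x|>R\})\le C(1,\beta)\int_{\{|x|>R\}}e^{2K}Q\,dx\to 0$ by dominated convergence. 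In either case, feeding this back into the previous display gives $\mu_n^{(N)}(\H^n\setminus C_R)<\varepsilon$ for $R$ large, uniformly in $N$, which is tightness.

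There is no genuine obstacle here once the Uniform Boundedness proposition is in hand; the only things to keep track of are that hypothesis $(a)$ is precisely what prevents the one‑particle density from concentrating — so that $\frac{d\mu_1^{(N)}}{d\tau}\le C(1,\beta)$ holds uniformly in $N$ — and that hypothesis $(b)$ is what makes the resulting moment/tail integral converge. In particular the near‑diagonal singularity $|R^{(n)}|^{-\beta/(N-1)}$ appearing in the marginals for $n\ge 2$ is irrelevant, since the whole argument only ever uses the first marginal, where this factor is absent.
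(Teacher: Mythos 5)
Your argument is correct, but it reaches the tail estimate by a different route than the paper. Both proofs make the same first move: by exchangeability it suffices to control the first marginal $\mu_1^{(N)}$ uniformly in $N$ (your union bound over coordinates makes this explicit). For the tail bound, however, you invoke the Uniform Boundedness proposition with $n=1$, where the singular factor $|R^{(1)}|^{-\beta/(N-1)}$ is trivial, to get $\frac{d\mu_1^{(N)}}{d\tau}\le C(1,\beta)$ for $N\ge N(1,\beta)$, and then conclude via assumption $(b)$ (Markov with the moment $|x|^{s}$ when $s>0$, or simply the finiteness of $\tau$ when $s=0$); this is legitimate and non-circular, since that proposition is established before the tightness lemma and its proof for $n=1$ only needs $\beta/2<4$ together with assumption $(a)$. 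The paper instead works at the level of moments: it uses only Lemma \ref{lem2}, i.e.\ the uniform bound on the mixed logarithmic moment $\hat{\mu}^{(1)}\otimes\hat{\mu}_1^{(N)}(\ln|xy^{-1}|)$, introduces the potential $h(y)=\int_{\H}\ln|yx^{-1}|\,\mu^{(1)}(dx)+C$, which is continuous and tends to $+\infty$, and applies a Chebyshev-type inequality to $h$. Your version is shorter once the uniform density bound is available and gives quantitative (polynomial, when $s>0$) tail decay; the paper's version is weaker in what it imports — it never needs the pointwise density bound, only the logarithmic moment control — which is why it is stated as a separate step rather than folded into the Uniform Boundedness proposition. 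Your remark about absorbing the finitely many $N<N(1,\beta)$ by Ulam's theorem is a small but correct point of care.
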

{\it Proof:}
Using the symmetry of the measure $\mu^{(N)}_{n}$, it is enough to show tightness for the case $n=1$.  Namely, we need to show that given $\varepsilon>0$, there exists $R(\varepsilon)$ such that
$$\mu_{1}^{(N)}(B_{R(\varepsilon)})\geq 1-\varepsilon.$$
We consider then the map $h:\H\to \R$ defined by
$$h(y)=\int_{\H}\ln|yx^{-1}|\mu^{(1)}(dx)+C,$$
where $C$ is a constant chosen so that $h$ is positive. It is possible to choose such a constant since by construction of $\mu^{(1)}$, $h$ is continuous and $\lim_{y\to \infty} h(y) =+\infty$, uniformly in $y$. Therefore, from Lemma \ref{lem2}, given $\varepsilon>0$, there exists $R(\varepsilon)>0$, such that
$$\hat{\mu}_{1}^{(N)}(h(x))\frac{1}{\varepsilon}\leq \frac{C(\beta)}{\varepsilon}\leq \inf_{x\not \in B_{R(\varepsilon)}}h(x).$$ 
Thus,
\begin{align}
\hat{\mu}^{(N)}_{1}(h(x))&\geq \hat{\mu}^{(N)}_{1}(h(x)\chi_{\H\setminus B_{R(\varepsilon)}})\notag\\
&\geq \frac{1}{\varepsilon} \hat{\mu}_{1}^{(N)}(h(x)) \hat{\mu}^{(N)}_{1}(\chi_{\H\setminus B_{R(\varepsilon)}})\notag \\
&\geq \frac{1}{\varepsilon} \hat{\mu}_{1}^{(N)}(h(x)) (1-\mu^{(N)}_{1}(B_{R(\varepsilon)})).\notag
\end{align}
The result then follows after dividing by $\frac{1}{\varepsilon} \hat{\mu}_{1}^{(N)}(h(x))$.
\hfill$\Box$

Now given the weak compactness, the rest of the procedure of Section 3 can be carried out to prove the following
\begin{theorem}
Given a function $Q$ satisfying $(a)$ and $(b)$. Then, for any $\beta \in (0,8)$, there exists $\rho_{\beta}\in L^{p}(\H)$ for all $p\geq 1$, such that
$$\rho_{\beta}(x)=\frac{Q(x)e^{K(x)-\beta \int_{\H}\ln|xy^{-1}|\rho_{\beta}(y)dy}}{\int_{\H}Q(x)e^{K(x)-\beta \int_{\H}\ln|xy^{-1}|\rho_{\beta}(y)dy}dx}.$$
\end{theorem}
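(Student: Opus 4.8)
The plan is to follow verbatim the scheme that was already carried out in Section 3 for the compact case, replacing the manifold $M$ by $\H$, the potential $U(x,y)$ by the explicit logarithmic kernel $-\ln|xy^{-1}|$, and the reference measure by $\tau(dx)=e^{2K(x)}Q(x)\,dx$, which by assumption $(b)$ has finite total mass $\mathcal{M}^{(1)}$, so that $\mu^{(1)}=\frac{1}{\mathcal{M}^{(1)}}\tau$ is a well-defined probability measure. All the structural ingredients are already in place: the first lemma of this section gives $d\mu^{(N)}\ll d\tau^{\otimes N}$ with $L^{p}$ density for $p\in[1,8/\beta)$ and $N$ large; Lemma \ref{min} (reproved in this setting by the same $x\ln x\geq x-1$ argument) gives that $\mu^{(N)}$ is the unique maximizer of $\mathcal{F}^{(N)}_{\beta}$ and identifies $\mathcal{F}^{(N)}(\beta)$ with $\ln(\mathcal{M}^{(N)}(\beta)/(\mathcal{M}^{(1)})^{N})$; Lemmata \ref{lem1}, \ref{lem2} and the Uniform Boundedness proposition give the key $N$-uniform $L^{p}$ bound $\frac{d\mu^{(N)}_{n}}{d\tau^{\otimes n}}\leq C(n,\beta)|R^{(n)}|^{-\beta/(N-1)}$; and the final Lemma gives tightness of $(\mu^{(N)}_{n})_{n\leq N}$.

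From here I would argue exactly as in the proof of Theorem \ref{pthm}. First I note that $\frac{1}{N}\mathcal{F}^{(N)}(\beta)$ is bounded (Lemmata \ref{lem34}, \ref{lem1}) and subadditive (Lemma \ref{lem35}, whose proof only used symmetry and subadditivity of the entropy), so the limit $f(\beta)=\lim_N\frac{1}{N}\mathcal{F}^{(N)}(\beta)$ exists and is finite. Combining the uniform $L^{p}$-boundedness with tightness, Prokhorov's theorem gives a subsequence $\mu^{(a(N))}_{n}$ converging weakly, for every $n$, to the marginals of some $\mu(\beta)\in P^{s}(\Omega)$, and the limit still has $L^{p}$ marginals for all $p\geq 1$. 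Then I repeat the argument of Proposition \ref{pconv}: the representation $\mathcal{F}_{\beta}(\mu)=\int_{P_{\mathcal{E}}(\H)}\big(\mathcal{S}^{(1)}(\rho)+\tfrac{\beta}{2}\hat\rho^{\otimes2}(U)\big)\,\nu(d\rho|\mu)$ together with upper semicontinuity of the entropy and the trial-measure lower bound $\mathcal{F}^{(N)}(\beta)\geq N\big(\mathcal{S}^{(1)}(\rho)+\tfrac{\beta}{2}\rho^{\otimes2}(U)\big)$ forces $A_{\beta}=f(\beta)=\mathcal{F}_{\beta}(\mu(\beta))$, so the decomposition measure of $\mu(\beta)$ concentrates on maximizers of $\mathcal{F}^{(1)}_{\beta}$ over $P(\H)\cap L^{1}\log L$. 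Such a maximizer $\rho_{\beta}$ exists, and writing the Euler–Lagrange equation for $\mathcal{F}^{(1)}_{\beta}(\rho)=-\int_{\H}\rho\ln(\rho/\mu^{(1)})-\tfrac{\beta}{2}\int\int \rho(x)\ln|xy^{-1}|\rho(y)$ under the constraint $\int\rho=1$ yields, after absorbing the constant $\mathcal{M}^{(1)}$ into $Q e^{2K}$,
$$\rho_{\beta}(x)=\frac{Q(x)e^{K(x)-\beta\int_{\H}\ln|xy^{-1}|\rho_{\beta}(y)\,dy}}{\int_{\H}Q(x)e^{K(x)-\beta\int_{\H}\ln|xy^{-1}|\rho_{\beta}(y)\,dy}\,dx},$$
with $\rho_{\beta}\in L^{p}(\H)$ for all $p\geq1$ inherited from the uniform regularity of the marginals.

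The step I expect to be the main obstacle is showing that the energy and entropy functionals behave well enough in the non-compact setting for the concentration argument of Proposition \ref{pconv} to go through: one must rule out loss of mass at infinity both in the measures and in the energy integrals. This is precisely what assumptions $(a)$ and $(b)$ are designed to control — $(a)$ keeps $\int_{B_1(x)}Q(y)e^{2K(y)}|xy^{-1}|^{-q}dy$ small at infinity so the short-range singular part of the kernel is uniformly integrable, and $(b)$ bounds the long-range logarithmic growth via the weight $|x|^{s}$, which is exactly what Lemma \ref{lem2} exploits to bound $\mathcal{M}^{(N-1)}(\beta')/\mathcal{M}^{(N)}(\beta)$ and what the tightness Lemma uses. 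A secondary technical point is verifying that the decomposition-measure machinery of [HS] and the upper semicontinuity of $\mathcal{S}^{(n)}$ remain valid on $\Omega=\H^{\N}$, which is standard once tightness is known. Since all of these have been established in the preceding lemmata, what remains is essentially bookkeeping, and I would simply state that "the rest of the procedure of Section 3 can be carried out verbatim," as the paragraph preceding the theorem already indicates.
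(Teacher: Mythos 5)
Your proposal is correct and follows essentially the same route as the paper: the paper's own proof of this theorem consists precisely of the Section 4 lemmata you invoke (absolute continuity and $L^{p}$ integrability of $\mu^{(N)}$, the two-point and mixed logarithmic moment bounds of Lemmata \ref{lem1}--\ref{lem2}, the uniform bound $\frac{d\mu^{(N)}_{n}}{d\tau^{\otimes n}}\leq C(n,\beta)|R^{(n)}|^{-\beta/(N-1)}$, and tightness), followed by the observation that the Section 3 concentration and Euler--Lagrange argument then carries over unchanged. The only blemish, the mismatch between the reference measure $\tau(dx)=e^{2K(x)}Q(x)\,dx$ and the factor $e^{K(x)}$ in the fixed-point equation, is inherited from the paper itself and is not a gap in your argument.
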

Theorem \ref{thmh} and Corollary \ref{cor} are a direct corollary of the previous theorem.

\end{document}